\numberwithin{equation}{section}
\begin{document}

\title{Persistent homology and microlocal sheaf theory}
\author{Masaki Kashiwara and Pierre Schapira%
}

\maketitle

\begin{abstract}
We interpret some results of persistent homology and barcodes (in any dimension) with the language of microlocal sheaf theory. For that purpose we study the derived category of  sheaves on 
 a real  finite-dimensional  vector space $\BBV$. By using the operation of convolution, we introduce a pseudo-distance on this category and prove in particular  a stability result for direct images. Then we assume that 
 $\BBV$ is endowed with  a closed convex proper cone $\gamma$ with non empty interior and study $\gamma$-sheaves, that is, constructible sheaves  with microsupport contained in the antipodal to the polar cone (equivalently, constructible sheaves for the $\gamma$-topology). We prove that such sheaves may be approximated 
(for the pseudo-distance) by   ``piecewise linear'' $\gamma$-sheaves. 
Finally we show  that  these last sheaves are  constant 
on stratifications by $\gamma$-locally closed sets, an analogue of barcodes in higher dimension.  \end{abstract}
{\renewcommand{\thefootnote}{\mbox{}}
\footnote{Key words: microlocal sheaf theory, persistent homology, barcodes}
\footnote{MSC: 55N99, 18A99, 35A27}
\footnote{The research of M.K
was supported by Grant-in-Aid for Scientific Research (B)
15H03608, Japan Society for the Promotion of Science.}
\footnote{The  research of P.S was supported by the  ANR-15-CE40-0007 ``MICROLOCAL''.}
\addtocounter{footnote}{-2}
}

\tableofcontents

\section*{Introduction}

Persistent homology and barcodes are recent concrete applications of algebraic topology.
The aim of this paper is to show that many results of this theory are easily interpreted in the language of microlocal sheaf theory and that, in this formulation, one may extend the theory to higher dimension.

Although the theory is quite new, there is already a vast  literature on
persistent homology. See in particular the survey papers~\cites{EH08, Gri08, Ou15}.

We understand persistent homology as follows. One has a  
finite subset $S$ of an Euclidian space $X$
and one wants to understand its structure. For that purpose, one replaces each point $x\in S$ with a closed ball of center $x$ and radius $t$ and makes $t$ going to infinity. The union of these balls  gives 
a closed set $Z\subset X\times\R$, 
and one wants to understand 
how the homology of the union of the balls varies with $t$, which
is equivalent to understanding
the direct image of the constant sheaf  $\cor_Z$ associated with $Z$
by the projection $X\times\R\to\R$. 
{}From this point of view, we are not far from Morse theory for sheaves (see~\cites{GM88,KS90}).  Moreover, the sheaf one obtains has particular properties. It is constructible and is associated with a topology whose open sets are the intervals $]-\infty,a[$ with  $a\in[-\infty,+\infty]$. As we shall see, the category of such sheaves is equivalent to a category (that we shall define) of barcodes.  Note that the idea of using sheaf theory in this domain is not new and already appeared in the thesis of Justin Curry~\cite{Cu13}.

As described above,  persistent homology takes  its values on $\R$ and barcodes are defined on the ordered space $(\R,\leq)$. However,  the necessity of treating more than one parameter $t$ naturally  appears 
(see for {\em e.g.}~\cites{Le15, LW15}).

A higher dimensional generalization of the ordered space $(\R,\leq)$ is the data of a finite-dimensional  real vector space 
$\BBV$ and a closed convex proper cone $\gamma\subset\BBV$  with non-empty interior. 
We call here a $\gamma$-sheaf an object of the derived category of sheaves on $\BBV$ whose microsupport 
is contained in $\gammac$, the antipodal to the polar cone to $\gamma$. 
As we shall see, this category is equivalent to the derived category of sheaves on $\BBV_\gamma$, 
the space $\BBV$ endowed with the so-called $\gamma$-topology. 

The main goal of this paper is to describe constructible $\gamma$-sheaves on $\BBV$.
Constructible sheaves on  real analytic manifolds are now well-understood (the story began on complex manifolds with~\cite{Ka75}) but, as we shall see, 
$\gamma$-sheaves have a very specific behavior and are not so easy to treat. We shall mainly restrict ourselves to piecewise linear sheaves ($\PL$-sheaves for short), those which  are locally constant on a stratification associated with a locally finite family of hyperplanes.

\medskip
The main results of this paper may be described as follows.
\banum
\item
In  \S\,\ref{section:persi} we first recall and complete some results of~\cite{KS90}*{\S\,3.5} on the $\gamma$-topology,
 showing that the category of $\gamma$-sheaves is equivalent to the derived category of sheaves on $\BBV_\gamma$. 
Then  we recall, in our language, some basic results of persistent homology (that is, essentially Morse theory for sheaves) and construct the category of barcodes on $\R_\gamma$,  for $\gamma=\R_{\leq0}$.  By proving a variant of a result of Crawley-Boevey~\cite{CB14}*{Th.~1.1}, using  a  result of Guillermou~\cite{Gu16}*{Cor.~7.3} based on Gabriel's theorem on representations of quivers, we prove that our category of barcodes is  equivalent to that of constructible $\gamma$-sheaves.
 \item
In \S\,\ref{section:dist}, we define a  kind of pseudo-distance  $\dist$ on sheaves on $\BBV$, after having endowed $\BBV$ with a norm $\vvert\cdot\vvert$. The main tool for this construction is the convolution of sheaves and the main difference between our distance and the other classical ones, is that our distance  is constructed in the derived setting. For example, two sheaves concentrated in different degrees may have a finite distance.
We prove a stability result for direct images, namely, given two continuous maps $f_1,f_2\cl X\to\BBV$ and a sheaf $F$ on $X$, then $\dist(\roim{f_1}F,\roim{f_2}F)\leq\vvert f_1-f_2\vvert$. 

Next,  we introduce  the notion of \PL-sheaves
and prove that any constructible sheaf  may be  approximated  by  a $\PL$-sheaf as well as  a similar result for 
$\PL$-$\gamma$-sheaves (now, $\gamma$ is polyhedral).

Finally, we propose a notion of barcodes in higher dimension and construct the category of such barecodes. Unfortunately, although   this category is naturally embedded  into that of $\PL$-sheaves, we show on examples that this embedding is not an equivalence.

\item
In  \S\,\ref{section:PL}, we first  study with some details the $\gamma$-topology on $\BBV$ and constructible $\gamma$-sheaves. For example, we show that if $F$ is such a sheaf, then for any $x\in\BBV$, $F$ is locally constant on $x+\gamma^a$ in a neighborhood of $x$. 
 
The main result of this section is that  given a $\PL$-$\gamma$-sheaf, there exists a stratification of 
$\BBV$ by $\gamma$-locally closed polytopes on which the sheaf is constant. 
\eanum

\vspace{1.ex}\noindent
{\bf Acknowledgments}\\
The second named author warmly thanks  Gregory Ginot for having organized a seminar  on  persistent homology, at the origin of this paper,  and Beno\^it Jubin for fruitful discussions on this subject. In this seminar,  Nicolas Berkouk and  Steve Oudot pointed out the problem of approximating 
constructible sheaves with objects which would be similar to higher dimensional barcodes, what we do, in some sense,  here. 
Moreover, the links between $\gamma$-sheaves and persistent modules (Proposition~\ref{pro:be}) were clarified during discussions with Nicolas Berkouk.

\section{Persistent homology}\label{section:persi}

\subsection{Sheaves}
The aim of this subsection is simply to fix a few notations.
\begin{itemize}
\item
Throughout the paper, $\cor$ denotes a field.
We denote by $\md[\cor]$ the abelian category of $\cor$-vector spaces. 
\item
For an abelian category $\shc$, we denote by $\Derb(\shc)$ its bounded derived category. However, 
we write $\Derb(\cor)$ instead of $\Derb(\md[\cor])$.
\item
If $\pi\cl E\to M$ is a vector bundle over $M$, we identify $M$ with the zero-section of $E$ and we set
$\sdot{E\;}\eqdot E\setminus M$. We denote by $\sdot\pi\cl \sdot{E\;}\to M$ the restriction of $\pi$ to $\sdot{E\,}$. 
\item
For a vector bundle $E\to M$, we denote by $a\cl E\to E$ the antipodal map, $a(x,y)=(x,-y)$.
For a subset $Z\subset E$, we simply denote by $Z^a$ its image by the antipodal map.
In particular, 
for a cone $\gamma$ in $E$, we denote by $\gamma^a=-\gamma$ the opposite cone. For such a cone, we denote by $\gamma^\circ$ the polar cone (or dual cone) in the dual vector bundle $E^*$:
\eq\label{eq:polar}
&&\gamma^\circ=\{(x;\xi)\in E^*;\langle\xi,v\rangle\geq0\mbox{ for all }v\in\gamma_x\}.
\eneq
\item
Let $M$ be a real manifold $M$ of dimension $\dim M$. We shall use freely the classical notions of microlocal sheaf theory, referring to~\cite{KS90}.
We denote by $\md[\cor_M]$ the abelian category of sheaves of $\cor$-modules on $M$ and by $
\Derb(\cor_M)$ its bounded derived category.
For short, an object of $\Derb(\cor_M)$ is called a ``sheaf'' on $M$.
\item
For a locally  closed subset $Z\subset M$, one denotes by $\cor_Z$ the constant sheaf with stalk $\cor$ on $Z$ extended by $0$ on $M \setminus Z$. One defines similarly the sheaf  $L_Z$ for $L\in\Derb(\cor)$.
\item 
We denote by $\ori_M$ the orientation sheaf on $M$ and by $\omega_M$ the dualizing complex on $M$. Recall that 
$\omega_M\simeq\ori_M\,[\dim M]$. One shall use the duality functors 
\eq\label{eq:dualfct}
&& \RD'_M(\scbul)=\rhom(\scbul,\cor_M),\quad  \RD_M(\scbul)=\rhom(\scbul,\omega_M). 
\eneq
\item
For $F\in\Derb(\cor_M)$ we denote by $\musupp(F)$\footnote{$\musupp(F)$ was denoted by $\SSi(F)$ in  \cite{KS90}.} its microsupport, a closed conic co-isotropic subset of $T^*M$.
\item
For $F\in\Derb(\cor_M)$, one denotes by $\Sing(F)$ the singular locus of $F$, that is, 
the complement of the  largest open  subset on which $F$ is locally constant. 
\end{itemize}

\subsubsection*{Constructible sheaves}
We refer the reader to \cite{KS90} for terminologies not explained here. 
\begin{definition}
Let $M$ be a real analytic manifold and let $F \in \md[\cor_M]$.
One says that $F$ is weakly $\R$-constructible if there exists a subanalytic stratification $M=\bigsqcup_\alpha M_\alpha$ such that for each  stratum $M_\alpha$, the restriction $F\vert_{M_\alpha}$ is locally constant.
If moreover, the stalk $F_x$ is of finite rank for all $x\in M$, then one says that $F$ is $\R$-constructible.
\end{definition}
\begin{notation}
(i) One denotes by $\mdrc[\cor_M]$ the abelian category of $\R$-constructible sheaves and by $\mdrcc[\cor_M]$ the full subcategory of $\mdrc[\cor_M]$ consisting of sheaves with compact support. Both are thick abelian subcategories of $\md[\cor_M]$.  

\spa
(ii) One denotes by $\Derb_{\Rc}(\cor_{M})$ the full triangulated subcategory of $\Derb(\cor_{M})$ consisting of sheaves with $\R$-constructible cohomology  and by  $\Derb_{\rcc}(\cor_{M})$ the full triangulated subcategory of $\Derb_\Rc(\cor_{M})$ consisting of sheaves with compact support.
\end{notation}
A theorem of~\cite{Ka84} (see also~\cite{KS90}*{Th.~8.4.5}) asserts that the natural functor $\Derb(\mdrc[\cor_M])\to\Derb_\Rc(\cor_{M})$ is an equivalence of categories. 

When $F\in \Derb_\Rc(\cor_{M})$, $\musupp(F)$ and $\Sing(F)$ are subanalytic. The first result is proved in loc.\ cit.\ and the second one follows from $\Sing(F)=\sdot{\pi}(\musupp(F)\cap\dTM)$.
(Recall that $\sdot\pi$ is the projection $\dTM\seteq
T^*M\setminus M\to M$.)

\subsection{$\gamma$-topology}\label{subsection:gtop}
The so-called $\gamma$-topology has been studied with some details in~\cite{KS90}*{\S\,3.4}.

Let $\BBV$ be a  finite-dimensional  real vector space.
We denote by $s$ the addition map.
\eqn
&&s\cl \BBV\times\BBV\to\BBV,\quad (x,y)\mapsto x+y,
\eneqn
and by $a\cl \BBV\to\BBV$ the antipodal map $x\mapsto -x$.

Hence,  for two subsets $A,B$ of $\BBV$, one has $A+B=s(A\times B)$. A subset $A$ of $V$ is called a cone if $0\in A$ and $\R_{>0}A\subset A$. A convex cone  $A$ is proper
if $A\cap A^a=\{0\}$.

Throughout  the paper, we consider a cone $\gamma\subset\BBV$ 
 and we assume:
 \eq\label{hyp1}
&&\parbox{75ex}{\em{
$\gamma$ is  closed proper convex with non-empty interior. }
}\eneq
Sometimes we shall make the extra assumption that $\gamma$ is {\em polyhedral}, meaning that it is a finite intersection of closed  half-spaces.

We say that a subset $A$ of $\BBV$ is {\em $\gamma$-invariant} if $A+\gamma=A$.
Note that a subset $A$ is $\gamma$-invariant if and only if
$\BBV\setminus A$ is $\gamma^a$-invariant.

The family of $\gamma$-invariant open subsets of $\BBV$ defines a topology, which is called  the \define{$\gamma$-topology}\footnote{See Subsection~\ref{subsection:alex} for related constructions.}  on $\BBV$.
One denotes by $\BBV_\gamma$ the space $\BBV$ endowed with the $\gamma$-topology and one denotes by
\begin{equation}
\phig \cl\BBV \to \BBV_\gamma
\end{equation}
the continuous map associated with the identity.
Note that the closed sets for this topology are the 
$\gamma^a$-invariant closed subsets of $\BBV$. 
 
\begin{definition}\label{def:loclo}
Let $A$ be a subset of $\BBV$. 
\banum
\item
One says that $A$ is {\em$\gamma$-open} \lp resp.\ {\em $\gamma$-closed}\rp, if $A$ is open \lp resp.\  closed\rp\,  for the $\gamma$-topology. 
\item
One says that $A$ is {\em $\gamma$-locally closed} if $A$ is the intersection of  a $\gamma$-open subset and a $\gamma$-closed  subset.

\item
One says that $A$ is {\em $\gamma$-flat} if $A=(A+\gamma)\cap(A+\gamma^a)$.
\item 
One says that a closed set $A$ is {\em $\gamma$-proper} if the map $s$ is proper on $A\times\gamma^a$.
\eanum
\end{definition}

\begin{remark}
(i) A closed subset $A$ is $\gamma$-proper if and only if
$A\cap(x+\gamma)$ is compact for any $x\in\BBV$. 

\spa
(ii) Let $A$ be a subset of $\BBV$ and assume that  $\ol{A}$ is $\gamma$-proper.
 Then $\ol{A+\gamma^a}=\ol{A}+\gamma^a$.

\spa
(iii) If $A$ is closed and if there exist a closed convex proper cone 
$\gamma_1$ with $\gamma\subset\Int(\gamma_1)\cup\{0\}$ and $x\in\BBV$ such that 
$A\cap(x+\gamma_1)=\varnothing$, then $A$ is $\gamma$-proper.

\spa
(iv) One has $\Int(\gamma)=\Int(\gamma)+\gamma$ and
$\ol{\Int(\gamma)}=\gamma$.
\end{remark}

We shall use the notations:
\eq\label{not:1}
&&\left\{\begin{array}{l}
\Derb_{\gammac}(\cor_{\BBV})\eqdot\{F\in\Derb(\cor_{\BBV}) ;\musupp(F)\subset \BBV\times\gammac\},\\[1ex]
\Derb_{\rcg}(\cor_{\BBV})\eqdot\Derb_{\Rc}(\cor_{\BBV})\cap\Derb_{\gammac}(\cor_{\BBV}),\\[1ex]
\mdg[\cor_{\BBV}]\eqdot\md[\cor_{\BBV}]\cap \Derb_{\gammac}(\cor_\BBV),\\[1ex]
\mdrcg[\cor_{\BBV}]\eqdot\mdrc[\cor_\BBV]\cap\mdg[\cor_{\BBV}].
\end{array}\right.
\eneq
We call an object of $\Derb_{\gammac}(\cor_{\BBV})$ \define{a $\gamma$-sheaf}. 

It follows from~\cite{KS90}*{Prop.~5.4.14} that for $F,G\in\Derb_{\gammac}(\cor_{\BBV})$ and $H\in\Derb_{\gamma^\circ}(\cor_{\BBV})$, the sheaves $F\tens G$ and $\rhom(H,F)$ belong to $\Derb_{\gammac}(\cor_{\BBV})$.

The next result is implicitly proved in~\cite{KS90}, without assuming that $\Int(\gamma)$ is non empty.

\begin{theorem}\label{th:eqvderbg}
Let $\gamma$ be a closed convex proper cone in $\BBV$.
The  functor $\roim{\phig}\cl \Derb_{\gammac}(\cor_\BBV)\to\Derb(\cor_{\BBV_\gamma})$ is an equivalence of triangulated categories with quasi-inverse $\opb{\phig}$.
\end{theorem}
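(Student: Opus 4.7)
The plan is to show that the adjoint pair $(\opb{\phig}, \roim{\phig})$---which exists because $\phig$ is continuous---restricts to an equivalence between $\Derb(\cor_{\BBV_\gamma})$ and $\Derb_{\gammac}(\cor_\BBV)$. The approach has four parts: a stalk computation yielding conservativity of $\opb{\phig}$; a microsupport bound on the image of $\opb{\phig}$; the counit being an isomorphism on $\Derb_{\gammac}$; and the unit being an isomorphism via an adjunction triangle identity.

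The starting geometric observation is that for each $x \in \BBV$, the family of open cones $\{U_v \eqdot x - v + \Int(\gamma)\}_{v \in \Int(\gamma)}$ forms a cofinal system of $\gamma$-invariant open neighborhoods of $x$, ordered by $U_{v_1} \subset U_{v_2}$ iff $v_2 - v_1 \in \gamma$. Since stalks of sheaves on both $\BBV$ and on $\BBV_\gamma$ at $x$ are colimits of sections over such neighborhoods, one gets
\[
(\opb{\phig} G)_x \,\simeq\, G_x \,\simeq\, \varinjlim_{v \in \Int(\gamma)} G(U_v)
\]
for any $G \in \Derb(\cor_{\BBV_\gamma})$. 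In particular, $\opb{\phig}$ is conservative. Next, one argues $\opb{\phig} G \in \Derb_{\gammac}$: the sheaf $\opb{\phig} G$ is ``translation-invariant along $\gamma$'' because every $\gamma$-invariant open neighborhood of $x$ is also a neighborhood of $x + w$ for $w \in \gamma$. Using the criterion for microsupport through vanishing of $R\Gamma_{\{\phi \geq \phi(x_0)\}}$, whenever $\xi_0 \notin \gammac$ one picks $v \in \gamma$ with $\langle \xi_0, v\rangle > 0$ and a test function $\phi$ with $d\phi(x_0) = \xi_0$ to produce the required vanishing, giving $\musupp(\opb{\phig} G) \subset \BBV \times \gammac$.

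The main step, and the chief obstacle, is to show the counit $\epsilon_F \colon \opb{\phig}\roim{\phig} F \to F$ is an isomorphism for $F \in \Derb_{\gammac}$. By the stalk formula and the definition of $\roim{\phig}$,
\[
(\opb{\phig}\roim{\phig} F)_x \,\simeq\, \varinjlim_{v \in \Int(\gamma)} R\Gamma(U_v; F).
\]
As $v$ ranges over $\Int(\gamma)$ in the direction making $U_v$ decrease, the $U_v$ form a fundamental system of open neighborhoods of the closed cone $x + \gamma$, so the colimit computes $R\Gamma(x + \gamma; F\vert_{x+\gamma})$. The hypothesis $\musupp(F) \subset \BBV \times \gammac$ then implies, via the microlocal propagation theorem \cite{KS90}*{Prop.~5.2.1}, that this last quantity is naturally isomorphic to the stalk $F_x$: the conormal directions to translates of $\partial\gamma$ lie precisely in $\gammac$, so the microsupport bound makes the contraction of $x+\gamma$ onto its apex cohomologically trivial for $F$. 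Correctly matching the propagation direction with the convex geometry of $\gamma$---and justifying the passage from the colimit over open $U_v$ to sections on the closed limit $x+\gamma$---is the delicate point.

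The adjunction triangle identity $\epsilon_{\opb{\phig} G} \circ \opb{\phig}(\eta_G) = \id_{\opb{\phig} G}$, combined with the third step applied to $F = \opb{\phig} G$ (which lies in $\Derb_{\gammac}$ by step two), shows that $\opb{\phig}(\eta_G)$ is an isomorphism for every $G \in \Derb(\cor_{\BBV_\gamma})$. Conservativity of $\opb{\phig}$ from the first step then upgrades this to the unit $\eta_G$ itself being an isomorphism, yielding the claimed equivalence.
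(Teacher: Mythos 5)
Your overall scaffolding is sound, and step 4 is genuinely different from the paper's route: you deduce the unit isomorphism $\eta_G$ formally from the triangle identity $\epsilon_{\opb{\phig}G}\circ\opb{\phig}(\eta_G)=\id$, the counit isomorphism, and conservativity of $\opb{\phig}$, whereas the paper cites \cite{KS90}*{Prop.~3.5.3~(iii)} separately for that direction. That is a clean formal observation.

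However, there is a genuine gap in step 3, which is where all the real content of the theorem lives. Your assertion that the sets $U_v=x-v+\Int(\gamma)$, $v\in\Int(\gamma)$, form a fundamental system of open neighborhoods of the closed cone $x+\gamma$ is false: $\gamma$ is unbounded, and an open set $W\supset x+\gamma$ can ``pinch'' around the cone at infinity so that no translate $x-v+\Int(\gamma)$ fits inside it. (Take, e.g., $\gamma=\set{(s,t)}{t\geq|s|}$ in $\R^2$, $x=0$, and $W=\set{(s,t)}{t>s^2-1}$: then $\gamma\subset W$, but $-v+\Int(\gamma)\not\subset W$ for any $v\in\Int(\gamma)$.) Consequently the identification $\varinjlim_v R\Gamma(U_v;F)\simeq R\Gamma(x+\gamma;F\vert_{x+\gamma})$ does not follow from general topology, and the subsequent invocation of a ``microlocal propagation theorem'' is left too vague to carry the argument. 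The actual proof---the content of \cite{KS90}*{Prop.~5.2.3}, which the paper simply cites---establishes $\varinjlim_v R\Gamma(U_v;F)\isoto F_x$ directly by a non-characteristic deformation (Morse-theoretic) argument applied to the shrinking family $U_v$, without ever passing through sections on the closed cone. You flag this as ``the delicate point,'' but flagging a gap does not fill it; as written, step 3 is incomplete, and with it the whole counit isomorphism. Step 2, while plausible in outline, is likewise only a sketch of what \cite{KS90}*{Prop.~3.5.3, 5.2.3} actually prove.
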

\begin{proof}
(i) The proof of~\cite{KS90}*{Prop.~5.2.3} shows that $\opb{\phig}\cl \Derb(\cor_\Vg)\to \Derb_{\gammac}(\cor_\BBV)$ is well defined. The same statement asserts that for $F\in\Derb_{\gammac}(\cor_\BBV)$, one has 
$\opb{\phig}\roim{\phig}F\isoto F$.

\spa
(ii) By~\cite{KS90}*{Prop.~3.5.3~(iii)}, for $F\in\Derb(\cor_\Vg)$, one has 
$F\isoto \roim{\phig}\opb{\phig}F$.
\end{proof}

\begin{corollary}\label{cor:eqvmdg}
The  functor $\oim{\phig}\cl \mdg[\cor_\BBV]\to\md[\cor_{\Vg}]$ is an equivalence of abelian categories with quasi-inverse $\opb{\phig}$. 
\end{corollary}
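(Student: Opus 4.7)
The plan is to deduce the corollary directly from Theorem~\ref{th:eqvderbg} by showing that, on the abelian subcategories in the statement, the relevant derived functors reduce to their underived versions.

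I would first observe that $\opb{\phig}$ is exact (it is the inverse image along a continuous map) and hence restricts to an exact functor $\md[\cor_\Vg]\to\md[\cor_\BBV]$; its essential image lies in $\mdg[\cor_\BBV]$ because the derived version does, by Theorem~\ref{th:eqvderbg}. Moreover, since $\opb{\phig}$ is an equivalence of derived categories it is in particular fully faithful, hence conservative at the level of hearts: if $H\in\md[\cor_\Vg]$ satisfies $\opb{\phig}H=0$, then $H\simeq0$ in $\Derb(\cor_\Vg)$ and therefore $H=0$ in $\md[\cor_\Vg]$.

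The main step, which I expect to be the only non-formal point, is to check that $\roim{\phig}G$ is concentrated in degree $0$ for every $G\in\mdg[\cor_\BBV]$. Applying the exact functor $\opb{\phig}$ to the cohomology sheaves $R^k\oim{\phig}G$ and invoking the counit isomorphism $\opb{\phig}\roim{\phig}G\isoto G$ of Theorem~\ref{th:eqvderbg}, one obtains
\eqn
\opb{\phig}\bigl(R^k\oim{\phig}G\bigr)\simeq H^k\bigl(\opb{\phig}\roim{\phig}G\bigr)\simeq H^k(G),
\eneqn
which vanishes for $k\neq0$. Conservativity then forces $R^k\oim{\phig}G=0$ for $k\neq0$, so that $\roim{\phig}G\simeq\oim{\phig}G$.

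From there the conclusion is routine. The derived counit specialises to an abelian isomorphism $\opb{\phig}\oim{\phig}G\isoto G$ on $\mdg[\cor_\BBV]$; and for the unit, given $F\in\md[\cor_\Vg]$, the object $\opb{\phig}F$ lies in $\mdg[\cor_\BBV]$, so the previous step shows $\roim{\phig}\opb{\phig}F$ is concentrated in degree $0$ and the derived unit becomes the abelian isomorphism $F\isoto\oim{\phig}\opb{\phig}F$. The only non-trivial ingredient is the degree-$0$ concentration of $\roim{\phig}G$; everything else is formal bookkeeping translating the derived equivalence to hearts.
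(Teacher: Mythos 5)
Your proof is correct and follows essentially the same route as the paper: both deduce the statement from Theorem~\ref{th:eqvderbg} using the exactness of $\opb{\phig}$ and its compatibility with taking cohomology of the unit and counit isomorphisms. The only genuine difference is that you additionally establish $R^k\oim{\phig}G\simeq0$ for $k\neq0$ and $G\in\mdg[\cor_\BBV]$ via conservativity of $\opb{\phig}$, whereas the paper sidesteps this by simply taking $H^0$ of the counit $\opb{\phig}\roim{\phig}G\isoto G$ (which is the $k=0$ case of your identity $\opb{\phig}\bigl(R^k\oim{\phig}G\bigr)\simeq H^k(G)$), so your extra step is a valid by-product but not needed for the corollary.
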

\begin{proof}
(i)  By Theorem~\ref{th:eqvderbg}, the functor $\opb{\phig}\cl \md[\cor_\Vg]\to \md[\cor_\BBV]$ takes its values in $\mdg[\cor_\BBV]$. Moreover, the same statement asserts that for $F\in\md[\cor_\Vg]$, $F\isoto \roim{\phig}\opb{\phig}F$. Therefore,  $F\isoto \oim{\phig}\opb{\phig}F$. 

\spa
(ii)  By taking the $0$-th cohomology of the isomorphism $\opb{\phig}\roim{\phig}F\isoto F$ and using the fact that $\opb{\phig}$ commutes with $H^0$, we get the isomorphism $\opb{\phig}\oim{\phig}F\isoto F$.
\end{proof}

\begin{corollary}\label{cor:eqvderbg}
The equivalence of categories in {\rm Theorem~\ref{th:eqvderbg}} preserves the natural $t$-structures of both categories. In particular, 
for $F\in \Derb(\cor_\BBV)$, the condition
$F\in \Derb_{\gammac}(\cor_\BBV)$ is equivalent to the condition:
$\musupp(H^j(F))\subset \gamma^{\circ a}$  for any $j\in\Z$.
\end{corollary}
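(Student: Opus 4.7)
The plan is to derive both parts of the corollary from the exactness of $\opb{\phig}$: since $\phig$ is a continuous map of topological spaces, the inverse image functor on sheaves is exact, so for every $G\in\Derb(\cor_\Vg)$ and $j\in\Z$ one has $H^j(\opb{\phig}G)\simeq\opb{\phig}(H^jG)$.

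Using this, the forward implication of the ``in particular'' clause runs as follows. Given $F\in\Derb_{\gammac}(\cor_\BBV)$, set $G\eqdot\roim{\phig}F$. Theorem~\ref{th:eqvderbg} gives $F\isoto\opb{\phig}G$, whence $H^j(F)\simeq\opb{\phig}(H^jG)$. Since $H^jG\in\md[\cor_\Vg]$, Corollary~\ref{cor:eqvmdg} ensures $\opb{\phig}(H^jG)\in\mdg[\cor_\BBV]$, that is, $\musupp(H^j(F))\subset\gammac$.

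For the converse, the strategy is induction on the cohomological amplitude of $F$ using the truncation distinguished triangles $\tau^{<n}F\to\tau^{\leq n}F\to H^n(F)[-n]\xrightarrow{+1}$, together with the standard bound $\musupp(F_2)\subset\musupp(F_1)\cup\musupp(F_3)$ for any distinguished triangle $F_1\to F_2\to F_3\xrightarrow{+1}$ (\cite{KS90}*{Prop.~5.1.3}). Under the hypothesis $\musupp(H^j(F))\subset\gammac$ for every $j$, the induction yields $\musupp(F)\subset\BBV\times\gammac$, so $F\in\Derb_{\gammac}(\cor_\BBV)$.

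The preservation of $t$-structures is then a formal consequence. The two implications just proved show that truncating any $F\in\Derb_{\gammac}(\cor_\BBV)$ with respect to the natural $t$-structure of $\Derb(\cor_\BBV)$ keeps the result inside $\Derb_{\gammac}(\cor_\BBV)$, so that $t$-structure restricts. Since $\opb{\phig}$ is $t$-exact and is a quasi-inverse of $\roim{\phig}$, the equivalence $\roim{\phig}$ is $t$-exact as well, and the two $t$-structures correspond. Nothing here is genuinely deep; the only point requiring thought is to identify the natural $t$-structure on $\Derb_{\gammac}(\cor_\BBV)$ with the restriction of the ambient one, which is exactly what the microsupport characterization of the $H^j(F)$ makes precise.
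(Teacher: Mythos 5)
Your proposal is correct and is essentially the paper's own argument in expanded form: the authors deduce the corollary in one line from Corollary~\ref{cor:eqvmdg}, and the ingredients you make explicit (exactness of $\opb{\phig}$, hence $H^j(\opb{\phig}G)\simeq\opb{\phig}H^jG$ applied to $G=\roim{\phig}F$, plus the microsupport triangle inequality to bound $\musupp(F)$ by $\bigcup_j\musupp(H^j(F))$) are exactly what that deduction tacitly uses. Your remarks on restricting the $t$-structure and on $t$-exactness of the quasi-inverse are the correct formal completion of the same route.
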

\begin{proof}
This follows from Corollary~\ref{cor:eqvmdg}.
\end{proof}

\begin{corollary}\label{cor:ssA}
Let $A$ be a $\gamma$-locally closed subset of \/ $\BBV$.
Then $\musupp(\cor_A)\subset \BBV\times\gammac$. 
\end{corollary}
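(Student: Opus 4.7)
The plan is to reduce the claim to the two extreme cases ($A$ being $\gamma$-open and $A$ being $\gamma$-closed) and combine them using tensor product. By Definition~\ref{def:loclo}~(b) I would write $A = U \cap Z$ with $U$ $\gamma$-open and $Z$ $\gamma$-closed, so that $\cor_A \simeq \cor_U \tens \cor_Z$ (computed stalk by stalk). Since $\Derb_{\gammac}(\cor_\BBV)$ is stable under tensor product (\cite{KS90}*{Prop.~5.4.14}, recalled just before Theorem~\ref{th:eqvderbg}), it is enough to verify the microsupport bound for $\cor_U$ and for $\cor_Z$ separately.

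For $U$ a $\gamma$-open set, $U$ is an open subset of $\BBV_\gamma$, so $\cor_U$ makes sense as an object of $\md[\cor_{\BBV_\gamma}]$. Because $\phig$ is the identity map on underlying sets, a stalk-level computation gives $\opb{\phig}\cor_U \simeq \cor_U$ in $\md[\cor_\BBV]$, and Theorem~\ref{th:eqvderbg} then yields $\cor_U \in \Derb_{\gammac}(\cor_\BBV)$. For $Z$ a $\gamma$-closed set I would set $V \seteq \BBV \setminus Z$, which is $\gamma$-open, and invoke the distinguished triangle
\[
\cor_V \to \cor_\BBV \to \cor_Z \to \cor_V[1].
\]
The first term lies in $\Derb_{\gammac}(\cor_\BBV)$ by the $\gamma$-open case just handled; $\cor_\BBV$ has microsupport equal to the zero section, which is contained in $\BBV \times \gammac$ because $0 \in \gammac$. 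As $\Derb_{\gammac}(\cor_\BBV)$ is a triangulated subcategory of $\Derb(\cor_\BBV)$, $\cor_Z$ lies in it as well.

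I do not anticipate any serious obstacle. The only non-automatic point is the stalk identification $\opb{\phig}\cor_U \simeq \cor_U$ for $U$ $\gamma$-open, which is essentially tautological since $\phig$ is the identity on underlying sets and $U$ is simultaneously open in $\BBV$ and in $\BBV_\gamma$.
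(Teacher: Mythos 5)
Your proof is correct, but it is more roundabout than necessary, and the extra machinery it invokes is not actually needed. The key observation you make for $U$ --- that $\phig$ is the identity on underlying sets, so the pullback $\opb{\phig}$ of a constant sheaf on a subset of $\BBV_\gamma$ is the constant sheaf on that same subset in $\BBV$ --- applies verbatim to \emph{any} locally closed subset of $\BBV_\gamma$, in particular to $A$ itself. Since ``$\gamma$-locally closed'' means precisely ``locally closed in $\BBV_\gamma$'', the constant sheaf $\cor_{A,\gamma}$ already makes sense in $\md[\cor_{\BBV_\gamma}]$ and $\cor_A \simeq \opb{\phig}\cor_{A,\gamma}$, after which a single application of Theorem~\ref{th:eqvderbg} finishes the proof (this is the paper's argument). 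Your decomposition $A = U \cap Z$, the distinguished triangle $\cor_V \to \cor_\BBV \to \cor_Z \to$, and the appeal to stability of $\Derb_{\gammac}(\cor_\BBV)$ under $\tens$ (Prop.~5.4.14 of \cite{KS90}) are all correct steps, but they re-derive a special case of a fact you already have in hand in full generality. Worth internalizing: Theorem~\ref{th:eqvderbg} says $\opb{\phig}$ lands in $\Derb_{\gammac}(\cor_\BBV)$ with no hypothesis on the input, so once you have identified $\cor_A$ as a pullback along $\phig$ you are done --- there is no need to first verify the microsupport bound on building blocks.
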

\begin{proof}
The subset $A$ is locally closed in $V_\gamma$.
Let us denote by $\cor_{A,\gamma}\in\Mod(\cor_{V_\gamma})$ the constant sheaf 
supported on $A$.
Then
we have
$\cor_A\simeq \opb{\phig}(\cor_{A,\gamma})$. 
\end{proof}

\begin{remark}
Thanks to Theorem~\ref{th:eqvderbg}, the reader may ignore microlocal sheaf theory, at least in a first reading. Indeed, if this theory plays a central role in the proofs of the statements, it does not  appear in the statements, after replacing $\Derb_{\gammac}(\cor_{\BBV})$ with 
$\Derb(\cor_{\BBV_\gamma})$.
\end{remark}

\subsection{Persistent homology}\label{subsection:persi}

Let $\BBV$ be a real finite-dimensional vector space
and let $\gamma$ be a  cone satisfying hypothesis~\eqref{hyp1}. We also  assume that $\gamma$ is subanalytic\footnote{In practice the cone $\gamma$ will be polyhedral.}.

Let $M$ be a real analytic manifold that 
 and let $f\cl M\to\BBV$ be a continuous subanalytic map. 
We denote by $\Gamma^+_f\subset M\times\BBV$ the $\gamma$-epigraph of $f$.
\eqn
\Gamma^+_f&=&\{(x,y)\in M\times\BBV; f(x)-y\in\gamma\}\\
&=&\Gamma_f+\gamma^a.
\eneqn
We denote by $p\cl M\times\BBV\to \BBV$ the projection. 

\begin{lemma}\label{le:persihom2}
One has $\musupp(\cor_{\Gamma^+_f})\subset T^*M\times(\BBV\times\gammac)$. 
\end{lemma}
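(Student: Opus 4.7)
The plan is to exhibit $\cor_{\Gamma^+_f}$ as the inverse image of a sheaf that lives naturally on the product space $M\times\BBV_\gamma$, and then appeal to the microlocal input that already powers Theorem~\ref{th:eqvderbg} and Corollary~\ref{cor:ssA}, in its (obvious) product-space version.

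First, I would check that $\Gamma^+_f$ is $\gamma^a$-invariant in the $\BBV$-factor. Since $\Gamma^+_f=\Gamma_f+(\{0\}\times\gamma^a)$ and $\gamma^a+\gamma^a=\gamma^a$ (a convex cone containing $0$), one has $\Gamma^+_f+(\{0\}\times\gamma^a)=\Gamma^+_f$. Equivalently the complement $W=\{(x,y)\in M\times\BBV\mid f(x)-y\notin\gamma\}$ is $\gamma$-invariant in the $y$-variable. Moreover $W$ is open for the usual topology on $M\times\BBV$ because $f$ is continuous and $\gamma$ is closed.

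Next, I would promote this to a topological statement: $\Gamma^+_f$ is closed in $M\times\BBV_\gamma$ (endowed with the product of the usual topology on $M$ and the $\gamma$-topology on $\BBV$). Given $(x_0,y_0)\in W$, pick a usual-open rectangle $U\times V\subset W$; then $V+\gamma$ is open and $\gamma$-invariant, hence $\gamma$-open, and contains $y_0$, and the invariance of $W$ in $y$ gives $U\times(V+\gamma)\subset W$, so $W$ is open in $M\times\BBV_\gamma$. Setting $\psi=\mathrm{id}_M\times\phig\cl M\times\BBV\to M\times\BBV_\gamma$, there is a well-defined constant sheaf $G$ on $\Gamma^+_f\subset M\times\BBV_\gamma$ extended by zero, and $\cor_{\Gamma^+_f}\simeq \opb{\psi}G$. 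The proof then finishes by the product-space analog of \cite{KS90}*{Prop.~5.2.3}: for any $G\in\Derb(\cor_{M\times\BBV_\gamma})$, $\musupp(\opb{\psi}G)\subset T^*M\times(\BBV\times\gammac)$.

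The main obstacle is precisely this last step. The underlying microlocal input, namely that $\opb{\phig}$ lands in $\Derb_{\gammac}(\cor_\BBV)$, is proved in loc.\ cit.\ by exhibiting a cofinal family of $\gamma$-invariant enlargements of usual neighborhoods, which is exactly what the argument of Step~2 provides in the product setting; so the same argument transfers verbatim to $\psi$. One could also bypass invoking a new version of the proposition by reducing directly to Corollary~\ref{cor:ssA} via a local product decomposition around a point of $M\times\BBV$.
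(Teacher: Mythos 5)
Your argument is correct and is essentially the paper's own proof: the paper simply observes that $\Gamma^+_f$ is $\gamma$-closed (in your sense, i.e.\ closed in $M\times\BBV_\gamma$) and invokes Corollary~\ref{cor:ssA} in its evident product form, which is exactly what you spell out by writing $\cor_{\Gamma^+_f}\simeq\opb{\psi}G$ with $\psi=\mathrm{id}_M\times\phig$. Note moreover that the microlocal input you worry about needs no new proof, since \cite{KS90}*{Prop.~5.2.3} (and \S\,3.5 there) is already stated in the relative setting $M\times\BBV$ over a base manifold, so the estimate $\musupp(\opb{\psi}G)\subset T^*M\times(\BBV\times\gammac)$ is literally available.
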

\begin{proof}
The set $\Gamma^+_f$ being $\gamma$-closed, the result follows from Corollary~\ref{cor:ssA}.
\end{proof}

\begin{theorem}\label{th:persihom}
Let  $M$ be a real analytic manifold and let $f\cl M\to\BBV$ be a continuous subanalytic map. 
Assume that
\eq\label{hyp:persi1}
\parbox{70ex}{
for each $K\subset\BBV$ compact, the set $\{x\in M;f(x)\in K+\gamma\}$ is compact.
}\eneq
 Then $\roim{p}\cor_{\Gamma^+_f}$ belongs to $\Derb_{\rcg}(\cor_\BBV)$.
\end{theorem}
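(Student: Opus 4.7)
Since $\Derb_{\rcg}(\cor_\BBV)=\Derb_{\Rc}(\cor_\BBV)\cap\Derb_{\gammac}(\cor_\BBV)$, the statement splits into an $\R$-constructibility assertion and a microsupport assertion. My plan is to deduce both from the corresponding properties of $\cor_{\Gamma^+_f}$ by applying, respectively, the proper direct image theorem for $\R$-constructible sheaves and the standard microsupport estimate for $\roim{p}$, once I verify that $p$ is proper on $\Gamma^+_f$. The properness check reduces directly to \eqref{hyp:persi1}: for $K\subset\BBV$ compact,
\[
\opb{p}(K)\cap\Gamma^+_f=\{(x,y): y\in K,\ f(x)-y\in\gamma\}
\]
projects into $K$ on the $\BBV$-factor and into $\{x\in M: f(x)\in K+\gamma\}$ on the $M$-factor, and the latter set is compact by assumption; since $\Gamma^+_f$ is closed, the preimage is compact.

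For $\R$-constructibility I would observe that $\Gamma^+_f$ is subanalytic closed, being the preimage of the subanalytic closed cone $\gamma$ under the continuous subanalytic map $(x,y)\mapsto f(x)-y$. Hence $\cor_{\Gamma^+_f}\in\Derb_\Rc(\cor_{M\times\BBV})$, and the properness of $p$ on $\supp\cor_{\Gamma^+_f}$ together with \cite{KS90}*{Prop.~8.4.8} gives $\roim{p}\cor_{\Gamma^+_f}\in\Derb_\Rc(\cor_\BBV)$.

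For the microsupport estimate, I would combine Lemma~\ref{le:persihom2}, which asserts $\musupp(\cor_{\Gamma^+_f})\subset T^*M\times(\BBV\times\gammac)$, with the proper direct image estimate \cite{KS90}*{Prop.~5.4.4}. The latter is legitimate by the properness already checked and controls $\musupp(\roim{p}\cor_{\Gamma^+_f})$ by those $(y;\eta)\in T^*\BBV$ for which there exists $x\in M$ making $(x,y;0,\eta)$ lie in $\musupp(\cor_{\Gamma^+_f})$. The $\BBV$-component constraint from Lemma~\ref{le:persihom2} then forces $\eta\in\gammac$, so $\roim{p}\cor_{\Gamma^+_f}\in\Derb_{\gammac}(\cor_\BBV)$.

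The main step in the whole argument is the properness verification; once that is in place, both conclusions follow from routine applications of the standard direct image stability theorems in \cite{KS90}, so I do not expect any serious obstacle beyond transparently unpacking hypothesis~\eqref{hyp:persi1}.
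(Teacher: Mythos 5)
Your proposal is correct and follows essentially the same route as the paper: verify from \eqref{hyp:persi1} that $p$ is proper on $\Gamma^+_f$, then apply \cite{KS90}*{Prop.~5.4.4} together with Lemma~\ref{le:persihom2} for the microsupport bound and \cite{KS90}*{Prop.~8.4.8} for $\R$-constructibility. Your explicit remarks that $\Gamma^+_f$ is closed subanalytic and that the $T^*M$-component must vanish in the direct-image estimate are details the paper leaves implicit, but the argument is the same.
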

\begin{proof}
Let $K\subset\BBV$ be a compact subset. Then 
\eqn
\{(x,y)\in \Gamma^+_f;y\in K\}&=&\{(x,y)\in M\times\BBV;f(x)\in y+\gamma, y\in K\} \\
&\subset& \{x\in M;f(x)\in K+\gamma\}\times K.
\eneqn
Hence,  the map $p$ is proper on $\Gamma^+_f$.

Applying~\cite{KS90}*{Prop.~5.4.4}, we get that  $\roim{p}\cor_{\Gamma^+_f}$ belongs to 
$\Derb_{\gammac}(\cor_\BBV)$. Moreover, this object is $\R$-constructible by loc.\ cit.\ Prop.~8.4.8.
\end{proof}

\begin{example}\label{exa:persi1}
Let $M$ and $f$ be as above with  $\BBV=\R$ and $\gamma=\{t\leq0\}$.  
In this case,  $\Gamma^+_f=\{(x,t)\in M\times\R;f(x)\leq t\}$ is the epigraph of $f$.
Hypothesis~\eqref{hyp:persi1} is translated as:
\eq\label{hyp:persi2}
&&\parbox{70ex}{
for each $t\in\R$, the set $\{x\in M;f(x)\leq t\}$ is compact. 
}\eneq
 Set $K=\{x\in M;f(x)\leq0\}$. By the hypothesis, $K$ is compact. Let $a=\inf_{x\in K}f(x)$. Then 
$f(x)\geq a$ for all $x\in M$.

A more explicit example may be obtained as follows. Assume  that  $M$ is endowed with a subanalytic  distance  and let $S$ be a finite subset of $M$. Then one can choose $f(x)=d(x,S)$ in which case, 
\eqn
&&\Gamma^+_f=\{(x,t)\in M\times\R;d(x,S)\leq t\}=\bigcup_{s\in S}B(s;t)
\eneqn
where $B(s;t)$ is the closed ball of center $s$ and radius $t$. One can also endow each $s\in S$ with some weight 
$\rho(s)\in\R_{\geq0}$ and replace $B(s;t)$ with $B(s;{\rho(s)t})$.
\end{example}

\begin{example}\label{exa:persi2}
Let $\BBV=\R^n$, $\gamma=(\R_{\leq0})^n$ and let $f=(f_1,\cdots,f_n)$, each $f_i$ being continuous and subanalytic and satisfying
 hypothesis~\eqref{hyp:persi2}. Then $f$ satisfies the hypothesis of Theorem~\ref{th:persihom}. Moreover, in this case, 
 $\supp(\roim{p}\cor_{\Gamma^+_f})\subset y+\gamma^a$ for some $y\in\BBV$.
\end{example}

\begin{remark}
We have assumed that $M$ is real analytic and $f\cl M\to\BBV$ is continuous and subanalytic in order that 
$\roim{p}\cor_{\Gamma^+_f}$ be constructible. If $\BBV=\R$,   these hypotheses may be weakened by simply assuming that the critical values of $f$ are discrete. 

Also note that, although different from the class of $C^\infty$-maps, the class of continuous subanalytic maps is very large.
\end{remark}

\subsection{ Comparison with the Alexandrov topology}\label{subsection:alex}
Most of the authors define (higher dimensional) persistent homology modules as functors from the order set 
$(\R^n,\leq)$ (where $\leq$ is the product order corresponding to the cone $\gamma=(\R_{\leq0})^n$) to the category of  vector spaces over the field $\cor$. Some authors also consider the Alexandrov topology on $\R^n$ associated with $\gamma$. Let us clarify the link with these different approaches.

Set $\BBV$, $\gamma$ and $\Vg$ be as above. Denote by $\Wg$ the set $\BBV$ endowed with the Alexandrov topology associated with the cone $\gamma$. Hence,  $A\subset\BBV$ is  open in $\Wg$ if and only if $A=A+\gamma$. On the other hand denote by $\Xg$ the set $\BBV$ endowed with the order $x\leq y$ if and only if $x+\gamma\subset y+\gamma$. We look at $\Xg$ as a category, hence as a presite, hence as a site for the trivial Grothendieck topology. Therefore,
a sheaf on $\Xg$ is nothing but a presheaf on $\Xg$, that is, a functor $\Xg^\rop\to\md[\cor]$  and, of course, $\Xg^\rop$ is equivalent to $(\BBV,\leq^\rop)$. 

We have morphisms of sites
\eqn
&&\alpha\cl \Vg\to\Wg,\quad \Op_\Wg\ni A+\gamma\mapsto A+\Int(\gamma)\in\Op_\Vg,\\
&&\beta\cl \Wg\to\Xg,\quad \Xg\ni x\mapsto x+\gamma,\\
&&\rho\cl \Vg\to \Xg\quad \Xg\ni x\mapsto x+\Int(\gamma).
\eneqn
Note that $\rho\simeq \beta\circ\alpha$.  The morphisms $\alpha$, $\beta$ and $\rho$ induce functors
\eqn
\xymatrix{
\md[\Vg]\ar@<0.4ex>[r]^-{\oim{\alpha}}&\md[\Wg]\ar@<0.4ex>[l]^-{\opb{\alpha}},
}
\xymatrix{
\md[\Wg]\ar@<0.4ex>[r]^-{\oim{\beta}}&\md[\Xg]\ar@<0.4ex>[l]^-{\opb{\beta}},
}
\xymatrix{
\md[\Vg]\ar@<0.4ex>[r]^-{\oim{\rho}}&\md[\Xg]\ar@<0.4ex>[l]^-{\opb{\rho}}.
}
\eneqn
As already mentioned, the category of persistent modules  is the category 
of  presheaves (equivalently, of sheaves) on $\Xg$.
Consider the diagram of categories
\eqn
\xymatrix{
\mbox{$\md[\Wg]\rule[-1.0ex]{0ex}{2ex}$}\ar[rr]^-{\oim{\beta}}_-\sim&&\md[\Xg]\\
&\mbox{$\hs{1.5ex}\md[\Vg]\hs{.0ex}\rule{0ex}{2.0ex}$}%
\ar@<-0.ex>@{_{(}->}[lu]_-{\oim{\alpha}}\ar@{^{(}->}[ru]_-{\oim{\rho}}&
}
\eneqn

\begin{proposition}\label{pro:be}\footnote{As already mentioned, these  results were clarified during discussions of the second named author with Nicolas Berkouk.}
\banum
\item
The functor  $\oim{\rho}$ is fully faithful
and  is  not an equivalence. 
\item
The functor   $\oim{\beta}$ is an equivalence.\footnote{This statement is due to~\cite{Cu13}*{Th.4.2.10}. }
\item
The functor $\oim{\alpha}$ is fully faithful. 
\eanum
\end{proposition}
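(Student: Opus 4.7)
The plan is to prove the three items in the order (b), (c), (a), since (a) will drop out of the other two combined with a short explicit computation.

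For (b), I would invoke the classical correspondence between sheaves on an Alexandrov space and presheaves on the underlying poset. Concretely, every $\Wg$-open $U$ satisfies $U=\bigcup_{x\in U}(x+\gamma)$, and $x+\gamma$ is the smallest $\Wg$-open containing $x$. A sheaf $G\in\md[\Wg]$ is therefore determined by the family $\bigl(G(x+\gamma)\bigr)_{x\in\BBV}$, which is exactly the functor $\oim{\beta}G$ on $\Xg^\rop$. The quasi-inverse is $\opb{\beta}F(U)=\varprojlim_{x\in U}F(x)$, and the unit and counit are isomorphisms by direct inspection. This is the cited statement of~\cite{Cu13}*{Th.~4.2.10}.

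The crux of (c) is the lemma I would establish first: for every $V\in\Op_\Vg$, one has $V+\Int(\gamma)=V$. The inclusion $V+\Int(\gamma)\subset V+\gamma=V$ is clear. For the reverse, given $v\in V$, pick any $g_0\in\Int(\gamma)$; since $V$ is Euclidean-open, $v-tg_0\in V$ for all sufficiently small $t>0$, and then $v=(v-tg_0)+tg_0\in V+\Int(\gamma)$ because $tg_0\in\Int(\gamma)$. Consequently, for $F\in\md[\Vg]$ and $V\in\Op_\Vg$, $\oim{\alpha}F(V)=F(V+\Int(\gamma))=F(V)$, so restricting $\oim{\alpha}F$ to $\Vg$-opens recovers $F$. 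To deduce full faithfulness, take $F,F'\in\md[\Vg]$ and a morphism $\phi\cl\oim{\alpha}F\to\oim{\alpha}F'$. Its component on $U\in\Op_\Wg$ is $\phi_U\cl F(U+\Int(\gamma))\to F'(U+\Int(\gamma))$; setting $V=U+\Int(\gamma)\in\Op_\Vg$, the inclusion $V\subset U$ in $\Op_\Wg$ induces, after applying $\alpha$, the inclusion $V+\Int(\gamma)\subset U+\Int(\gamma)$, which by the lemma collapses to $V\subset V$, i.e.~the identity. The naturality square for $\phi$ therefore forces $\phi_U=\phi_V$, so $\phi$ is completely determined by its values on $\Vg$-opens, which assemble into a morphism $F\to F'$ in $\md[\Vg]$.

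Part (a) now follows quickly. Since $\rho\simeq\beta\circ\alpha$ as morphisms of sites, $\oim{\rho}\simeq\oim{\beta}\circ\oim{\alpha}$ is fully faithful by (b) and (c). For the failure of essential surjectivity I exhibit an explicit counterexample. Take $\BBV=\R$ and $\gamma=\R_{\leq0}$, so that the $\Vg$-opens are exactly the intervals $(-\infty,a)$ for $a\in[-\infty,+\infty]$. Define $F\in\md[\Xg]$ by $F(x)=\cor$ for $x\geq0$ and $F(x)=0$ for $x<0$, with the identity transition maps wherever both sides are $\cor$. If $F\simeq\oim{\rho}G$ for some $G\in\md[\Vg]$, then $G((-\infty,x))=F(x)$; but the sheaf axiom on $\Vg$ applied to the increasing cover $(-\infty,0)=\bigcup_{x<0}(-\infty,x)$ would force $F(0)=G((-\infty,0))=\varprojlim_{x<0}G((-\infty,x))=\varprojlim_{x<0}0=0$, contradicting $F(0)=\cor$.

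The main obstacle I anticipate is staying alert to the direction conventions across the three sites $\Vg$, $\Wg$, $\Xg$: the underlying functor of $\alpha$ on opens goes $\Op_\Wg\to\Op_\Vg$ by $A+\gamma\mapsto A+\Int(\gamma)$, so that $\oim{\alpha}F$ is evaluated by $U\mapsto F(U+\Int(\gamma))$, and the naturality step in (c) must genuinely collapse a $\Wg$-morphism to its restriction on $\Vg$-opens. I also need to verify at the outset that $A+\Int(\gamma)$ really lands in $\Op_\Vg$ (it is $\gamma$-invariant because $\Int(\gamma)+\gamma=\Int(\gamma)$, and Euclidean-open because $\Int(\gamma)$ is), which is what makes $\alpha$ a bona fide morphism of sites.
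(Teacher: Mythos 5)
Your proof is correct, but it organizes the three parts in the opposite logical order from the paper, so it is worth pointing out the difference. The paper proves (a) directly: for full faithfulness it uses the presheaf inverse image $\popb{\rho}$ together with the observation that $\{U_x\}_{x\in\BBV}$ is a basis of $\Vg$, and for the failure of essential surjectivity it applies the sheaf axiom to a skyscraper presheaf $\delta_x$. It then proves (b) exactly as you do (this is Curry's theorem), and deduces (c) as a formal corollary from $\oim{\rho}\simeq\oim{\beta}\circ\oim{\alpha}$. You instead prove (c) directly, and the engine of your argument is the pointwise lemma $V+\Int(\gamma)=V$ for $V\in\Op_\Vg$ --- which is precisely Lemma~\ref{lem:Ugamma}~(ii) of the paper applied to a $\gamma$-invariant open --- combined with the observation that the $\Wg$-restriction from $U$ to $U+\Int(\gamma)$ becomes the identity after applying $\alpha$. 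You then derive the full-faithfulness part of (a) from (b) and (c), and supply an explicit counterexample (a cumulative version of the paper's skyscraper) for the non-equivalence; the contradiction you extract from the sheaf axiom on the cover $(-\infty,0)=\bigcup_{x<0}(-\infty,x)$ is the same mechanism the paper invokes for $\delta_x$. Your route has the advantage of avoiding the presheaf inverse image $\popb{\rho}$ entirely, and it isolates the geometric fact $V+\Int(\gamma)=V$ as the driver of the argument, which is perhaps more transparent than the basis-of-topology computation in the paper; the paper's route is shorter on (c) since it gets it for free, but it pushes the work into the $\popb{\rho}$ computation for (a). Both are valid and rely on the same two underlying facts (the Alexandrov correspondence, and the interplay between $\gamma$ and $\Int(\gamma)$ on open sets).
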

\begin{proof}
In the sequel, for  $x\in\BBV$ we set $U_x=x+\Int(\gamma)$.

\spa
(a)-(i) Denote by $\popb{\rho}$ the inverse image by $\rho$ in the category of presheaves. Let $F\in\md[\cor_\Vg]$ and let $x\in\BBV$. One has 
 $(\popb{\rho}\oim{\rho}F)(U_x)\simeq F(U_x)$. Since the family $\{U_x\}_{x\in \BBV}$ is  a basis of the topology of $\Vg$, we conclude that $\opb{\rho}\oim{\rho}F\simeq F$. 

 \spa
 (a)-(ii) Let $x\in\BBV$ and consider the skyscraper presheaf  on $\Xg$ at $x$, denoted here $\delta_x$. Then
  $\opb{\rho}\delta_x(x)\simeq0$  since 
for a sheaf $F$ on $\Vg$, $F(U_x)\simeq \sprolim F(U_y)$ where the limit ranges over the family $y\in U_x$. 

 \spa
 (b)-(i) Denote by $\popb{\beta}$ the inverse image by $\beta$ in the category of presheaves. Let $F\in\md[\cor_\Wg]$ and let $x\in\BBV$. One has 
 $(\popb{\beta}\oim{\beta}F)(x+\gamma)\simeq F(x+\gamma)$. Since the family $\{x+\gamma\}_{x\in\BBV}$ is a basis of the topology of $\Wg$, we conclude that 
$\opb{\beta}\oim{\beta}F\simeq F$.
 
 \spa
 (b)-(ii) Let $G\in\md[\cor_{\Xg}]$. Then $(\oim{\beta}\opb{\beta}G)(x)\simeq
(\opb{\beta}G)(x+\gamma)\simeq G(x)$.

\spa
(c) follows from (a) and (b) since $\oim{\rho}\simeq\oim{\beta}\circ\oim{\alpha}$.
\end{proof}
It follows from Proposition~\ref{pro:be} that one may consider the category of $\gamma$-sheaves as a full subcategory of that of persistent modules.

\subsection{The case of dimension one}\label{subsection:dimone}

The aim of this paper is to describe the category $\Derb_{\rcg}(\cor_\BBV)$. We first treat the case of the dimension one, where things are particularly simple. We shall study the category of $\gamma$-sheaves and construct a category of barcodes in dimension one, proving the equivalence of these categories in Theorem~\ref{th:B1}.
Note that various constructions of  categories of barcodes already exist in the literature. See in particular~\cites{BS14,BSD15,BL16}.

 We denote by $t$ a coordinate on $\R$ and by $(t;\tau)$ the associated homogeneous coordinates on $T^*\R$.
Therefore, $F\in \mdrc[\cor_\R]$ if there exists a discrete set $Z\subset\R$ such that $F$ is locally constant on $\R\setminus Z$ and moreover, the stalk of $F$ at each point of $\R$ is finite-dimensional.

In the sequel, an interval
means a non-empty convex  subset of $\R$.

\begin{proposition}
Let $F\in\mdrc[\cor_\R]$ and assume that $F$ has compact support.
Then, there exist a finite set $A$ and a family of intervals $\{I_\alpha\}_{\alpha \in A}$ 
such that $F \simeq \soplus_{\alpha \in A}\cor_{I_\alpha}$.
Moreover such a decomposition is unique. 
\end{proposition}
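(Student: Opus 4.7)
The plan is to reduce, via the finite stratification underlying $\R$-constructibility, to the classification of indecomposable finite-dimensional representations of a type $A$ quiver.

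Since $F\in\mdrc[\cor_\R]$ has compact support, $\Sing(F)$ is a finite set; enlarge it if necessary to $\{t_0<t_1<\cdots<t_n\}$ with $\supp(F)\subset [t_0,t_n]$, and set $t_{-1}=-\infty$, $t_{n+1}=+\infty$. For each $-1\leq i\leq n$ let $V_i$ denote the stalk of $F$ at any point of the open interval $(t_i,t_{i+1})$; the support condition forces $V_{-1}=V_n=0$. For each $0\leq i\leq n$ set $W_i=F_{t_i}$. Taking stalks in sufficiently small neighborhoods of $t_i$ produces canonical generization maps $\mu_i^-\cl W_i\to V_{i-1}$ and $\mu_i^+\cl W_i\to V_i$.

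The central step is to check that the assignment $F\mapsto (W_\bullet,V_\bullet,\mu_\bullet^\pm)$ defines an equivalence between the full subcategory of $\mdrc[\cor_\R]$ consisting of sheaves with $\Sing(F)\subset\{t_0,\ldots,t_n\}$ and $\supp(F)\subset [t_0,t_n]$, and the category of finite-dimensional representations of the quiver
\[
Q\cl W_0\xrightarrow{\mu_0^+}V_0\xleftarrow{\mu_1^-}W_1\xrightarrow{\mu_1^+}V_1\xleftarrow{}\cdots\xleftarrow{\mu_n^-}W_n,
\]
a type $A_{2n+1}$ quiver with alternating orientation. Morphisms between such sheaves are determined by their stalk-wise actions compatibly with generization, yielding full faithfulness; essential surjectivity is obtained by gluing constant sheaves on the open intervals $(t_i,t_{i+1})$ along the prescribed stalks at the $t_i$ via the generization maps.

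Given this reduction, I would invoke Gabriel's classification of representations of type $A$ quivers: every finite-dimensional representation of $Q$ decomposes, uniquely up to permutation of summands, as a direct sum of indecomposable ``interval representations'' supported on a connected subquiver (of dimension one at each included vertex, with identity maps). Tracing through the equivalence, these correspond precisely to the sheaves $\cor_I$ where $I\subset\R$ is an interval with endpoints in $\{t_0,\ldots,t_n\}$, the interval being open, half-open, or closed according to which boundary vertices $W_i$ belong to the subquiver. Uniqueness of the decomposition then follows from the Krull--Schmidt theorem, which applies since $\mathrm{End}(\cor_I)\simeq\cor$ is a local ring for every interval $I$. The main obstacle is establishing the equivalence with $\mathrm{Rep}(Q)$ rigorously; once this is done, the classification of indecomposables and the uniqueness are classical.
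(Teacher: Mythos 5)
Your argument is essentially the one the paper relies on: the paper attributes this proposition to Guillermou (Cor.~7.3 of \cite{Gu16}), who likewise deduces it from Gabriel's theorem on representations of type $A$ quivers, which is exactly the reduction you carry out, with uniqueness from Krull--Schmidt. The only step you leave schematic --- the equivalence between sheaves constructible with respect to the fixed stratification $\{t_0<\dots<t_n\}$ and representations of the alternating zigzag quiver --- is a standard verification, so your proposal matches the paper's (cited) proof in approach and substance.
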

This result is due to Guillermou~\cite{Gu16}*{Cor.~7.3}  who deduced it from  Gabriel's theorem on the representation of quivers. We shall extend it to the non compact case, obtaining a result 
very similar to a theorem of Crawley-Boevey~\cite{CB14}*{Theorem 1.1}.

\begin{theorem}\label{th:Gth2}
Let $F\in\mdrc[\cor_\R]$.
Then, there exists a locally finite family of intervals $\{I_\alpha\}_{\alpha \in A}$ 
such that $F \simeq \soplus_{\alpha \in A}\cor_{I_\alpha}$.
Moreover such a decomposition is unique. 
\end{theorem}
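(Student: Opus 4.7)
The plan is to reduce the theorem to the compact-support case just established (due to Guillermou) by a truncation argument. Since $F$ is $\R$-constructible, $Z \eqdot \Sing(F)$ is a discrete subset of $\R$. I choose a strictly increasing bi-infinite sequence $(c_n)_{n \in \Z}$ in $\R \setminus Z$ with $c_n \to \pm\infty$, set $U_{n,m} \eqdot (c_n, c_m)$ for $n < m$, and define $F_{n,m} \eqdot F \tens \cor_{U_{n,m}}$. Each $F_{n,m}$ is $\R$-constructible with support in the compact set $[c_n, c_m]$, so the preceding proposition yields a unique finite decomposition $F_{n,m} \simeq \soplus_{\alpha \in A_{n,m}} \cor_{I_\alpha^{(n,m)}}$ with $I_\alpha^{(n,m)} \subset U_{n,m}$.

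The next step will be to establish compatibility between these decompositions. For $n' \leq n < m \leq m'$ one has $F_{n,m} \simeq F_{n',m'} \tens \cor_{U_{n,m}}$, so tensoring the decomposition of $F_{n',m'}$ by $\cor_{U_{n,m}}$ and using $\cor_I \tens \cor_{U_{n,m}} \simeq \cor_{I \cap U_{n,m}}$ produces a second decomposition $F_{n,m} \simeq \soplus_\alpha \cor_{I_\alpha^{(n',m')} \cap U_{n,m}}$ (summands with empty intersection being zero). By uniqueness in the compact case, this identifies $A_{n,m}$ with $\{\alpha \in A_{n',m'} : I_\alpha^{(n',m')} \cap U_{n,m} \neq \varnothing\}$ via a bijection $\sigma$ satisfying $I_{\sigma(\alpha)}^{(n,m)} = I_\alpha^{(n',m')} \cap U_{n,m}$. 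I then let $A$ denote the set of compatible families $\underline\alpha = (\alpha_{n,m})$ which are eventually non-trivial, and for each such family set $I_{\underline\alpha} \eqdot \bigcup_{(n,m)} I_{\alpha_{n,m}}^{(n,m)}$, an interval in $\R$. The resulting family $\{I_{\underline\alpha}\}_{\underline\alpha \in A}$ is automatically locally finite, since each $x \in \R$ lies in some $U_{n,m}$ and at most $|A_{n,m}|$ of the intervals can contain it.

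The hardest step will be to assemble a global isomorphism $F \simeq \soplus_{\underline\alpha \in A} \cor_{I_{\underline\alpha}}$ from these local ones. My strategy is a Mittag--Leffler induction: inductively pick isomorphisms $\phi_n \cl F_{-n,n} \isoto \soplus_\alpha \cor_{I_\alpha^{(-n,n)}}$ so that $\phi_n \tens \cor_{U_{-(n-1),n-1}}$ coincides with $\phi_{n-1}$ after identifying indices by the bijection $\sigma$ from the previous step. The obstruction at each stage is an automorphism of the finite sum $\soplus \cor_{I_\alpha^{(-(n-1),n-1)}}$, which lifts (by extending by the identity on the ``new'' summands of $\phi_n$'s target) to an automorphism of $\soplus \cor_{I_\alpha^{(-n,n)}}$; composing $\phi_n$ with this lift restores compatibility. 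The family $(\phi_n)$ then stabilizes on every bounded open and assembles into the required global isomorphism. Uniqueness follows by the same restriction-to-$U_{n,m}$ trick: any second decomposition $F \simeq \soplus_\beta \cor_{J_\beta}$, once tensored with $\cor_{U_{n,m}}$, gives another decomposition of $F_{n,m}$, which by compact uniqueness must match $\{I_\alpha^{(n,m)}\}$ for every $(n,m)$; taking unions in $(n,m)$ forces $\{J_\beta\} = \{I_{\underline\alpha}\}$ as locally finite families.
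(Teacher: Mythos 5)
Your overall strategy is the same as the paper's: truncate $F$ to an exhausting family of bounded open intervals, apply Guillermou's compact-support decomposition there, match the local decompositions via uniqueness, define the limit intervals, and then try to glue. The gap is precisely in the step you single out as the hardest. You claim that the discrepancy $\theta=\phi_{n-1}\circ(\phi_n\vert_{U_{n-1}})^{-1}$, an automorphism of $\soplus_{\alpha\in A_{n-1}}\cor_{I_\alpha}\vert_{U_{n-1}}$, ``lifts, by extending by the identity on the new summands,'' to an automorphism over the larger window. This is false in general. Since the new summands restrict to zero on the smaller window, any lift must have its old-old matrix block restricting to $\theta$ componentwise, so each component $\theta_{\beta\alpha}$ must lie in the image of the restriction map $\Hom(\cor_{I_\alpha}\vert_{U_n},\cor_{I_\beta}\vert_{U_n})\to\Hom(\cor_{I_\alpha}\vert_{U_{n-1}},\cor_{I_\beta}\vert_{U_{n-1}})$; these maps are injective but not surjective. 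Concretely, take $F=\cor_{(-1,+\infty)}\oplus\cor_{(-2,5)}$ with windows $(-3,3)\subset(-10,10)$: there is a nonzero morphism $u\cl\cor_{(-1,3)}\to\cor_{(-2,3)}$ (the intersection $(-1,3)$ is closed in the source and open in the target), whereas $\Hom(\cor_{(-1,10)},\cor_{(-2,5)})=0$ because $(-1,5)$ is not closed in $(-1,10)$. Hence the unipotent automorphism $\id+u$ over the small window lifts to no endomorphism over the large one, with or without extra summands; and if $\phi_{n-1}$ was chosen with such a twist, no later correction of $\phi_n$ restores compatibility. So the induction ``pick $\phi_{n-1}$, then adjust $\phi_n$'' genuinely breaks.

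This is exactly why the paper does not correct $\phi_n$ stepwise. It first proves injectivity of the restriction maps on $\Hom$'s, then uses finite-dimensionality of the endomorphism algebras to show that the images $K_{k,n}$ of $\Aut\bl\soplus\cor_{I_\alpha}\vert_{U_k}\br\to\Aut\bl\soplus\cor_{I_\alpha}\vert_{U_n}\br$ stabilize for $k\gg n$; consequently the images $\tilde P_n=\Im(P_m\to P_n)$ of the sets $P_n$ of isomorphisms $F\vert_{U_n}\isoto\soplus\cor_{I_\alpha}\vert_{U_n}$ also stabilize and form a projective system with surjective transition maps, and the compatible family $(\phi_n)$ is chosen inside the $\tilde P_n$. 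Your argument becomes correct only after adding this stabilization (Mittag--Leffler) input, or an equivalent substitute; as written, the lifting claim is unjustified and false. The remaining parts of your proposal (choice of cut points off $\Sing(F)$ rather than the windows $]-n,n[$ is cosmetic; the uniqueness outline via restriction to the windows) are in line with the paper.
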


\Proof
For $n\in\Z_{>0}$, set $U_n=]-n,n[$ and
$F_n=F\tens\cor_{U_n}$.
Then by the theorem above,
there exists a finite family $\{I^{(n)}_\alpha\}_{\alpha \in A_n}$
of intervals  in $U_n$ such that $F_n\simeq\oplus_{\al\in A_n}\cor_{I^{(n)}_\al}$.
Then there exists an injective map $A_n\monoto A_{n+1}$ 
(hereafter we identify $A_n$ as a subset of $A_{n+1}$ by this injective map) such that
$I^{(n)}_\al=I^{(n+1)}_\al\cap U_n$ for $\al\in A_n$.
Set $A=\bigcup_{n\in\Z_{>0}}A_n$,
Then, for any $\al\in A$, there exists a unique  interval $I_\al$
such that
$I_\al\cap U_n=I^{(n)}_\al$ for any $n$ such that $\al\in A_n$.
Then $\{I_\al\}_{\al\in A}$ is a locally finite family of intervals.
Note that $A_n=\set{\al\in A}{I_\al\cap U_n\not=\emptyset}$ and 
$I^{(n)}_\al=I_\al\cap U_n$ for $\al\in A_n$.

Let us show that $F$ is isomorphic to
$\soplus_{\al\in A}\cor_{I_\al}$.
For any $n$, there exists an isomorphism
$$\phi_n\cl F\vert_{U_n}\isoto
\soplus_{\al\in A}\cor_{I_\al}\vert_{U_n}
\simeq \soplus_{\al\in A_n}\cor_{I_\al}\vert_{U_n}.$$
The restriction map
$$\Hom[{\md[\cor_{U_m}]}]\bl\cor_{I_\al}\vert_{U_m},\cor_{I_\beta}\vert_{U_m}\br
\to
\Hom[{\md[\cor_{U_n}]}]\bl\cor_{I_\al}\vert_{U_n},\cor_{I_\beta}\vert_{U_n}\br
$$
is injective for $m\ge n$ and $\al,\beta\in A_n$.
Indeed, the injectivity follows from the commutative diagram
$$\xymatrix{
\mbox{$\rule[-2ex]{0ex}{3ex}
\Hom[{\md[\cor_{U_m}]}]\bl\cor_{I_\al}\vert_{U_m},\cor_{I_\beta}\vert_{U_m}\br$}
\ar[r]\ar@{>->}[d]&
{\Hom[{\md[\cor_{U_n}]}]\bl\cor_{I_\al}\vert_{U_n},
\cor_{I_\beta}\vert_{U_n}\br}\ar[d]\\
{\Hom[{\md[\cor_{I_\al\cap I_\beta\cap U_m}]}]
\bl\cor_{I_\al\cap I_\beta\cap U_m},\cor_{I_\al\cap I_\beta\cap U_m}\br\ }
\ar@{>->}[r]&
{\Hom[{\md[\cor_{I_\al\cap I_\beta\cap U_n}]}]
\bl\cor_{I_\al\cap I_\beta\cap U_n},\cor_{I_\al\cap I_\beta\cap U_n}\br}
}
$$
Here the injectivity of the bottom arrows follows from the fact that
$I_\al\cap I_\beta\cap U_m$ is empty if $I_\al\cap I_\beta\cap U_n$ is empty.

Hence the restriction map
\eq
\End_{\md[\cor_{U_m}]}\bl\soplus_{\al\in A_n}\cor_{I_\al}\vert_{U_m}\br
\To\End_{\md[\cor_{U_n}]}\bl\soplus_{\al\in A_n}\cor_{I_\al}\vert_{U_n}\br
\label{mor:mn}
\eneq
is injective for $m\ge n$.
Therefore, if
an automorphism $f$
of $\soplus_{\al\in A}\cor_{I_\al}\vert_{U_n}\simeq
\soplus_{\al\in A_n}\cor_{I_\al}\vert_{U_n}$, as well as $f^{-1}$, lifts
to an endomorphism of
$\soplus_{\al\in A_n}\cor_{I_\al}\vert_{U_m}$,
then it lifts to
an automorphism of
$\soplus_{\al\in A_n}\cor_{I_\al}\vert_{U_m}$.

By the injectivity of \eqref{mor:mn} and the fact that
$\dim\bigl(\End_{\md[\cor_{U_n}]}\bl\soplus_{\al\in A_n}\cor_{I_\al}\vert_{U_n}\br\bigr)
<\infty$, there exists
$m\ge n$ such that
$$\End_{\md[\cor_{U_k}]}\bl\soplus_{\al\in A_n}\cor_{I_\al}\vert_{U_k}\br
\To\End_{\md[\cor_{U_m}]}\bl\soplus_{\al\in A_n}\cor_{I_\al}\vert_{U_m}\br
$$
is an isomorphism  for any $k\ge m$.

Thus we conclude that
the image $K_{k,n}$ of the restriction map
$$\Aut\bl\soplus_{\al\in A}\cor_{I_\al}\vert_{U_k}\br
\To\Aut\bl\soplus_{\al\in A}\cor_{I_\al}\vert_{U_n})
$$
is equal to $K_{m,n}$ for any $k\ge m$.
Let $P_n$ be the set of isomorphisms
$F\vert_{U_n}\isoto \soplus_{\al\in A}\cor_{I_\al}\vert_{U_n}$.
Then $\{P_n\}_{n\in\Z_{>0}}$ is a projective system of non-empty sets.
Moreover,
for any $n$, there exists $m\ge n$ such that
$\Im(P_k\to P_n)=\Im(P_m\to P_n)$ for any $k\ge m$.
Set $\tilde P_n=\Im(P_m\to P_n)\subset P_n$.
Then $\{\tilde P_n\}_{n\in\Z_{>0}}$ is a projective system of
non-empty sets such that
the map $\tilde P_m\to\tilde P_n$ is surjective for any $m\ge n$.

Hence, by replacing 
$\phi_n$, 
we can choose $\phi_n\in\tilde P_n$ inductively so that we have
$\phi_{n+1}\vert_{U_{n}}=\phi_{n}$ for every $n$. 
Thus we conclude that $F\simeq\soplus_{\al\in A}\cor_{I_\al}$.
\QED

\begin{corollary}\label{cor:dimhom1}
Let $F,G\in \mdrc[\cor_\R]$. Then $\Ext{j}(F,G)=0$ for $j>1$.
\end{corollary}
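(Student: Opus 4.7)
By Theorem~\ref{th:Gth2} we may write $F\simeq\soplus_\alpha\cor_{I_\alpha}$ and $G\simeq\soplus_\beta\cor_{J_\beta}$ as locally finite direct sums of interval sheaves. Since $\Ext{j}(-,-)$ turns coproducts in the first variable into products, and since the local finiteness of the family $\{J_\beta\}$ together with the constructibility of each sheaf $\rhom(\cor_I,\cor_{J_\beta})$ allows one to commute $\Ext{j}(\cor_I,-)$ with the direct sum in the second variable, the problem reduces to showing $\Ext{j}(\cor_I,\cor_J)=0$ for $j\geq 2$ for arbitrary intervals $I,J\subset\R$.

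I would then treat three sub-cases on the type of $I$. If $I$ is open, adjunction gives $R\mathrm{Hom}(\cor_I,\cor_J)\simeq R\Gamma(I,\cor_J\vert_I)$, which vanishes in degrees $\geq 2$ because the open set $I\subset\R$ has cohomological dimension $\leq 1$. If $I=\{x\}$ is a single point, adjunction for the closed embedding $i_x\cl\{x\}\hookrightarrow\R$ gives $R\mathrm{Hom}(\cor_{\{x\}},\cor_J)\simeq i_x^!\cor_J$, a complex on $\{x\}$ concentrated in degrees~$0$ and~$1$ by a direct computation of local cohomology of a constructible sheaf on the $1$-manifold~$\R$. In the remaining case, where $I$ is neither open nor a point, the short exact sequence
\[0\to\cor_{I^\circ}\to\cor_I\to\cor_{I\setminus I^\circ}\to 0,\]
in which $I^\circ$ is an open interval and $\cor_{I\setminus I^\circ}$ is a finite direct sum of skyscrapers (one for each closed endpoint of $I$), reduces the desired vanishing to the two cases already treated via the long exact sequence for $R\mathrm{Hom}(-,\cor_J)$.

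The main technical obstacle is the commutation of $\Ext{j}$ with infinite direct sums in the second variable; this rests on the local finiteness built into Theorem~\ref{th:Gth2}, which ensures that on any relatively compact open subset of $\R$ only finitely many $J_\beta$ contribute, so that $\rhom(\cor_I,G)$ is locally a finite direct sum of $\rhom(\cor_I,\cor_{J_\beta})$. Once that bookkeeping is in place, the three-case interval-by-interval argument is elementary.
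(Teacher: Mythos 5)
Your proof is correct and follows essentially the same route as the paper: decompose $F$ and $G$ into locally finite sums of interval sheaves via Theorem~\ref{th:Gth2}, reduce to the vanishing of $\Ext{j}(\cor_{I},\cor_{J})$ for a single pair of intervals (which the paper dismisses as obvious and you verify by adjunction and the fact that $\R$ has cohomological dimension one). The only cosmetic difference is in the second variable: the paper replaces the locally finite direct sum $\soplus_\beta\cor_{J_\beta}$ by the isomorphic product $\prod_\beta\cor_{J_\beta}$, so that $\Ext{j}$ turns both variables into products at once --- a marginally cleaner way of carrying out the ``bookkeeping'' you handle by local finiteness of supports.
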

\begin{proof}
By Theorem~\ref{th:Gth2}, we may assume that $F=\soplus_{a\in A}F_a$
 and $G=\prod_{b\in B}G_b$  with $F_a=\cor_{I_a}$ and $G=\cor_{J_b}$ where 
$I_a$ and $J_b$ are intervals. 
Since 
\eqn
&&\Ext{j}\bl\soplus_{a\in A} F_a,\prod_{b\in B}G_b\br\simeq\prod_{(a,b)\in A\times B}\Ext{j}(F_a,G_b),
\eneqn
we are reduced to prove the result with $F$ and $G$ replaced with $F_a$ and $G_b$ and in this case the result  is obvious. 
\end{proof}

\begin{example}
In Corollary~\ref{cor:dimhom1}, one cannot replace $j>1$ with $j\geq 1$. Indeed, 
one has $\Ext{1}(\cor_{[0,+\infty[},\cor_{]-\infty,0[})\simeq\cor$. (See Lemma~\ref{le:extn} for a generalization in higher dimension.)
\end{example}
Corollary~\ref{cor:dimhom1} classically  implies

\begin{corollary}\label{cor:dimhom2}
Let $F\in\Derb_{\Rc}(\cor_\R)$. Then $F\simeq\soplus_jH^j(F)\,[-j]$.
\end{corollary}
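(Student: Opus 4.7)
\medskip\noindent
\textbf{Proof proposal.} The plan is to reduce the statement to the classical fact that in the bounded derived category of an abelian category of cohomological dimension at most one, every complex splits as the direct sum of its shifted cohomologies. The key input is Corollary~\ref{cor:dimhom1}, which says that $\mdrc[\cor_\R]$ has cohomological dimension $\leq 1$.

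I would argue by induction on the amplitude of $F$, i.e.\ on $b-a$ where $[a,b]$ is the smallest interval such that $H^j(F)=0$ for $j\notin[a,b]$. The case of amplitude zero is immediate since then $F\simeq H^a(F)\,[-a]$. For the induction step, write $n=b$ (the top degree), and consider the canonical distinguished triangle
\eqn
\tau^{\leq n-1}F\to F\to H^n(F)\,[-n]\to\tau^{\leq n-1}F\,[1].
\eneqn
By the induction hypothesis applied to $\tau^{\leq n-1}F$, there is an isomorphism
\eqn
\tau^{\leq n-1}F\simeq\soplus_{j\leq n-1}H^j(F)\,[-j].
\eneqn
The connecting morphism $H^n(F)\,[-n]\to\tau^{\leq n-1}F\,[1]$ then belongs to
\eqn
\Hom_{\Derb(\cor_\R)}\bl H^n(F)\,[-n],\soplus_{j\leq n-1}H^j(F)\,[-j+1]\br
\simeq\soplus_{j\leq n-1}\Ext{n-j+1}\bl H^n(F),H^j(F)\br.
\eneqn
Since $n-j+1\geq2$ for every $j\leq n-1$, each of these $\Ext$-groups vanishes by Corollary~\ref{cor:dimhom1}. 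Hence the connecting morphism is zero, so the triangle splits, yielding $F\simeq\tau^{\leq n-1}F\oplus H^n(F)\,[-n]\simeq\soplus_{j}H^j(F)\,[-j]$.

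I do not expect any serious obstacle: the only thing to check carefully is the identification of the obstruction class as a sum of $\Ext^{\geq 2}$ groups in $\mdrc[\cor_\R]$, which uses the equivalence $\Derb(\mdrc[\cor_\R])\isoto\Derb_{\Rc}(\cor_\R)$ recalled in the text to pass freely between morphisms computed in the constructible derived category and Yoneda extensions in $\mdrc[\cor_\R]$. Once this is in place, the vanishing is direct from Corollary~\ref{cor:dimhom1} and the induction closes.
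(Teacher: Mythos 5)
Your argument is correct and is precisely the classical dévissage that the paper invokes without detail when it says Corollary~\ref{cor:dimhom1} ``classically implies'' the splitting: induction on amplitude via the truncation triangle, with the connecting morphism killed because it lies in a (finite) sum of $\Ext^{\geq 2}$ groups. No gap; this is the same approach as the paper, just written out.
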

In other words, any object of the bounded derived category is isomorphic to the direct sum of its shifted cohomology objects. The same phenomena appears with the bounded derived  category of the category of  $\Z$-modules or that of $\cor[X]$-modules, for $\cor$ a field.

\subsubsection*{Barcodes}
We denote by $\R_\gamma$ the set $\R$ endowed with the $\gamma$-topology where $\gamma=]-\infty,0]$.
Hence, the $\gamma$-open sets are the open subsets  $\R_{<t}\eqdot]-\infty,t[$ and the  $\gamma$-closed sets are the closed subsets 
$\R_{\geq t}\eqdot [t,+\infty[$, with $t\in\ol\R$. 
The non-empty $\gamma$-locally closed sets are the intervals
$[a,b[$ with $-\infty\le a<b\le+\infty$.

\begin{definition}\label{def:barcode}
A {\em $\gamma$-barcode} $(A, I)$, or simply a barcode,  is the data of  a 
set of indices $A$ and a family $I=\{I_\alpha\}_{\alpha\in A}$ of intervals 
$I_\alpha=\intervCO{a_\alpha,b_\alpha}\subset\R$
with $-\infty\le a_\alpha<b_\alpha\le+\infty$,  these data satisfying
\eq\label{eq:bar1}
&&\parbox{75ex}{
the family $\{I_\alpha\}_{\alpha\in A}$ is locally finite on $\R$,  that is, 
for any compact subset $K$ of $\R$,
$\set{\al\in A}{I_\al\cap K\not=\emptyset}$ is finite.
}
\eneq
In particular, $A$ is countable.  
The support of the barcode $(A, I)$, denoted  by $\supp(A,I)$, is the closed set $\bigcup_{\alpha\in A}\ol{I_\alpha}$.
\end{definition}
Note that we do not ask $I_\alpha\neq I_\beta$ for  $\alpha\neq \beta$. Otherwise, we should have to endow each $I_\alpha$ with a multiplicity $m_\alpha\in\N$.

We shall now construct a category of $\gamma$-barcodes, with Theorem~\ref{th:B1} in view.

Let us say that a barcode $(A,I)$ is \define{elementary} if $A\simeq\rmpt$.  We shall identify the elementary barcode $(\rmpt,I)$ with the interval $I$.

Given two elementary barcodes $\intervCO{a,b}$ and $\intervCO{c,d}$, we set
\eq\label{eq:hombar1}
\Hom[\Barc](\intervCO{a,b},\intervCO{c,d})&=&\begin{cases}
\cor&\mbox{if $a\leq c<b\leq d$,}\\
0&\mbox{otherwise.}\end{cases}
\eneq
Note that 
\eq\label{eq:hombar5}
\Hom[\Barc](\intervCO{a,b},\intervCO{c,d})&\simeq &\Hom(\cor_{\intervCO{a,b}},\cor_{\intervCO{c,d}}).
\eneq
Given two barcodes $(A,I)$ and $(B,J)$ with  $I_{\alpha}=\intervCO{a_{\alpha},b_{\alpha}}$ and 
$J_\beta=\intervCO{c_{\beta},d_{\beta}}$, we set
\eq\label{eq:hombar2}
\Hom[\Barc]((A,I),(B,J))=\prod_{(\alpha,\beta)\in A\times B}\Hom[\Barc](\intervCO{a_\alpha,b_\alpha},\intervCO{c_\beta,d_\beta}).
\eneq
For $u\in \Hom[\Barc]((A,I),(B,J))$ and $v\in \Hom[\Barc]((B,J),(C,K))$, the composition $v\circ u$ 
is defined as follows.  Let  $u=\{c_{\alpha,\beta}\}_{(\alpha,\beta)\in A\times B}$ and 
$v=\{c_{\beta,\gamma}\}_{(\beta,\gamma)\in B\times C}$ 
with $c_{\alpha,\beta}, c_{\beta,\gamma}\in\cor$. 
One sets $v\circ u=\{c_{\alpha,\gamma}\}_{(\alpha,\gamma)\in A\times C}$ with 
\eq\label{eq:hombarcomp}
&&c_{\alpha,\gamma}=\sum_{\beta\in B}c_{\alpha,\beta}\cdot  c_{\beta,\gamma}
\qt{ if $\Hom[\Barc](I_\al, K_\gamma)=\cor$.}
\eneq
For a given $(\alpha,\gamma)$, the sum in~\eqref{eq:hombarcomp} is finite.
Indeed, given $\intervCO{a,b}$ and $\intervCO{e,f}$, consider the intervals $\intervCO{c_\beta,d_\beta}$
with $-\infty\leq a\leq c_\beta<b\leq d_\beta\leq+\infty$ and $-\infty\leq c_\beta\leq e < d_\beta\leq f\leq+\infty$. 
If $b<+\infty$ or $-\infty <e$ then either $b$ or $e$ belongs to $\intervCO{c_\beta,d_\beta}$ and the set of such $\beta$ is finite thanks to~\eqref{eq:bar1}.  Now assume that $b=+\infty$ and $-\infty=e$. Then $c_\beta=-\infty$ and $d_\beta=+\infty$ and again the family of such $\beta$ must be finite. 

\begin{notation}
We denote by   $\Barc$ the category constructed above and  call  it the category of $\gamma$-barcodes.
\end{notation}
\begin{remark}
The category $\Barc$ contains much more morphisms than the category constructed in~\cite{BL16}. 
\end{remark}
\begin{lemma}
The category $\Barc$ is additive.
\end{lemma}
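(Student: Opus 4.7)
The plan is to verify the axioms of an additive category in the standard order: (i) $\Hom$-sets are abelian groups, (ii) composition is bilinear, (iii) there is a zero object, (iv) binary biproducts exist.

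For (i) and (ii), note that by construction $\Hom[\Barc](I_\alpha, J_\beta)$ is either $\cor$ or $0$, hence a $\cor$-vector space in either case. Then
\eqn
\Hom[\Barc]((A,I),(B,J)) = \prod_{(\alpha,\beta)\in A\times B}\Hom[\Barc](I_\alpha,J_\beta)
\eneqn
is a product of $\cor$-vector spaces, hence is a $\cor$-vector space and in particular an abelian group. The composition formula~\eqref{eq:hombarcomp} is a sum of products of elements of $\cor$, hence $\cor$-bilinear in $(u,v)$ once we have checked that the sums involved are finite; this finiteness was already verified just after~\eqref{eq:hombarcomp}.

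For (iii), the plan is to take the empty barcode $0\eqdot (\emptyset,\emptyset)$ (which trivially satisfies~\eqref{eq:bar1}). For any barcode $(B,J)$, both $\Hom[\Barc](0,(B,J))$ and $\Hom[\Barc]((B,J),0)$ are empty products indexed by $\emptyset\times B$ or $B\times\emptyset$, hence the zero group $\{0\}$. Thus $0$ is both initial and terminal, i.e.\ a zero object.

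For (iv), given two barcodes $(A,I)$ and $(B,J)$, I would define their biproduct as $(A\sqcup B, I\sqcup J)$, where the family is the obvious one indexed by the disjoint union. The condition~\eqref{eq:bar1} is preserved under disjoint unions, since for a compact $K\subset\R$ the set of indices $\alpha\in A\sqcup B$ with $(I\sqcup J)_\alpha\cap K\neq\emptyset$ is the disjoint union of the corresponding finite subsets of $A$ and $B$. The structural morphisms $\iota_A,\iota_B,\pi_A,\pi_B$ are defined by the ``identity matrix'' entries (the element $1\in\cor$ for pairs $(\alpha,\alpha)$ with $\alpha\in A$, respectively $\alpha\in B$, and $0$ elsewhere), and the relations $\pi_A\iota_A=\id$, $\pi_B\iota_B=\id$, $\pi_A\iota_B=0$, $\pi_B\iota_A=0$, $\iota_A\pi_A+\iota_B\pi_B=\id$ follow directly from~\eqref{eq:hombarcomp}. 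This gives both a product and a coproduct structure, hence a biproduct, and finishes the verification. No step presents a real obstacle; the only point deserving care is checking that the local finiteness condition~\eqref{eq:bar1} is stable under the disjoint-union construction, which is immediate.
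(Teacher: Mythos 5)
Your proposal is correct and takes essentially the same approach as the paper: the zero object is the empty barcode and the biproduct of $(A,I)$ and $(B,J)$ is $(A\sqcup B, I\sqcup J)$. The paper simply names these two constructions and leaves the remaining verifications (that $\Hom$-sets are $\cor$-vector spaces, composition is bilinear, the local finiteness condition is stable, and the projection/inclusion identities hold) implicit, whereas you spell them out.
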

\begin{proof}
(i) The $0$-barcode is $0=(A,I)$ with $A=\emptyset$. 

\spa
(ii) Given two barcodes  $(A,I)$  and $(B,J)$, we set $(A,I)\oplus(B,J)=(A\sqcup B, I\sqcup J)$. 
\end{proof}

Then we define a functor  $\Psi\cl \Barc\to \mdrcg[\cor_\R]$ as follows. We set
\eq\label{eq:fctB}
\Psi(A,I)&=&\soplus_{\alpha\in A}\cor_{I_\alpha}.
\eneq
By~\eqref{eq:hombar5}, both $\Hom[\Barc](\intervCO{a,b},\intervCO{c,d})$ and 
$\Hom(\cor_{\intervCO{a,b}},\cor_{\intervCO{c,d}})$ are simultaneously $\cor$ or $0$. 
Hence,  we define 
$\Psi\cl \Hom[\Barc](\intervCO{a,b},\intervCO{c,d})
\to\Hom(\cor_{\intervCO{a,b}},\cor_{\intervCO{c,d}})$ 
as the identity of $\cor$. 

Note that $\Psi$ commutes with the composition of morphisms. 

Then we extend $\Psi$ by linearity: 
\eqn
\Psi\;\cl\;\Hom[\Barc]((A,I),(B,J))&=&
\prod_{\alpha\in A,\beta\in B}
\Hom[\Barc](I_\alpha,J_\beta)\\
&\isoto& \prod_{\alpha\in A,\beta\in B}
\Hom\bl\cor_{I_\alpha}\cor_{J_\beta}\br\\
&\simeq& \Hom(\soplus_{\alpha\in A}\cor_{I_\alpha},\soplus_{\beta\in B}\cor_{J_\beta})\,.
\eneqn
Here, we have used 
the fact that   the sum is locally finite. 

This construction shows that $\Psi$ is additive and fully faithful.
\begin{theorem}\label{th:B1}
The functor $\Psi\cl \Barc\to \mdrcg[\cor_\R]$  is an equivalence of additive categories. 
\end{theorem}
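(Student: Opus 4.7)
The plan is to use the fact, already established in the construction preceding the theorem, that $\Psi$ is additive and fully faithful, so the only remaining point is essential surjectivity. Given any $F \in \mdrcg[\cor_\R]$, I would first forget the $\gamma$-structure and apply Theorem~\ref{th:Gth2} to obtain a locally finite family of intervals $\{I_\alpha\}_{\alpha \in A}$ together with an isomorphism $F \simeq \soplus_{\alpha \in A}\cor_{I_\alpha}$. The local finiteness condition matches exactly the requirement~\eqref{eq:bar1}, so once I know each $I_\alpha$ has the correct form, the pair $(A,\{I_\alpha\})$ will be a genuine $\gamma$-barcode.

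The key step is thus to show that every $I_\alpha$ is $\gamma$-locally closed, i.e.\ of the form $\intervCO{a_\alpha,b_\alpha}$ with $-\infty\le a_\alpha<b_\alpha\le+\infty$. Since $F\in\Derb_{\gammac}(\cor_\R)$ with $\gamma=\R_{\le 0}$, one has $\musupp(F)\subset\R\times\R_{\ge0}$. Each $\cor_{I_\alpha}$ is a direct summand of $F$, and the microsupport of a direct sum equals the union of the microsupports of its summands, so every $\cor_{I_\alpha}$ is itself a $\gamma$-sheaf. A short case analysis of $\musupp(\cor_I)$ over the four types of bounded intervals (and their unbounded variants) shows that $\cor_I\in\Derb_{\gammac}(\cor_\R)$ precisely when $I=\intervCO{a,b}$; for any other interval, a covector $(x;\tau)$ with $\tau<0$ appears in the microsupport at one of the endpoints. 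Equivalently, and more cleanly, this is exactly the description of $\gamma$-locally closed subsets of $\R$ recalled just before Definition~\ref{def:barcode}, and one can shortcut the computation by invoking Theorem~\ref{th:eqvderbg}: $\cor_I$ lies in $\Derb_{\gammac}(\cor_\R)$ iff it descends to a sheaf on $\R_\gamma$, which forces $I$ to be locally closed in $\R_\gamma$.

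Combining these observations, $(A,\{I_\alpha\})$ is a $\gamma$-barcode and $F\simeq\Psi(A,\{I_\alpha\})$, giving essential surjectivity. I do not anticipate any real obstacle here, since the deep content is absorbed in Theorem~\ref{th:Gth2} (itself based on Guillermou's extension of Gabriel's theorem); the microsupport argument singling out the $\gamma$-admissible shapes of intervals is the only additional ingredient, and it is elementary.
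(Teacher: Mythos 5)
Your proposal matches the paper's own proof in structure and content: full faithfulness is built into the construction, and essential surjectivity follows by combining Theorem~\ref{th:Gth2} with the observation that $\musupp(\cor_{I_\alpha})\subset\R\times\gamma^{\circ a}$ forces each interval to have the shape $\intervCO{a,b}$. One small point of care: the statement that ``the microsupport of a direct sum equals the union of the microsupports of its summands'' is delicate for infinite (even locally finite) direct sums, but what you actually use — that a direct summand has microsupport contained in that of the ambient object — is always true and is exactly what is needed, so the argument stands.
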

\begin{proof}
By Theorem~\ref{th:Gth2},  there exist a locally finite family of intervals $\{I_\alpha\}_{\alpha \in A}$ 
such that $F \simeq \soplus_{\alpha \in A}\cor_{I_\alpha}$. 
Since $\musupp(\cor_{I_\alpha})\subset \R\times\gammac$, the intervals $I_\alpha$ are of the type
$[a,b[$ with $-\infty\leq a<b\leq+\infty$. 
\end{proof}

Note that
\eq\label{eq:supppsi}
&&\supp(\Psi(A,I))=\supp(A,I)\qt{for a barcode $(A,I)$.}
\eneq

Let us denote by  $\opb{\Psi}$ a   quasi-inverse of the functor $\Psi$. 
\begin{definition}\label{def:barcfct}
The barcode functor $\mdrcg[\cor_{\R}]\isoto\Barc$ is the functor $\opb{\Psi}$. 
\end{definition}

Definition~\ref{def:barcfct} extends to the derived category. Denote by   $ H^\scbul$ the functor  $F\mapsto\soplus_jH^j(F)$ from $\Derb(\cor_\R)$ to $(\md[\cor_\R])^{(\Z)}$. One still calls 
\eqn
&&\opb{\Psi}\circ H^\scbul\cl\Derb_{\rcg}(\cor_{\R})\to\Barc^{(\Z)}
\eneqn
 the barcode functor.

\begin{example}\label{exa:persi3}
Let  $M$ be a real analytic manifold  endowed with a subanalytic distance $d$, let $K\subset M$ be a compact subanalytic subset and let $f(x)=d(x,K)$. Then 
\eqn
&&\Gamma^+_f=\{(x,t);d(x,K)\leq t\}
\eneqn
and $f$ satisfies  hypothesis~\ref{hyp:persi1}.  The barcodes of $K$ are those of $\roim{f}\cor_{\Gamma^+_f}$, that is, the image of $\roim{f}\cor_{\Gamma^+_f}$ by the functor $\opb{\Psi}\circ H^\scbul$ of Definition~\ref{def:barcfct}.
\end{example}

A natural question is to extend the definition of  barcodes to $\R^n$ when $n>1$. For that purpose, it is natural to replace the cone $\{t\leq0\}$ of $\R$ with a closed convex proper cone $\gamma$ of $\BBV$ as in subsection~\ref{subsection:gtop}.

\section{Sheaves on vector spaces}\label{section:dist}
In this section, we denote by $\BBV$ a real vector space of finite dimension $n$.

We endow $\BBV$ with a  Euclidean structure and denote  by $\vvert\cdot\vvert$ the norm on $\BBV$. 
Let us denote by $B_a$ the closed ball with center  the origin
and radius  $a\ge0$ :
\eqn
&&B_a\seteq\set{x\in\BBV}{\Vert x\Vert\le a}. 
\eneqn

\subsection{Convolution}
References for this subsection are made to~\cites{Ta08,GS14}.

Consider the maps 
\eqn
&&s\cl\BBV\times\BBV\to\BBV,\quad s(x,y)=x+y,\\
&&q_i\cl\BBV\times\BBV\to\BBV\ (i=1,2), \quad q_1(x,y)=x,\ q_2(x,y)=y.
\eneqn
Recall that $a\cl\BBV\to\BBV$ denotes the antipodal map, $x\mapsto -x$. For a sheaf $F$, we  set 
for short $F^a=\opb{a}F$. 

One defines the convolution functor $\star\cl \Derb(\cor_\BBV)\times \Derb(\cor_\BBV)\to\Derb(\cor_\BBV)$ and its adjoint 
$\rhomc\cl (\Derb(\cor_\BBV))^\rop\times \Derb(\cor_\BBV)\to\Derb(\cor_\BBV)$ by the formulas:
\eqn
F\star G&\eqdot&\reim{s}(F\etens G),\\
\rhomc(F,G)&\eqdot& \roim{q_1}\rhom(\opb{q_2}G, \epb{s}F)\\
&\simeq&\roim{s}\rhom(\opb{q_2}(G^a), \epb{q_1}F).
\eneqn
For $F_1,F_2,F_3\in\Derb(\cor_{\BBV})$, we have
\eq\label{eq:adjstarhomc}
&&\RHom(F_1\star F_2,F_3)\simeq\RHom(F_1,\rhomc(F_2,F_3)).
\eneq

With $\star$ as a tensor product,
$\Derb(\cor_\BBV)$ is a commutative monoidal category with a unit object $\cor_{\{0\}}$:
$\cor_{\{0\}}\star F\simeq F$ functorially in $F\in\Derb(\cor_{\BBV})$.

The functors $\star$ and $\rhomc$ induce functors (see~\cite{GS14}*{Cor.~3.1.4})
\eqn
\scbul\star\scbul&\cl& \Derb(\cor_\BBV)\times \Derb_{\gammac}(\cor_\BBV)\to\Derb_{\gammac}(\cor_\BBV),\\[1ex]
\rhomc(\scbul,\scbul)&\cl& (\Derb(\cor_\BBV))^\rop\times \Derb_{\gammac}(\cor_\BBV)\to\Derb_{\gammac}(\cor_\BBV),\\
&\cl& (\Derb_{\gammac}(\cor_\BBV))^\rop\times \Derb(\cor_\BBV)\to\Derb_{\gammac}(\cor_\BBV).
\eneqn

We set $X=\R\times\BBV$ and denote by $t$ the coordinate on $\R$. 
Following~\cite{GKS12}*{Exa~3.10} we recall that there exists $K\in\Derb_\Rc(\cor_X)$ unique up to  isomorphism such that
\eqn
&&\musupp(K)\subset \{(t,x;\tau,\xi);\tau=\vvert \xi\vvert\not=0,\;
 x=-t\tau^{-1}\xi\}\cup\{\tau=\xi=0\},\\
&&K\vert_{t=0}\simeq\cor_{\{0\}}.
\eneqn
Moreover, there is a distinguished triangle
\eqn
&&\cor_{\{ t<-\Vert x\Vert\}}\,[n]\to K\to 
\cor_{\{\Vert x\Vert\leq t\}}\to[+1].
\eneqn
Set $K_a=K\vert_{t=a}$.  Hence 
\eq\label{eq:Ka}
K_a\simeq 
\begin{cases}\cor_{\{\Vert x\Vert\leq a\}}&\text{for $a\geq0$,}\\
\cor_{\{\Vert x\Vert<-a\}}[n]&\text{for $a<0$.}\end{cases}
\eneq
We can easily check the isomorphism
\eq\label{eq:KaKb}
&&K_a\star K_b\simeq K_{a+b}\mbox{ for }a,b\in\R.
\eneq
Hence, $K_a\star$ is an auto-equivalence of the category $\Derb(\cor_\BBV)$ as well as of
the category $\Derb_{\rcg}(\cor_\BBV)$. A quasi-inverse is given by $K_{-a}\star$. 

Note that 
\eq
&&\Hom(K_a\star F,G)\simeq \Hom(F,K_{-a}\star G)
\eneq
and thus
\eqn
&&K_a\star F\simeq\rhomc(K_{-a},F).
\eneqn
For $c\geq0$ we have a canonical morphism
$K_c\to\cor_{\{0\}}$, which induces 
canonical morphisms  for $a\geq b$:
\eqn
\chi_{b,a}&\cl& K_a\To K_b,\\
\chi_{b,a}\star F&\cl&
K_a\star F\to K_b\star F.
\eneqn
 In particular, one has
\eq
&&K_a\star F\To[\;\chi_{0,a}\star F\;] F\To[\;\chi_{b,0}\star F\;] K_b\star F\quad\text{for $a\ge0\ge b$.}
\eneq

Moreover, one has  (recall that $\RD_\BBV$ denotes the duality functor, 
see \eqref{eq:dualfct}) :
\eq
&&\RD_\BBV(K_a)\simeq K_{-a}.
\eneq

\begin{lemma}\label{le:dualconv}
For $F\in\Derb(\cor_\BBV)$, one has
\eq
&&\RD_\BBV(K_a\star F)\simeq K_{-a}\star\RD_\BBV(F).
\eneq
\end{lemma}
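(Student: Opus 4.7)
The plan is to push the duality through the $\reim{s}$ appearing in the definition of convolution, apply the Künneth isomorphism for the dualizing functor to the external tensor product, and then swap $\roim{s}$ back to $\reim{s}$ by exploiting the fact that $K_{-a}$ has compact support.

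In more detail, the first step is to rewrite $\RD_\BBV$ of a proper direct image. Starting from the standard identity $\rhom(\reim{s}F,G)\simeq\roim{s}\rhom(F,\epb{s}G)$ and specializing $G=\omega_\BBV$ (so that $\epb{s}\omega_\BBV=\omega_{\BBV\times\BBV}$), one gets $\RD_\BBV\reim{s}\simeq\roim{s}\,\RD_{\BBV\times\BBV}$. Applied to $K_a\etens F$ this yields
\eqn
&&\RD_\BBV(K_a\star F)\simeq\roim{s}\,\RD_{\BBV\times\BBV}(K_a\etens F).
\eneqn

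The second step is Künneth for duality. Since $K_a\in\Derb_\Rc(\cor_\BBV)$ by its very definition, $K_a$ is cohomologically constructible, and the formula $\RD_{\BBV\times\BBV}(K_a\etens F)\simeq\RD_\BBV K_a\etens\RD_\BBV F$ applies (cf.~\cite{KS90}*{\S3.4}). Combining this with the already-stated identity $\RD_\BBV K_a\simeq K_{-a}$ gives
\eqn
&&\RD_\BBV(K_a\star F)\simeq\roim{s}\bl K_{-a}\etens\RD_\BBV F\br.
\eneqn

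For the third step, observe from \eqref{eq:Ka} that $\supp(K_{-a})$ is the closed ball of radius $\vert a\vert$, hence compact. Therefore the addition map $s$ is proper on $\supp(K_{-a}\etens\RD_\BBV F)\subset\supp(K_{-a})\times\supp(\RD_\BBV F)$: the preimage of any compact $C\subset\BBV$ meets this support in a subset of $\supp(K_{-a})\times(C-\supp(K_{-a}))$, which has compact intersection with every fiber. Hence $\roim{s}$ and $\reim{s}$ coincide on $K_{-a}\etens\RD_\BBV F$, giving $\roim{s}(K_{-a}\etens\RD_\BBV F)\simeq\reim{s}(K_{-a}\etens\RD_\BBV F)=K_{-a}\star\RD_\BBV F$, as desired.

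The only delicate ingredient is the Künneth formula for $\RD$ in step two, which requires cohomological constructibility of one factor; this is immediate here because $K_a$ is $\R$-constructible, so no further work is needed.
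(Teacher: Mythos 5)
Your proof is correct and follows essentially the same route as the paper: unfold $\RD_\BBV\reim{s}$ via the $(\reim{s},\epb{s})$ adjunction and $\epb{s}\omega_\BBV\simeq\omega_{\BBV\times\BBV}$, apply the Künneth formula for duality using constructibility of $K_a$, substitute $\RD_\BBV K_a\simeq K_{-a}$, and pass back from $\roim{s}$ to $\reim{s}$ using that $\supp(K_{-a})$ is compact (so $s$ is proper on the relevant support). The only differences are cosmetic — you spell out the cohomological-constructibility hypothesis for the Künneth step and the properness check in slightly more detail than the paper, which compresses all of this into one displayed chain and a one-line remark.
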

\begin{proof}
One has 
\eqn
\RD_\BBV(K_a\star F)&\simeq&\rhom(\reim{s}(K_a\etens F),\omega_\BBV)\\
&\simeq&\roim{s}\rhom(K_a\etens F,\epb{s}\omega_\BBV)\\
&\simeq&\roim{s}\rhom(K_a\etens F,\omega_{\BBV\times\BBV})\\
&\simeq&\roim{s}(\RD_\BBV(K_a)\etens\RD_\BBV F)\simeq K_{-a}\star \RD_\BBV F.
\eneqn
Note that the last isomorphism follows from the fact that $K_a$ has a compact support.
\end{proof}

\subsection{Distance and the stability theorem}
A distance on persistence modules was first introduced  in~\cite{CCGGO09} (see also \cite{CSGO16}), generalized to higher dimension in~\cites{Le15,LW15} and adapted to sheaves in~\cite{Cu13}. This distance is based on an idea of thickening the open sets. Note that the idea of thickening constructible functions already appeared in~\cite{Sc91}, a paper inspired by~\cite{GRS83}. However, the idea of a distance in this framework is new as well as the so-called stability results, meaning a control of the distance after direct images. 

Here we also introduce a thickening of sheaves and a pseudo-distance for sheaves and we prove a stability result. Our construction is  based on the convolution of sheaves {\em in the derived setting} and therefore our results are essentially of a different nature from those mentioned above.

\begin{definition}\label{def:convdist}
Let $F, G \in \Derb(\cor_{\BBV})$ and let $a\geq0$.
One says that $F$ and $G$ are {\em $a$-isomorphic} if there are morphisms 
$f\cl K_{a}\star F\to G$ and $g\cl K_a\star G\to F$ 
which satisfies the following compatibility conditions:
the composition 
$K_{2a}\star F\To[K_a\star f] K_a\star G\To[g] F$
 coincides with  the natural morphism $\chi_{0,2a}\star F\cl
K_{2a}\star F\to F$ and the composition 
$K_{2a}\star G\To[K_a\star g] K_a\star F\To[f] G$
coincides with  the natural morphism $\chi_{0,2a}\star G\cl K_{2a}\star G\to G$. 
 \end{definition}

Note that if $F$ and $G$ are $a$-isomorphic, then
they are $b$-isomorphic for any $b\ge a$. 

One sets 
\eqn&&\dist(F,G)=\inf\Bigl(\{+\infty\}\cup\{a\in\R_{\geq0}\,;\,
\text{$F$ and $G$ are $a$-isomorphic}\}\Bigr)
\eneqn
and calls $\dist(\scbul,\scbul)$ the \define{convolution distance}.

Note that for $F,G,H\in \Derb(\cor_{\BBV})$, 
 \begin{itemize}
 \item $F$ and $G$ are $0$-isomorphic
if and only if $F\simeq G$,
 \item
 $\dist(F,G)=\dist(G,F)$, 
 \item
 $\dist(F,G)\leq \dist(F,H)+\dist(H,G)$.
 \end{itemize}

\Rem\label{rem:notdist}
We don't know if $F$ and $G$ are $a$-isomorphic when $\dist(F,G)\le a$.
\enrem

\begin{example}\label{ex:aisom}
Let $F\in\Derb(\cor_\BBV)$ and assume that $\supp(F)\subset B_a$. 
Set $L\eqdot \rsect(\BBV;F)$.
Recall that, for a closed subset $S$ of $\BBV$, one denotes by
$L_S\in\Derb(\cor_\BBV)$ the constant sheaf with stalk $L$ on $S$ extended by $0$ on $\BBV\setminus S$.  Let $G\eqdot L_{\{0\}}$. 
We have
\eqn
&&K_a\star G\simeq L_{B_a}.
\eneqn
Denote by $q\cl\BBV\to\rmpt$ the unique map from $\BBV$ to $\rmpt$. The morphism $\opb{q}\roim{q}F\to F$ defines the map
$L_\BBV\to F$ and $F$ being supported by $B_a$, we get the morphism 
$g\cl K_a\star G\simeq  L_{B_a}\to F$. 
On the other hand,we have $(K_a\star F)_0\simeq L$ which defines  $f\cl K_a\star F\to G$.
One easily checks that $f$ and $g$ satisfy the compatibility conditions in Definition~\ref{def:convdist}. Therefore
\eqn
&&\dist(F,L_{\{0\}})\leq a.
\eneqn
In particular, a non-zero object can be $a$-isomorphic to
the zero object. 
\end{example}

\Rem
\bnum
\item
If $\dist(F,G)<+\infty$, then we have
$$\rsect(\BBV; F)\simeq\rsect(\BBV; G)
\qtq\rsectc(\BBV; F)\simeq\rsectc(\BBV; G).$$
 This follows from
$\rsect(\BBV; K_a\star F)\isoto\rsect(\BBV; F)$
and $\rsectc(\BBV; K_a\star F)\isoto\rsectc(\BBV; F)$
 for $a\ge0$. 

Together with Example~\ref{ex:aisom}, for $F,G\in \Derb(\cor_\BBV)$
with $\supp(F),\;\supp(G)\subset B_a$, 
the condition $\dist(F,G)\le 2a$ holds if and only if
$\rsect(\BBV; F)\simeq\rsect(\BBV; G)$.

\item 
Let $A$ and $B$ be closed convex subsets of $\BBV$.
Then $\dist(\cor_A,\cor_B)\le a$ if and only if
$A\subset B+B_a$ and $B\subset A+B_a$.

Indeed, we may assume that $A$ and $B$ are non-empty.
If   $\cor_A$ and $\cor_B$ are $a$-isomorphic,
then there exists a non-zero morphism
$\cor_{A+B_a}\simeq K_a\star\cor_A\to\cor_B$.
Hence, we have $A+B_a\supset B$.
Similarly we have $B+B_a\supset A$.
The converse implication can be proved similarly. \label{remitem:2}
\item
Let $\gamma$ and $\gamma'$ be closed convex cones.
If $\dist(\cor_\gamma,\cor_{\gamma'})<+\infty$,
then one has $\gamma=\gamma'$.
This follows from \eqref{remitem:2}. 

\item 
Let $U$ and $V$ be open convex subsets of $\BBV$.
Then $\dist(\cor_U,\cor_V)\le a$ if and only if
$U\subset V+B_a$ and $V\subset U+B_a$.

This follows from \eqref{remitem:2},
$\RD_\BBV(\cor_U[n])\simeq\cor_{\ol{U}}$,
$\RD_\BBV(\cor_{\ol{U}})\simeq\cor_U[n]$
and Proposition~\ref{prop:distance}~\eqref{item:distance}
below. 
\item
Let $a\in\R$ and recall that the object $K_a$ satisfies~\eqref{eq:Ka}. Using the isomorphism~\eqref{eq:KaKb}, one easily checks that for $a\geq0$ one has
\eqn
\dist(K_{a},K_0)\leq a ,
 &&\dist(K_{-a},K_0)\leq a.
 \eneqn
 Since $K_0\simeq\cor_{\{0\}}$ and $K_{-a}$ is the $n$-shifted constant sheaf on the open ball with radius $a$, we get an example of two sheaves concentrated in different degrees whose distance is finite.
\ee
\end{remark}

\Prop\label{prop:distance}
\bnum
\item
If $\dist(F,G)\le a$, then
$\dist\bl\RD_\BBV(F),\RD_\BBV(G)\br\le a$.\label{item:distance}
\item
Assume that $\dist(F_i,G_i)\le a_i$ $(i=1,2)$.
Then one has
$$\dist(F_1\star F_2, G_1\star G_2)\le a_1+a_2\qtq
\dist\bl\rhomc(F_1, F_2),\rhomc(G_1,G_2)\br\le a_1+a_2.$$
\item
$\dist(\opb{\phig}\roim{\phig}F,\opb{\phig}\roim{\phig}G)\le
\dist(F,G)$.\label{item:3}
\ee
\enprop
\begin{proof}
(i) follows from Lemma~\ref{le:dualconv}.

\spa
(ii) Let $f_i\cl K_{a_i}\star F_i\to G_i$ ($i=1,2$). We get a map
\eqn
&&f_1\star f_2\cl K_{a_1}\star F_1\star K_{a_2}\star F_2\to G_1\star G_2
\eneqn
and $K_{a_1}\star F_1\star K_{a_2}\star F_2\simeq K_{a_1+a_2}\star(F_1\star F_2)$. The end of the proof is straightforward and the case of $\rhomc$ is similar.

\spa
(iii) follows from
$\opb{\phig}\roim{\phig}F\simeq
\roim{s}(\cor_{\gamma^a}\etens F)\simeq \rhomc(\cor_{\Int(\gamma)}[n],F)$ and (ii).
\end{proof}

For a set $X$ and a map $f\cl X\to\BBV$, one sets
\eqn
&&\vvert f\vvert=\sup_{x\in X}\vvert f(x)\vvert.
\eneqn

 \begin{theorem}[{The stability theorem}]\label{th:stab}
 Let $X$ be a locally compact space and  let
$f_1,f_2\cl X\to\BBV$ be two continuous maps.
Then,  for any $F\in\Derb(\cor_X)$, we have
$$\dist(\roim{f_1}F,\roim{f_2}F)\leq \vvert f_1-f_2\vvert\qtq \dist(\reim{f_1}F,\reim{f_2}F)\leq\vvert f_1-f_2\vvert.$$
\end{theorem}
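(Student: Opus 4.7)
The plan is to construct, for $a\eqdot\vvert f_1-f_2\vvert$, morphisms $f\cl K_a\star\reim{f_1}F\to\reim{f_2}F$ and $g\cl K_a\star\reim{f_2}F\to\reim{f_1}F$ satisfying the compatibility of Definition~\ref{def:convdist}, so that $\reim{f_1}F$ and $\reim{f_2}F$ are $a$-isomorphic. The case of $\roim{}$ will be handled in parallel.

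The key geometric reformulation is as follows. Let $p\cl X\times B_a\to X$ be the projection and let $\mu_i\cl X\times B_a\to\BBV$ send $(x,v)$ to $f_i(x)+v$. First I would establish the natural isomorphism
\eqn
&&K_a\star\reim{f_i}F\simeq\reim{\mu_i}\opb{p}F
\eneqn
by combining proper base change for $\reim{}$ (applicable since $\cor_{B_a}$ has compact support in $\BBV$) with the projection formula. Since $\vvert f_1-f_2\vvert\le a$, the map $\sigma_{ij}\cl X\to X\times B_a$, $x\mapsto(x,f_j(x)-f_i(x))$, is a well-defined closed embedding for $\{i,j\}=\{1,2\}$, satisfying $p\circ\sigma_{ij}=\id_X$ and $\mu_i\circ\sigma_{ij}=f_j$. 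The adjunction morphism $\opb{p}F\to\roim{\sigma_{ij}}F=\reim{\sigma_{ij}}F$, pushed forward along $\reim{\mu_i}$, defines $f$ and $g$.

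The main obstacle is the verification of the compatibility $g\circ(K_a\star f)=\chi_{0,2a}\star\reim{f_1}F$ and its symmetric counterpart. Using the isomorphism $K_a\star K_a\simeq K_{2a}$ of \eqref{eq:KaKb}, realized via the sum map $B_a\times B_a\to B_{2a}$, I would identify $K_{2a}\star\reim{f_1}F\simeq\reim{\mu_1^{(a,a)}}\opb{p^{(a,a)}}F$, where $\mu_1^{(a,a)}\cl X\times B_a\times B_a\to\BBV$ is $(x,v,w)\mapsto f_1(x)+v+w$ and $p^{(a,a)}$ is the projection to $X$. By naturality of base change and adjunction, the composition $g\circ(K_a\star f)$ is then obtained by applying $\reim{\mu_1^{(a,a)}}$ to the adjunction along the closed embedding
\eqn
&&\tau\cl X\to X\times B_a\times B_a,\quad x\mapsto\bl x,\,f_2(x)-f_1(x),\,f_1(x)-f_2(x)\br.
\eneqn
Since the sum of the last two coordinates of $\tau(x)$ vanishes, $\tau$ factors through the zero section $X\to X\times B_{2a}$ under the sum map; but this zero-section adjunction is precisely the construction of $\chi_{0,2a}\star\reim{f_1}F$ via the identification $K_{2a}\star\reim{f_1}F\simeq\reim{\mu_1^{(2a)}}\opb{p^{(2a)}}F$. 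The symmetric compatibility is obtained identically.

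For the $\roim{}$ case, I would either rerun the entire argument---the base change isomorphism $K_a\star\roim{f_i}F\simeq\roim{\mu_i}\opb{p}F$ still holds because the compactness of $B_a$ renders the projection $B_a\times\BBV\to\BBV$ proper, making the required commutation with $\opb{(\cdot)}$ valid---or else derive it from the $\reim{}$ case by Verdier duality, combining Lemma~\ref{le:dualconv} with $\RD_\BBV\circ\reim{f_i}\simeq\roim{f_i}\circ\RD_X$.
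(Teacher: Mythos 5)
Your construction is essentially the paper's: the paper works with the thickened graphs $S_i=\{(x,y)\in X\times\BBV;\Vert y-f_i(x)\Vert\le a\}$ and the inclusions $G_2\subset S_1$, $G_1\subset S_2$, $S_2\subset S'_1$, which is exactly your picture transported along the homeomorphism $X\times B_a\isoto S_i$, $(x,v)\mapsto (x,f_i(x)+v)$; the morphisms and the compatibility check via the $2a$-thickening (your factorization of $\tau$ through the zero-section under $B_a\times B_a\to B_{2a}$) coincide with the paper's composition $\cor_{S'_1}\to\cor_{S_2}\to\cor_{G_1}$.

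Two caveats on your treatment of the $\roim{}$ case, which the paper handles directly (it proves $\roim{}$ and says $\reim{}$ is similar). First, the Verdier-duality fallback does not work at this level of generality: the theorem is for arbitrary $F\in\Derb(\cor_X)$ on an arbitrary locally compact $X$, and $\RD_X\RD_X F\not\simeq F$ in general (already on a point for an infinite-dimensional stalk), so knowing that $\reim{f_1}(\RD_X F)$ and $\reim{f_2}(\RD_X F)$ are $a$-isomorphic and dualizing via Lemma~\ref{le:dualconv} only controls $\roim{f_i}\RD_X\RD_X F$, not $\roim{f_i}F$ (besides, $\epb{}$ requires finiteness of cohomological dimension not assumed here). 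So you must take the ``rerun'' route. Second, within that route, the isomorphism $K_a\star\roim{f_i}F\simeq\roim{\mu_i}\opb{p}F$ is true, but not for the reason you give: properness of $B_a\times\BBV\to\BBV$ only lets you replace $\reim{s}$ by $\roim{s}$ on sheaves supported in $B_a\times\BBV$; the remaining step is the base change $\opb{\pi}\roim{f_i}F\simeq\roim{(\id_{B_a}\times f_i)}\opb{p}F$ for the projection $\pi\cl B_a\times\BBV\to\BBV$, and pullback along a proper map does \emph{not} commute with $\roim{}$ in general (a closed embedding already gives a counterexample). What makes it work is a homotopy-invariance/Vietoris--Begle argument using that small slices of $B_a$ are contractible --- the same point the paper leaves implicit when asserting $K_a\star\roim{f_i}F\simeq\roim{p_2}(\cor_{S_i}\tens\opb{p_1}F)$. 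With these corrections your argument is the paper's proof in slightly different coordinates.
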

\Proof
(i) Since the proofs for $\roim{f}$ and $\reim{f}$ are similar, we shall only treat the case of $\roim{f}$.

\spa
(ii) Set $a=\vvert f_1-f_2\vvert$. Let us show that 
$\roim{f_1}F$ and $\roim{f_2}F$ are $a$-isomorphic. 
Let us introduce some notations. 
Let $p_1\cl X\times\BBV\to X$ and 
$p_2\cl X\times\BBV\to \BBV$ be the projections.
We set
\eqn
G_i&=&\set{(x,f_i(x))\in X\times\BBV}{x\in X},\\
S_i&=&\set{(x,y)\in X\times\BBV}{\Vert y-f_i(x)\Vert\le a},\\
S'_i&=&\set{(x,y)\in X\times\BBV}{\Vert y-f_i(x)\Vert\le 2a}.
\eneqn for $i=1,2$.
Then we have
\eqn
\roim{f_i}F&\simeq&\roim{p_2}\bl\cor_{G_i}\tens \opb{p_1}F\br,\\
K_a\star\roim{f_i}F&\simeq&\roim{p_2}\bl\cor_{S_i}\tens \opb{p_1}F\br,\\
K_{2a}\star\roim{f_i}F&\simeq&\roim{p_2}\bl\cor_{S'_i}\tens \opb{p_1}F\br.
\eneqn
Then $G_2\subset S_1$ induces a morphism $\cor_{S_1}\to\cor_{G_2}$, which induces
$K_a\star\roim{f_1}F\to \roim{f_2}F$.
Similarly, we construct
$K_a\star\roim{f_2}F\to \roim{f_1}F$.
Then, the composition
$$K_{2a}\star\roim{f_1}F\to K_a\star\roim{f_2}F\to \roim{f_1}F$$
is the canonical one, since these morphisms are induced by
$$\cor_{S'_1}\to\cor_{S_2}\to\cor_{G_1}.$$
The same argument holds when exchanging $f_1$ and $f_2$.
\QED

\subsection{\PL-sheaves}\label{subsection:PL}
\begin{definition}\label{def:polytope}
A {\em convex polytope} $P$ in $\BBV$ is 
the intersection of a finite family of open or closed affine half-spaces. 
\end{definition}
From now on, we shall also assume that  $\gamma$ is  polyhedral, that is, 
 \eq\label{hyp1b}
&&\parbox{75ex}{\em{$\gamma$ is a  closed convex proper polyhedral cone with non-empty interior. }
}\eneq

\begin{definition}\label{def:Flinear}
One says that $F\in \Derb_{\Rc}(\cor_\BBV)$ is   \PL\ (piecewise linear) 
if there exists a locally finite family $\{P_a\}_{a\in A}$ of
convex polytopes such that  $\BBV=\bigcup_{a\in A}P_a$ and 
$F\vert_{P_a}$ is constant for any $a\in A$.
\end{definition}

We shall use the notations (for a cone $\gamma$ satisfying~\eqref{hyp1b}):
\eq\label{not:2}
&&\left\{\begin{array}{l}
\Derb_{\rcl}(\cor_{\BBV})\eqdot\{F\in\Derb_\Rc(\cor_{\BBV}) ;\mbox{ $F$ is \PL}\},\\[1ex]
\Derb_{\rclg}(\cor_{\BBV})\eqdot\Derb_{\rcl}(\cor_{\BBV})\cap\Derb_{\gammac}(\cor_{\BBV}),\\[1ex]
\mdrcl[\cor_{\BBV}]\eqdot\mdrc[\cor_{\BBV}]\cap \Derb_{\rcl}(\cor_\BBV),\\[1ex]
\mdrclg[\cor_{\BBV}]\eqdot\mdrcl[\cor_\BBV]\cap\mdg[\cor_{\BBV}].
\end{array}\right.
\eneq

Since the following  result  is easy to prove, we omit the proof.
\begin{theorem}\label{th:PL}
 The category $\Derb_{\rcl}(\cor_\BBV)$ is triangulated. Moreover:
\bnum
\item If $F$ and $F'$ are \PL, then so are $F\tens F'$ and $\rhom(F,F')$.
\item Let $f\cl \BBV\to\BBV'$ be a linear map.
\bna
\item If $F'$ is a PL\ sheaf on $\BBV'$, then $\opb{f}F'$
is a \PL\ sheaf on $\BBV$.
\item If $F$ is a \PL\ sheaf on $\BBV$ and $\supp(F)$ is proper over $\BBV'$,
then $\roim{f}F$ is a \PL\ sheaf on $\BBV'$.
\ee
\item 
Assume~\eqref{hyp1b}. If  $F$ is 
\PL\ and $\supp(F)$ is $\gamma$-proper \lp see {\rm Definition~\ref{def:loclo}}\rp,
then $\opb{\phig}\roim{\phig}F$ is \PL.
\enum
\end{theorem}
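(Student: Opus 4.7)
The plan is to treat the three items in order of increasing technicality, with part~(3) reducing to part~(2)(b) via a formula from the proof of Proposition~\ref{prop:distance}. The guiding principle throughout is that the class of convex polytopes is stable under finite intersection and under inverse images by linear maps, so at each step I would construct a locally finite polytope stratification explicitly adapted to the output sheaf, and then verify constancy on strata.

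For~(1), given adapted stratifications $\{P_a\}_{a\in A}$ and $\{P'_b\}_{b\in B}$ for $F$ and $F'$, the common refinement $\{P_a\cap P'_b\}$ is again a locally finite family of convex polytopes covering $\BBV$ (a finite intersection of half-spaces remains a finite intersection of half-spaces, and local finiteness is preserved by common refinement). On each $P_a\cap P'_b$ both $F$ and $F'$ restrict to constant sheaves with values in $\Derb(\cor)$, and so do $F\tens F'$ and $\rhom(F,F')$. For~(2)(a), the preimage of an open or closed affine half-space under a linear map $f\cl\BBV\to\BBV'$ is empty, all of $\BBV$, or an affine half-space of the same type, hence $\opb{f}$ sends convex polytopes to convex polytopes. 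Local finiteness is preserved since, for any compact $K\subset\BBV$, the image $f(K)$ is compact and meets only finitely many $P'_b$; then $\opb{f}F'$ is constant on each $\opb{f}(P'_b)$.

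Part~(2)(b) is the main obstacle. I would factor $f=j\circ g$ with $g\cl\BBV\twoheadrightarrow f(\BBV)$ a surjective linear map and $j\cl f(\BBV)\hookrightarrow\BBV'$ a linear embedding. The embedding case is handled by extending a polytope stratification from $f(\BBV)$ to $\BBV'$, e.g.\ by taking the product with a stratification of a linear complement. For the surjective case, fix a splitting $\BBV\simeq\BBV'\oplus K$ with $K=\ker g$. The strategy is to refine $\{P_a\}$ to a (locally finite) simplicial polytope complex adapted to $F$ and compatible with $g$, in the sense that each closed simplex maps linearly and surjectively onto a simplex of a compatible complex on $\BBV'$. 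On each open image simplex the combinatorial incidence structure of the fibers $\opb{g}(y)\cap P_a$ is constant in $y$, and by proper base change together with the properness of $\supp(F)$ over $\BBV'$, the cohomology of the fiber is then locally constant, which gives the desired $\PL$ structure on $\roim{g}F$. The hard part is the combinatorial input---the simultaneous subdivision of a locally finite polytope complex along a linear surjection---which is where one invokes standard PL topology (the theory of subdivisions of polyhedral complexes, essentially a relative version of Hardt's theorem in the polyhedral setting).

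For~(3), from the proof of Proposition~\ref{prop:distance}\eqref{item:3} one has the isomorphism
\[
\opb{\phig}\roim{\phig}F\simeq\roim{s}(\cor_{\gamma^a}\etens F),
\]
where $s\cl\BBV\times\BBV\to\BBV$ is the (linear) sum map. Since $\gamma$ is polyhedral, $\cor_{\gamma^a}$ is $\PL$, and the product of two locally finite polytope stratifications is again one, so $\cor_{\gamma^a}\etens F$ is $\PL$ on $\BBV\times\BBV$ by the same argument as in~(1). Its support $\gamma^a\times\supp(F)$ is proper over $\BBV$ under $s$ precisely by the $\gamma$-properness assumption on $\supp(F)$ in Definition~\ref{def:loclo}. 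Applying~(2)(b) to $s$ yields that $\opb{\phig}\roim{\phig}F$ is $\PL$, completing the proof.
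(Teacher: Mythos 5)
The paper itself omits the proof, so the only question is whether your argument stands on its own; the step that fails is the $\rhom$ half of part~(1). Constancy of $F$ and $F'$ on the pieces of the common refinement does \emph{not} imply constancy of $\rhom(F,F')$ there, because $\rhom$ does not commute with restriction to a locally closed (non-open) subset. Concretely, on $\BBV=\R$ take $F=\cor_{[0,+\infty[}$ with adapted cover $\{\,]-\infty,0[\,,\,[0,+\infty[\,\}$ and $F'=\cor_{]-\infty,1[}$ with adapted cover $\{\,]-\infty,1[\,,\,[1,+\infty[\,\}$. The common refinement contains the polytope $[0,1[$, but $\rhom(F,F')\simeq R\Gamma_{[0,+\infty[}(\cor_{]-\infty,1[})\simeq\cor_{]0,1[}$, whose stalk is $0$ at $0$ and $\cor$ on $]0,1[$; it is not constant on $[0,1[$. (Your tensor-product claim is fine, since $\tens$ is computed stalkwise, and the same contrast explains why the cone of a morphism of PL sheaves \emph{is} handled by the refinement argument: restriction to a convex, hence contractible, piece is triangulated and morphisms of constant sheaves there are constant --- a point you should spell out anyway, since the assertion that $\Derb_{\rcl}(\cor_\BBV)$ is triangulated is part of the statement and is never addressed in your proposal.) To repair the $\rhom$ case you need a strictly finer stratification, e.g.\ the cell decomposition induced by the locally finite arrangement of all affine hyperplanes supporting the faces of the $P_a$ and $P'_b$; equivalently, argue microlocally: a sheaf is $\PL$ if and only if its microsupport lies in a locally finite union of conormals to affine subspaces, and then the estimate $\musupp(\rhom(F,F'))\subset\musupp(F')\mathbin{\widehat{+}}\musupp(F)^a$ of \cite{KS90}*{Prop.~5.4.14}, together with local constancy on the cells of the induced arrangement (\cite{KS90}*{\S\,8.4}), gives the conclusion.

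The remaining parts are essentially sound: (2)(a) and the reduction of (3) to (2)(b) via $\opb{\phig}\roim{\phig}F\simeq\roim{s}(\cor_{\gamma^a}\etens F)$ and the identification of $\gamma$-properness of $\supp(F)$ with properness of $s$ on $\gamma^a\times\supp(F)$ are correct. For (2)(b) your strategy (compatible polyhedral subdivision making the linear surjection simplicial on a neighborhood of $\supp(F)$, then proper base change and homotopy invariance along the trivialized fibers over each open image cell) is the standard one, but the entire content sits in the compatible-subdivision lemma, which you only cite; alternatively the microsupport route above ($\musupp(\roim{f}F)$ estimated by pushing forward a polyhedral conic set under a linear map, legitimate since $f$ is proper on $\supp(F)$) avoids the combinatorics altogether and would make the whole proof uniform across (1)--(3).
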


\subsection{The approximation theorem}

\begin{theorem}[{The approximation theorem}]\label{th:approx}
Let  $F\in\Derb_{\Rc}(\cor_\BBV)$. For each $\epsilon>0$ there exists 
$G\in \Derb_{\rcl}(\cor_\BBV)$ such that $\dist(F,G)\leq\epsilon$
and $\supp(G)\subset \supp(F)+B_\eps$. 
\end{theorem}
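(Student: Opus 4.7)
My approach is to construct a subanalytic homeomorphism $h \cl \BBV \to \BBV$, within $\epsilon$ of the identity in sup norm, such that the direct image $G := \roim{h}\,F$ is piecewise linear, and then conclude by stability. Once such an $h$ is constructed, Theorem~\ref{th:stab} applied to the pair $(\id,h) \cl \BBV \to \BBV$ with the sheaf $F$ yields
\[
\dist(F,G) = \dist(\roim{\id}\,F,\roim{h}\,F) \leq \vvert h - \id\vvert < \epsilon,
\]
while the fact that $h$ is a homeomorphism moving points by less than $\epsilon$ gives $\supp(G) = h(\supp F) \subset \supp(F) + B_\epsilon$. Thus the entire problem reduces to producing such an $h$.

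The construction of $h$ relies on the triangulation theorem for subanalytic sets (see e.g.\ \cite{KS90}*{\S8.2}). Since $F$ is $\R$-constructible, fix a locally finite subanalytic stratification $\BBV = \bigsqcup_\alpha S_\alpha$ on whose strata the cohomology sheaves of $F$ are locally constant. The triangulation theorem produces a locally finite PL simplicial complex $K$ with $|K| = \BBV$ together with a subanalytic homeomorphism $\phi \cl \BBV \to \BBV$ mapping each open simplex of $K$ into a single stratum $S_{\alpha(\sigma)}$. By iterated barycentric subdivision, arrange that every closed simplex of $K$ has diameter less than $\epsilon/3$. By post-composing $\phi$ with a PL homeomorphism returning each vertex image $\phi(v)$ back to $v$, one may assume $\phi$ is the identity on the $0$-skeleton; then for each closed simplex $\bar\sigma$, the sets $\bar\sigma$ and $\phi(\bar\sigma)$ share their vertices and both lie in a small ball, whence $\vvert \phi - \id \vvert < \epsilon$. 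Put $h = \phi^{-1}$.

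Since $h$ is a subanalytic homeomorphism, $G := \roim{h}\,F = \opb{\phi}\,F$ lies in $\Derb_\Rc(\cor_\BBV)$. For any open simplex $\sigma$ of $K$, $G|_\sigma \simeq F|_{\phi(\sigma)}$ is constant: $\phi(\sigma)$ is connected (being homeomorphic to $\sigma$) and contained in a single stratum of $F$, on which the cohomology sheaves of $F$ are locally constant, hence actually constant on the connected subset $\phi(\sigma)$. Taking as the family $\{P_a\}$ of Definition~\ref{def:Flinear} the open simplices of $K$ (each is a locally closed convex polytope in the sense of Definition~\ref{def:polytope}), we obtain $G \in \Derb_{\rcl}(\cor_\BBV)$, and the distance and support bounds are those from the first paragraph.

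The principal obstacle is the geometric construction of $\phi$ quantitatively close to the identity: the standard subanalytic triangulation theorem asserts existence only up to some subanalytic homeomorphism, without any norm control. Shrinking simplex diameters by barycentric subdivision and then adjusting $\phi$ to fix the $0$-skeleton are both standard moves in subanalytic and PL topology, but they sit outside the machinery explicitly developed in the paper so far, and it is this geometric refinement where the real work lies—the sheaf-theoretic conclusion being essentially immediate from Theorem~\ref{th:stab}.
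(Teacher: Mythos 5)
Your overall strategy (triangulate, then invoke Theorem~\ref{th:stab}) is the same as the paper's, but your implementation has a genuine gap at exactly the point you flag, and the two moves you propose do not close it. First, barycentric subdivision of $K$ shrinks the simplices $\bar\sigma$ but gives no control whatsoever on their images $\phi(\bar\sigma)$: in dimension $\geq 2$ a homeomorphism of $\BBV$ can fix an arbitrarily fine set of vertices and still move interior points a long way, so the claim that ``$\bar\sigma$ and $\phi(\bar\sigma)$ share their vertices and both lie in a small ball'' is unjustified --- $\phi(\bar\sigma)$ need not lie in any small ball. To shrink the images one would have to refine on the target side, i.e.\ refine the stratification of $F$, but then the triangulation and $\phi$ change with it, which is circular. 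Second, post-composing $\phi$ with a PL homeomorphism $\psi$ sending each $\phi(v)$ back to $v$ destroys the property you need: $\psi\circ\phi$ maps the open simplices of $K$ into the sets $\psi(S_\alpha)$, not into the strata of $F$, so $\opb{(\psi\circ\phi)}F$ is no longer constant on the simplices. (A lesser point: the subanalytic triangulation theorem produces an abstract complex $|\bS|\subset\R_{\geq0}^S$ and a homeomorphism $|\bS|\isoto\BBV$, not a rectilinear complex $K$ with $|K|=\BBV$.)

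The missing idea is that no norm-controlled homeomorphism is needed, because Theorem~\ref{th:stab} applies to two arbitrary continuous maps. The paper keeps the triangulating homeomorphism $f\cl|\bS|\isoto\BBV$ as it is, subdivides until the \emph{$f$-images} $f(|\sigma|)$ (not the simplices themselves) have diameter $\leq\epsilon$ --- possible since $f$ is a fixed map --- and then defines the piecewise affine map $g(x)=\sum_{p\in S}x(p)f(p)$, which agrees with $f$ at the vertices and hence satisfies $\vvert f-g\vvert\leq\epsilon$ automatically; $g$ is proper and piecewise affine but need not be injective. Setting $G=\roim{g}\opb{f}F$, which is \PL, and applying Theorem~\ref{th:stab} to the pair $f,g\cl|\bS|\to\BBV$ with the sheaf $\opb{f}F$, together with $F\simeq\roim{f}\opb{f}F$, gives $\dist(F,G)\leq\epsilon$ and the support estimate. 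To salvage your formulation you would have to prove the nontrivial statement that a subanalytic stratification can be carried to a PL one by a homeomorphism $\epsilon$-close to the identity; the paper's argument shows this is unnecessary.
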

\begin{proof}
We shall follow the notations of~\cite{KS90}*{\S\,8.1}. Recall that a simplicial complex $\bS=(S,\Delta)$ is the data consisting of a set $S$ and a set $\Delta$ of subsets of $S$, satisfying  certain  conditions (see loc.\ cit.\ Def.~8.1.1). For $\sigma\in\Delta$, one sets
\eqn
&&\vert\sigma\vert=\Bigl\{(x(p))_{p\in S}\in\R_{\geq0}^S\;;\;
\sum_{p\in S}x(p)=1,\;
\text{$x(p)=0$ for $p\notin \sigma$ and
$x(p)>0$ for $p\in\sigma$}
\Bigr\}\,.
\eneqn
Note that the $\vert\sigma\vert$'s are disjoint to each other. One also sets
\eqn
&&\vert\bS\vert=\bigcup_{\sigma\in\Delta}\vert\sigma\vert.
\eneqn
We endow $\vert\bS\vert$ with the induced topology from $\R_{\ge0}^S$.

There exist a simplicial complex $\bS=(S,\Delta)$ and a homeomorphism
$f\cl\vert\bS\vert\isoto\BBV$ such that 
\eqn
&&\text{$(\opb{f}F)\vert_{\vert\sigma\vert}$ is constant for any  $\sigma\in\Delta$}. 
\eneqn
Replacing $\bS$ with its successive barycentric 
subdivisions, we may assume
 further  that $\vvert f(x)-f(y)\vvert\leq\epsilon$ for any $\sigma\in\Delta$ and
$x,y\in\vert\sigma\vert$. Then we set
\eqn
&&g(x)=\sum_{p\in S}x(p)f(p).
\eneqn
Here $S$ is identified with a subset of $\vert\bS\vert$
by  $S\ni q\mapsto x(p)=\delta_{p,q}\in\R^S$.

The map $g\cl \vert\bS\vert\to\BBV$ is  piecewise  linear and continuous and satisfies:
\eqn
&&\vvert f(x)-g(x)\vvert\leq\epsilon.
\eneqn
Hence $g$ is a proper map.
Now we set $G=\roim{g}\opb{f}F$. Then 
$\supp(G)\subset \supp(F)+B_\eps$  and $G\in \Derb_{\rcl}(\cor_\BBV)$, and since $F\simeq \roim{f}\opb{f}F$, we have  by Theorem~\ref{th:stab}
\eqn
&&\quad  \dist(F,G)\leq\epsilon.
\eneqn
\end{proof}

One can approximate any $\gamma$-sheaf with a $\PL$-$\gamma$-sheaf.  Indeed:

\begin{corollary}\label{cor:approx}
Assume~\eqref{hyp1b}.
Let  $F\in\Derb_{\rcg}(\cor_\BBV)$ such that $\supp(F)$ is $\gamma$-proper. 
For each $\epsilon>0$ there exists 
$G\in \Derb_{\rclg}(\cor_\BBV)$  such that $\dist(F,G)\leq\epsilon$.
\end{corollary}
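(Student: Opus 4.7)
The plan is to combine Theorem~\ref{th:approx} with the stability of the $\PL$-property under the operation $\opb{\phig}\roim{\phig}$ provided by Theorem~\ref{th:PL}~(3).

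First I would apply Theorem~\ref{th:approx} to obtain a $\PL$-sheaf $G_0\in \Derb_{\rcl}(\cor_\BBV)$ with $\dist(F,G_0)\leq\epsilon$ and $\supp(G_0)\subset \supp(F)+B_\epsilon$. In general $G_0$ need not lie in $\Derb_{\gammac}(\cor_\BBV)$, so the idea is to project it back into $\Derb_{\gammac}(\cor_\BBV)$ by setting $G\eqdot \opb{\phig}\roim{\phig}G_0$. By Theorem~\ref{th:eqvderbg}, $G$ is automatically a $\gamma$-sheaf; the content of the argument is thus to verify that $G$ is $\PL$ and that $\dist(F,G)\leq\epsilon$.

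To invoke Theorem~\ref{th:PL}~(3), one needs $\supp(G_0)$ to be $\gamma$-proper. Since $\supp(G_0)$ is closed and contained in the closed set $\supp(F)+B_\epsilon$, it suffices to prove that $\supp(F)+B_\epsilon$ is $\gamma$-proper. Here I would first record the standard upgrade of Definition~\ref{def:loclo}~(d): a closed $\gamma$-proper set $A$ satisfies $A\cap (K+\gamma)$ compact for every compact $K\subset\BBV$, which follows from the properness of $s\cl A\times\gamma^a\to\BBV$ by taking the preimage of $K$ and projecting to $A$. Now for any $x\in\BBV$ and any $y\in(\supp(F)+B_\epsilon)\cap(x+\gamma)$, write $y=a+b$ with $a\in\supp(F)$ and $b\in B_\epsilon$; then $a=(y-x)-b+x\in x-B_\epsilon+\gamma\subset(x+B_\epsilon)+\gamma$, so $a\in\supp(F)\cap((x+B_\epsilon)+\gamma)$, which is compact. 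Hence $y=a+b$ lies in a compact set, and being closed, $(\supp(F)+B_\epsilon)\cap(x+\gamma)$ is compact. Theorem~\ref{th:PL}~(3) then gives $G\in\Derb_{\rcl}(\cor_\BBV)$, and therefore $G\in\Derb_{\rclg}(\cor_\BBV)$.

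For the distance estimate, since $F\in\Derb_{\gammac}(\cor_\BBV)$, Theorem~\ref{th:eqvderbg} gives $F\simeq\opb{\phig}\roim{\phig}F$, and Proposition~\ref{prop:distance}~\eqref{item:3} yields
\[
\dist(F,G)=\dist(\opb{\phig}\roim{\phig}F,\opb{\phig}\roim{\phig}G_0)\leq\dist(F,G_0)\leq\epsilon.
\]
The main obstacle is the verification that the $\epsilon$-thickening $\supp(F)+B_\epsilon$ remains $\gamma$-proper; once this topological lemma is in place, the rest is a formal consequence of the equivalence of categories, the approximation theorem, and the stability of $\dist$ under $\opb{\phig}\roim{\phig}$.
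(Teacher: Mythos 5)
Your proof is correct and follows essentially the same route as the paper: apply Theorem~\ref{th:approx}, observe that the support of the approximant is $\gamma$-proper, push it back into $\Derb_{\rclg}(\cor_\BBV)$ via Theorem~\ref{th:PL}~(3), and conclude with Proposition~\ref{prop:distance}~\eqref{item:3} together with $F\simeq\opb{\phig}\roim{\phig}F$. The only difference is that you spell out the $\gamma$-properness of $\supp(F)+B_\epsilon$ (correctly), a point the paper leaves implicit.
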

\begin{proof}
We shall apply Theorem~\ref{th:approx}.
There exists $G\in \Derb_{\rcl}(\cor_\BBV)$ such that
$\dist(F,G)\leq\epsilon$ and $\supp(G)\subset\Supp(F)+B_\eps$.
Hence $\supp(G)$ is $\gamma$-proper.
Then, $\opb{\phig}\roim{\phig}G\in \Derb_{\rclg}(\cor_\BBV)$ 
by Theorem~\ref{th:PL}. On the other hand,
Proposition~\ref{prop:distance}~\eqref{item:3} implies that
$$\dist\bl\opb{\phig}\roim{\phig}F,\opb{\phig}\roim{\phig}G\br
\le\dist(F,G)\le\epsilon.$$
Since $F$ is a $\gamma$-sheaf, one has
$\opb{\phig}\roim{\phig}F\simeq F$.
\end{proof}

\subsection{Barcodes (multi-dimensional case)}

Recall that 
a family of subsets  $Z=\{Z_\alpha\}_{\alpha\in A}$ is locally finite if for any compact subset  $K$ of $\BBV$, 
the set $\set{\alpha\in A}{Z_\alpha\cap K\neq\varnothing}$ is finite. 

\begin{definition}\label{def:bar}
Assume~\eqref{hyp1b}.
A {\em $\gamma$-barcode} $(A, Z)$ in $\BBV$, or simply, a barcode, 
is the data of  a set of indices  $A$ and a  family 
$Z=\{Z_\alpha\}_{\alpha\in A}$ of  subsets of $\BBV$, these data satisfying
\eq\label{eq:bar2}
&&\left\{\parbox{70ex}{
(i) the family $Z=\{Z_\alpha\}_{\alpha\in A}$ is locally finite in $\BBV$,\\[.8ex]
(ii)  the  $Z_\alpha$'s are  non-empty,  $\gamma$-locally closed, 
convex polytopes. 
}\right.
\eneq
The support of the  $\gamma$-barcode  $(A, Z)$, denoted  by  $\supp(A,Z)$, is the set $\bigcup_{\alpha\in A}\ol{Z_\alpha}$. 
\end{definition}
 Let us say that a barcode $(A,Z)$ is \define{elementary} if $A\simeq\rmpt$.  We shall identify the elementary barcode $(\rmpt,Z)$ with the  $\gamma$-locally closed  convex polytope  $Z$.

The barcodes are the objects of the additive category $\Barc$ that we shall describe now. 
\begin{itemize}
\item
the zero-barcode is $0=(A,Z)$ with $A=\emptyset$,
\item 
for two barcodes $(A,S)$  and $(B,Z)$, we set $(A,S)\oplus(B,Z)=(A\sqcup B, S\sqcup Z)$. In other words, 
the sum of $\{S_\alpha\}_{\alpha\in A}$ and $\{Z_\beta\}_{\beta\in B}$ is the barcode
$\{W_\gamma\}_{\gamma\in C}$ with $C=A\sqcup B$ and $W_\gamma=S_\alpha$ or $W_\gamma=Z_\beta$ according  as
 $\gamma=\alpha\in A$ or $\gamma=\beta\in B$. 
\item
For two elementary barcodes $S$ and $T$ one sets
\eq\label{eq:homST1}
&&\hs{3.5ex}\Hom[\Barc](S,T)=\begin{cases}
\cor&\mbox{if $S\cap T$ is non-empty, closed in $S$ and open in $T$},\\
0&\mbox{otherwise.}\end{cases}
\eneq
Note that
\eq\label{eq:homST2}
\Hom[\Barc](S,T)\simeq\Hom(\cor_S,\cor_T).
\eneq
\item
One extends this construction to barcodes by linearity. 
For two barcodes $(A,S)=\{S_\alpha\}_{\alpha\in A}$ and $(B,T)=\{T_\beta\}_{\beta\in B}$,
one sets
\eq\label{eq:hombarc1}
\Hom[\Barc]((A,S),(B,T))&=&
\prod_{\alpha\in A,\beta\in B}\Hom[\Barc](S_\alpha,T_\beta)\nonumber\\
&\simeq&\Hom(\soplus_{\alpha\in A}\cor_{S_\alpha},\prod_{\beta\in B}\cor_{T_\beta})\nonumber\\
&\simeq&\Hom(\soplus_{\alpha\in A}\cor_{S_\alpha},\soplus_{\beta\in B}\cor_{T_\beta}).
\eneq
(The last isomorphism  follows from the fact that the sum is locally finite, similarly  to the one-dimension case.)
The composition of $u\in \Hom[\Barc]((A,S),(B,T))$ and $v\in \Hom[\Barc](( B,T),(C,V))$ 
is defined similarly  to the one-dimension case (see~\eqref{eq:hombarcomp}), or by using~\eqref{eq:hombarc1}.
\end{itemize}
There is a natural functor 
\eq\label{eq:fctpsi}
&&\Psi\cl \Barc\to\mdrclg[\cor_\BBV],\quad
Z=\{Z_\alpha\}_{\alpha\in A}\mapsto \soplus_{\alpha\in A}\cor_{Z_\alpha}
\eneq
and the  functor $\Psi$ is fully faithful in view of~\eqref{eq:hombarc1}.
We have seen that when $\dim\BBV=1$ the functor $\Psi$ is an equivalence of categories. However, 
the functor  $\Psi$ is no more essentially surjective when  $\dim\BBV>1$
as seen in the following examples. 

\begin{example}
Denote by $(x,y)$ the coordinates on $\R^2$ and consider the sets 
\eqn
&&\gamma=\set{(x,y)}{x\leq0,y\leq0}, \\
&&A=(1,0)+\gamma^a, \quad B=(0,1)+\gamma^a, \quad C=(2,2)+\gamma^a.
\eneqn
Define the $\gamma$-sheaf $F$ by the exact sequence $0\to F\to\cor_A\oplus\cor_B\to\cor_C\to0$. Then 
$F\simeq \cor_A\oplus\cor_B$ on $\R^2\setminus C$ and $F$ has rank one on $\Int(C)$.

Let us show that $\End_{\mdrc[\cor_{\BBV}]}(F)\simeq\cor$.

\spa
Let $u$ and $v$ be the generators of $\sect(\BBV;\cor_A)$
and $\sect(\BBV;\cor_B)$, respectively.
Then one has $F(\BBV)=\cor(u-v)\subset\sect(\BBV;\cor_A\oplus\cor_B)$.
We have an exact sequence
$$0\To \cor_{A\setminus B}\to \cor _{A\cup B}\oplus \cor_{A\setminus C}
\To F\to 0.$$
Here, the composition
$\cor_{A\setminus C}\to F\to\cor_B$ vanishes
and the morphism
$ \cor _{A\cup B}\to F$ is given by $u-v$. 
Let $f\in\End(F)$.
We shall show $f\in\cor\id_F$.
We may assume that $f(u-v)=0$ from the beginning, i.e., the composition
$g\cl\cor _{A\cup B}\to F\To[f]F$ vanishes.
Then the composition $\cor_{A\setminus C}\to F\To[f]F\to \cor_A$ vanishes,
since it coincides with $g$ on $A\setminus B$. 
The composition  $\cor_{A\setminus C}\to F\To[f]F\to \cor_B$ vanishes
since $\Hom(\cor_{A\setminus C},\cor_B)\simeq0$. 
Hence $f=0$.

\spa

Therefore $F$ is indecomposable. 
Hence,  $F$ is not in the essential image of $\Psi$.
\end{example}
\begin{example}
Denote by $(x,y,z)$ the coordinates on $\R^3$ and consider the sets 
\eqn
&&\gamma=\set{(x,y,z)}{x\leq-(\vert y\vert+\vert z\vert)}, 
\quad S=\{(x,y,z);x=0, |y|+|z|=1\},\\
&&Z =(S+\gamma^a)\cap\{x<1\}.
\eneqn
Then $Z$ is  a $\gamma$-barcode. 
Since $Z\cap\set{(x,y,z)}{y=z=0}=\emptyset$,
 we consider the map
$r\cl Z\to \R^3\setminus\R\times\{(0,0)\}\to \R^2\setminus\{(0,0)\}\to\BBS^1$.
Then the composition $S\to Z\to[r]\BBS^1$ is an isomorphism.
 Let $L$ be a locally constant but non constant sheaf of rank $1$ on $\BBS^1$. 
Then, the sheaf $F=\opb{r}L$ is locally isomorphic to 
$\cor_Z$ but $F$ is not  isomorphic to $\cor_Z$ since  
$\oim{r}(F\tens\cor_S)\simeq L\not\simeq\cor_{\BBS^1}$. 
Hence $F$ is not in the essential image of $\Psi$.
\end{example}

Note that a different definition of higher dimensional barcodes is proposed in~\cite{Cu13}*{Def.~8.3.4}.

\begin{definition}\label{def:barcodesheaf}
An object of $\mdrclg[\cor_\BBV]$ is a barcode $\gamma$-sheaf if it is in the essential image of $\Psi$.
\end{definition}

\section{Piecewise linear $\gamma$-sheaves}\label{section:PL}

The aim of this section is to prove Theorem~\ref{th:linearcase}.

\subsection{Complements on the $\gamma$-topology}\label{subsection:gtop2}
We denote by $\gamma$ a  closed proper convex cone with non-empty interior, as in~\eqref{hyp1},
and recall  Definition~\ref{def:loclo}.

\Lemma\label{lem:Ugamma}\hfill
\bnum
\item
for any subset $A$ of $\BBV$, one has
$A+\Int(\gamma)=\ol A+\Int(\gamma)$.\label{item:1}

\item If $U$ is an open subset of $\BBV$, then we have
$U+\Int(\gamma)=U+\gamma$. In particular $U\subset U+\Int(\gamma)$.
\label{item:2}
\ee
\enlemma
\Proof
(i)\ For $x\in \ol A$ and $v\in \Int(\gamma)$, let us show
$x+v\in A+\Int(\gamma)$.
Since $x+v-\Int(\gamma)$ is a neighborhood of $x$,
there exists $y\in A\cap\bl x+v-\Int(\gamma)\br$.
Then, one has $x+v\in y+\Int(\gamma)\subset A+\Int(\gamma)$.

\spa
(ii)\ Let us show $U\subset U+\Int(\gamma)$.
For $x\in U$, there exists $y\in \Int(\gamma)\cap (x-U)$.
Then $x\in y+U\subset U+\Int(\gamma)$.

To complete it is enough to remark that $U+\gamma
\subset \bl U+\Int(\gamma)\br+\gamma=U+\Int(\gamma)$,
\QED
\begin{lemma}\label{le:gammaflat1}\hfill
\banum
\item The intersection of a $\gamma$-invariant set 
and a $\gamma^a$-invariant set is $\gamma$-flat,
i.e.,  $(B+\gamma)\cap(C+\gamma^a)$ is $\gamma$-flat
for any $B,C\subset\BBV$. 
\label{item:a}
\item
 If $A$ is $\gamma$-flat, then 
$\Int(A)=\bl A+\Int(\gamma)\br\cap\bl A+\Int(\gamma^a)\br$
and $\Int(A)$ is $\gamma$-flat.
\item  If $U$ is $\gamma$-open, then one has
$\Int(\ol U)=U$. 
\item if $U$ is a $\gamma$-flat open subset of $\BBV$, then
$Z\seteq (U+\gamma)\cap \ol{U+\gamma^a}$ is $\gamma$-locally closed and
$\Int(Z)=U$.
\eanum
\end{lemma}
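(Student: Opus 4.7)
Part (a) is just set-theoretic bookkeeping. The plan is: set $A=(B+\gamma)\cap(C+\gamma^a)$; the inclusion $A\subset(A+\gamma)\cap(A+\gamma^a)$ is automatic from $0\in\gamma\cap\gamma^a$, while for the reverse I observe that $A\subset B+\gamma$ forces $A+\gamma\subset(B+\gamma)+\gamma=B+\gamma$, and symmetrically $A+\gamma^a\subset C+\gamma^a$, so the intersection recovers $A$.

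For part (b), my plan is to exploit an openness trick. Both $(A+\Int(\gamma))\cap(A+\Int(\gamma^a))$ and $(\Int(A)+\gamma)\cap(\Int(A)+\gamma^a)$ are \emph{open}: the former as an intersection of open sets, the latter as an intersection of unions of open translates of $\Int(A)$. By the $\gamma$-flatness of $A$ each is contained in $(A+\gamma)\cap(A+\gamma^a)=A$, hence in $\Int(A)$. For the remaining inclusion $\Int(A)\subset(A+\Int(\gamma))\cap(A+\Int(\gamma^a))$, a simple perturbation suffices: given $x\in\Int(A)$ and $v\in\Int(\gamma)$, the point $x-\epsilon v$ lies in $A$ for $\epsilon>0$ small, so $x\in A+\Int(\gamma)$, and similarly on the $\gamma^a$ side. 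Both equalities of (b) then drop out at once.

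For part (c), the plan is to combine the two halves of Lemma~\ref{lem:Ugamma}. The easy inclusion $U\subset\Int(\ol U)$ holds since $U$ is open. For the converse, given $x\in\Int(\ol U)$ and $v\in\Int(\gamma)$, I perturb backwards: $x-\epsilon v\in\ol U$ for small $\epsilon>0$, so $x\in\ol U+\Int(\gamma)$. By Lemma~\ref{lem:Ugamma}(i) this equals $U+\Int(\gamma)$, and by (ii) together with $\gamma$-invariance of $U$ it collapses to $U$.

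For part (d), I assemble the previous pieces. The $\gamma$-local-closedness of $Z$ rests on two observations: $U+\gamma$ is open and $\gamma$-invariant (hence $\gamma$-open), while $\ol{U+\gamma^a}$ is closed, and its $\gamma^a$-invariance follows by taking limits of $\gamma^a$-translates from $U+\gamma^a$. The inclusion $U\subset\Int(Z)$ is immediate since $U$ is open and visibly contained in $Z$. For $\Int(Z)\subset U$, I combine $\Int(Z)\subset U+\gamma$ (from $Z\subset U+\gamma$) with $\Int(Z)\subset\Int(\ol{U+\gamma^a})=U+\gamma^a$, where the last equality is part (c) applied to the $\gamma^a$-open set $U+\gamma^a$; then the $\gamma$-flatness of $U$ closes the argument via $(U+\gamma)\cap(U+\gamma^a)=U$. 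The only point demanding care is orchestrating the openness arguments, the flatness hypothesis, and the two applications of Lemma~\ref{lem:Ugamma} in the right order --- no individual step is a real obstacle.
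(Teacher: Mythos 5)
Your proposal is correct and follows essentially the same route as the paper's proof: (a) is the identical absorption argument, (b) and (c) rest on the same openness/perturbation ideas (where the paper cites Lemma~\ref{lem:Ugamma}~(ii) and deduces the flatness of $\Int(A)$ from (a), you reprove the needed inclusions directly, which amounts to the same thing), and (d) is argued exactly as in the paper, including applying (c) to the $\gamma^a$-open set $U+\gamma^a$. No gaps.
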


\begin{proof}
(a) Set $D=(B+\gamma)\cap(C+\gamma^a)$. Then we have
$D+\gamma\subset (B+\gamma)+\gamma=B+\gamma$.
Similarly we have
$D+\gamma^a\subset (C+\gamma^a)+\gamma^a=C+\gamma^a$.
Hence, we obtain
$(D+\gamma)\cap (D+\gamma^a)\subset
(B+\gamma)\cap (C+\gamma^a)=D$.

\spa
(b)  Set $U=\bl A+\Int(\gamma)\br\cap\bl A+\Int(\gamma^a)\br$. Then $U$ is open and contained in $A$, hence, in $\Int(A)$. 
The other inclusion follows from Lemma~\ref{lem:Ugamma}\ \eqref{item:2}.

Note that $\Int(A)$ is $\gamma$-flat by (a).

\spa
(c) Let $x\in \Int(\ol U)$.
Then there exists $v\in\Int(\gamma^a)$ such that
$x+v\in \ol U$.
Since $x+\Int(\gamma^a)$ is a neighborhood of $x+v$, there exists
$y\in U\cap (x+\Int(\gamma^a))$.
Hence we have
$x\in y+\Int(\gamma)\subset U$. 

\spa
(d) It is obvious that $Z$ is $\gamma$-locally closed and $U\subset \Int(Z)$.
Conversely, (c) implies that
$\Int(Z)\subset (U+\gamma)\cap\Int(\ol{U+\gamma^a})
=(U+\gamma)\cap(U+\gamma^a)=U$.
\end{proof}

\begin{lemma}\label{le:gammaflat2}
Let  $Z$ be a  $\gamma$-locally closed subset of $\BBV$ and $\Omega=\Int(Z)$. 
Then 
\banum
\item for any $x\in Z$, there exists a neighborhood $W$ of $x$
such that $(x+\gamma^a)\cap W\subset Z$,
\item
if $x\in\BBV$ satisfies
$x\in\ol{\;(x+\gamma^a)\cap Z\;}$,
then one has $x\in Z$, \label{item:f}
\item
$Z$ is $\gamma$-flat,
\item $\Omega+\Int(\gamma)=Z+\Int(\gamma)=Z+\gamma$,\label{item:c}
\item
$Z=(\Omega+\gamma)\cap\ol{\;\Omega+\gamma^a\;}
=(\Omega+\gamma)\cap\ol{\Omega}$,
\item
$\Omega=(Z+\gamma)\cap\bl Z+\Int(\gamma^a)\br=Z\cap \bl Z+\Int(\gamma^a)\br$\\
$=\set{x\in \BBV}
{\hs{1ex}\parbox{42ex}{there exists an open neighborhood $W$ of $x$\\
such that $\bl x+\gamma\br\cap W\subset Z$}},$\label{item:omega}
\item
$\Omega$ is $\gamma$-flat.
\eanum
\end{lemma}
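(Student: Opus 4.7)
The plan is to fix a decomposition $Z = U \cap F$ with $U$ open satisfying $U + \gamma = U$ and $F$ closed satisfying $F + \gamma^a = F$, which exists by the definition of $\gamma$-locally closed. Two observations underpin the whole argument: first, $\Int(F)$ is still $\gamma^a$-invariant (translates of open subsets of $F$ by $\gamma^a$ remain open subsets of $F$), so $\Omega = \Int(Z) = U \cap \Int(F)$; second, $F + \Int(\gamma^a)$ is open (being a union of open translates of $\Int(\gamma^a)$) and contained in $F$, hence contained in $\Int(F)$.

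Parts (a), (b), (c), (g) are quick. For (a), take $W = U$: then $(x+\gamma^a) \cap U \subset F \cap U = Z$. For (b), extract a sequence $x_n = x + v_n \in (x+\gamma^a) \cap Z$ with $x_n \to x$; then $x = x_n + (-v_n) \in U + \gamma = U$ and $x = \lim x_n \in \ol F = F$, so $x \in Z$. Part (c) is Lemma~\ref{le:gammaflat1}\,(a) applied to $U \cap F$, and (g) follows from (c) via Lemma~\ref{le:gammaflat1}\,(b), or directly from $\Omega = U \cap \Int(F)$ and Lemma~\ref{le:gammaflat1}\,(a).

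The main technical ingredient for (d)--(f) is the following: for any $x \in Z$ and any sufficiently small $v'' \in \Int(\gamma^a)$, we have $x + v'' \in \Omega$. Indeed, $x + v'' \in F + \Int(\gamma^a) \subset \Int(F)$, and $x + v'' \in U$ for $v''$ small since $U$ is open and contains $x$. Using this, (d) is proved in two steps. First, $Z + \gamma \subset Z + \Int(\gamma)$ by writing $x + g = (x-\epsilon w) + (g + \epsilon w)$ with $w \in \Int(\gamma)$ and $\epsilon > 0$ small, so that $x - \epsilon w \in Z$ (using $x \in U$ open and $-\epsilon w \in \gamma^a$) and $g + \epsilon w \in \gamma + \Int(\gamma) = \Int(\gamma)$; second, $Z + \Int(\gamma) \subset \Omega + \Int(\gamma)$ by the same substitution, now yielding $x - \epsilon w \in \Omega$ thanks to the key step. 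For (e), the inclusion $(\Omega+\gamma) \cap \ol{\Omega + \gamma^a} \subset Z$ follows from $\Omega + \gamma = Z + \gamma \subset U$ (by (d)) together with $\ol{\Omega + \gamma^a} \subset \ol F = F$; the reverse $Z \subset (\Omega + \gamma) \cap \ol\Omega$ uses the key step to produce, for each $x \in Z$, a sequence $\omega_n = x + v_n'' \in \Omega$ with $v_n'' \to 0$, so that $x \in \ol\Omega$ and $x = \omega_n + (-v_n'') \in \Omega + \Int(\gamma)$.

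For (f), the first two equalities follow along the same lines: $\Omega \subset Z \cap (Z+\Int(\gamma^a))$ uses the decomposition $\omega = (\omega + \epsilon v) + (-\epsilon v)$ (exploiting $\omega \in \Int(F)$ to ensure $\omega + \epsilon v \in Z$), while $(Z+\gamma) \cap (Z+\Int(\gamma^a)) \subset \Omega$ follows because $Z + \gamma \subset U$ and $Z + \Int(\gamma^a) \subset F + \Int(\gamma^a) \subset \Int(F)$. The delicate point is the third characterization. Given $x$ with an open neighborhood $W$ such that $(x+\gamma) \cap W \subset Z$, one has $x \in Z$ and hence $x \in U$; one picks $v \in \Int(\gamma)$ small enough that $x + v \in W$, whence $x + v \in Z \subset F$ and $(x+v) + \gamma^a \subset F$. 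The crux is the observation that $v + \gamma^a$ contains a neighborhood of $0$: since $-v \in \Int(\gamma^a)$, there is an open nbhd $N \subset \gamma^a$ of $-v$, and $v + N$ is an open nbhd of $0$ inside $v + \gamma^a$. Therefore $x + v + \gamma^a$ contains a nbhd of $x$ inside $F$, forcing $x \in \Int(F) \cap U = \Omega$. I expect this last step, converting one-sided $\gamma$-control at $x$ into a full neighborhood of $x$ inside $F$, to be the subtlest and the main obstacle.
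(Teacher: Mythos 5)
Your proof is correct and takes essentially the same route as the paper: both fix a decomposition $Z=U\cap F$ with $U$ $\gamma$-open and $F$ $\gamma$-closed and chase elementary inclusions, and your key step (for $x\in Z$ and small $v''\in\Int(\gamma^a)$ one has $x+v''\in\Omega$) is exactly the paper's observation that $U\cap\bigl(x+\Int(\gamma^a)\bigr)\subset\Omega$ for $x\in Z$. The minor variations—exploiting $\Omega=U\cap\Int(F)$ and $F+\Int(\gamma^a)\subset\Int(F)$ for (f) and (g), and the neighborhood-of-the-origin trick for the last characterization in (f)—are sound and equivalent to the paper's arguments.
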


\begin{proof}
Write $Z=A\cap B$ with a $\gamma$-open $A$ and $\gamma$-closed $B$.
Hence $A+\gamma=A$ and $B+\gamma^a=B$.

\spa
(a)\ 
Setting $W=A$, one has $(x+\gamma^a)\cap W\subset B\cap A=Z$.

\spa 
(b) Assume that $x\in\ol{\;(x+\gamma^a)\cap Z\;}$.
Then one has $x\in\ol Z\subset B$.
Hence it remains to show $x\in A$.
Since $(x+\gamma^a)\cap Z\not=\emptyset$, there exists 
$y\in (x+\gamma^a)\cap Z$. Then one has
$x\in y+\gamma\subset A+\gamma=A$.

\spa
(c) follows from Lemma~\ref{le:gammaflat1}~\eqref{item:a}.

\spa
(d) It is enough to show that $Z\subset \Omega+\Int(\gamma)$.
Let $x\in Z$. Then $A\cap \bl x+\Int(\gamma^a)\br \subset A\cap B=Z$,
which implies that 
\eq
&&\text{$A\cap \bl x+\Int(\gamma^a)\br \subset \Omega$ for any $x\in Z$.}
\label{eq:Aomega}
\eneq
Since $A\cap \bl x+\Int(\gamma)^a\br\not=\emptyset$, there exists
$y\in A\cap \bl x+\Int(\gamma)^a\br$. Then one has
$x\in y+\Int(\gamma)\subset \Omega+\Int(\gamma)$.

\spa
(e) One has
$(\Omega+\gamma)\cap \ol {\,\Omega+\gamma^a\,} \subset A\cap B=Z$.
Let us show $Z\subset (\Omega+\gamma)\cap \ol\Omega$.
The inclusion $Z\subset \Omega+\gamma$ follows from \eqref{item:c}.
Finally, let us show that 
$x\in\ol \Omega$ for any $x\in Z$.
By  \eqref{eq:Aomega}, one has
$x\in\ol{\;A\cap\bl x+\Int(\gamma^a)\br\;}\subset\ol{\Omega}$.

\spa
(f) We shall show $(Z+\gamma)\cap\bl Z+\Int(\gamma^a)\br \subset \Omega\subset 
Z\cap \bl Z+\Int(\gamma^a)\br$.
We have
$$(Z+\gamma)\cap\bl Z+\Int(\gamma^a)\br
\subset(\Omega+\gamma)\cap\bl Z+\Int(\gamma^a)\br=A\cap B=Z.$$
Here the first inclusion follows from \eqref{item:c}. 
Hence we obtain $(Z+\gamma)\cap\bl Z+\Int(\gamma^a)\br \subset \Omega$.

The inclusion $\Omega\subset 
Z\cap \bl Z+\Int(\gamma^a)\br$ 
follows from $\Omega\subset\Omega+\Int(\gamma^a)$,
which is a consequence of Lemma~\ref{lem:Ugamma}~\eqref{item:2}. 

Let us prove the last equality.
Let $x\in\BBV$ such that 
$\bl x+\gamma\br\cap W\subset Z$
for an open neighborhood $W$ of $x$.
Take $y\in
\bl x+\Int(\gamma)\br\cap W\subset Z$.
Then we have
$x\in y+\Int(\gamma^a)\subset Z+\Int(\gamma^a)$. 

\spa
(g)\ 
One has
$$(\Omega+\gamma)\cap(\Omega+\gamma^a)
=(\Omega+\gamma)\cap\bl\Omega+\Int(\gamma^a)\br
\subset(\Omega+\gamma)\cap\bl Z+\Int(\gamma^a)\br
\subset\Omega.$$
Here the first equality is by Lemma~\ref{lem:Ugamma}~\eqref{item:2}
and the last inclusion follows from \eqref{item:omega}.
\end{proof}

The following proposition now follows from Lemma~\ref{le:gammaflat1}
and Lemma~\ref{le:gammaflat2}. 
\begin{proposition}
The set of $\gamma$-flat open subsets $\Omega$ of $\BBV$ and
the set of $\gamma$-locally closed subsets $Z$ of $\BBV$ are isomorphic 
by the correspondence 
$$\xymatrix@R=0ex@C=5ex{
\mbox{$\rule{4ex}{0ex}\Omega$}\ar@{|->}[r]&(\Omega+\gamma)\cap 
\ol{\;\Omega+\gamma^a\;}\\
\Int(Z)&\mbox{$Z.\rule{14ex}{0ex}$}\ar@{|->}[l]}
$$ 
\end{proposition}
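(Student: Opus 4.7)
The proof is essentially a bookkeeping exercise assembling the pieces already established in Lemmas~\ref{le:gammaflat1} and~\ref{le:gammaflat2}. The plan is to verify that the two maps are well-defined between the claimed sets, and then to check that they are mutually inverse; both verifications reduce to quoting the appropriate parts of those lemmas.

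First I would check that the map $\Omega \mapsto Z_\Omega := (\Omega+\gamma)\cap\overline{\Omega+\gamma^a}$ sends $\gamma$-flat open subsets to $\gamma$-locally closed subsets. This is exactly the content of Lemma~\ref{le:gammaflat1}~(d), which moreover asserts that $\Int(Z_\Omega) = \Omega$. In the reverse direction, I would check that $Z \mapsto \Int(Z)$ sends $\gamma$-locally closed subsets to $\gamma$-flat open subsets, which is Lemma~\ref{le:gammaflat2}~(g).

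It then remains to establish the two inverse relations. The identity $\Int(Z_\Omega) = \Omega$ for $\Omega$ $\gamma$-flat open has already been recorded in Lemma~\ref{le:gammaflat1}~(d). Conversely, for $Z$ a $\gamma$-locally closed subset with $\Omega := \Int(Z)$, the identity
\[
Z = (\Omega+\gamma)\cap\overline{\Omega+\gamma^a}
\]
is precisely the first equality in Lemma~\ref{le:gammaflat2}~(e).

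Since the whole argument is a direct assembly of the preceding lemmas, there is no real obstacle here; the substantive work was performed in establishing those lemmas (in particular Lemma~\ref{le:gammaflat2}~(e), where one needs the interplay between $\overline{\Omega}$, $\Omega+\gamma$, and the $\gamma$-local closedness of $Z$). The proposition itself is simply the statement that the bijection one has just set up is well-defined on both sides and mutually inverse.
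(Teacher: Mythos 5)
Your proof is correct and follows exactly the route the paper takes: the paper simply remarks that the proposition "now follows from Lemma~\ref{le:gammaflat1} and Lemma~\ref{le:gammaflat2}," and you have identified precisely the relevant parts (Lemma~\ref{le:gammaflat1}~(d) for well-definedness and one inverse identity, Lemma~\ref{le:gammaflat2}~(g) for the other well-definedness, and Lemma~\ref{le:gammaflat2}~(e) for the remaining inverse identity).
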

\Lemma\label{lem:disjoint}
Let $U_i$ $(i=1,2)$ be $\gamma$-flat open subsets such that
$U_1\cap U_2=\emptyset$.
Set $Z_i=(U_i+\gamma)\cap\ol{\;U_i+\gamma^a\;}$.
Then one has $Z_1\cap Z_2=\emptyset$.
\enlemma
\Proof
Since $Z_i$ is $\gamma$-locally closed,
$Z_1\cap Z_2$ is also $\gamma$-locally closed. Then, by Lemma~\ref{le:gammaflat2}, 
$Z_1\cap Z_2$ is contained in the closure of 
$\Int(Z_1\cap Z_2)\subset\Int(Z_1)\cap \Int(Z_2)=U_1\cap U_2=\varnothing$. 
\QED

\subsection{Study of $\gamma$-sheaves}
In this subsection, we shall obtain some results on the behaviour of constructible $\gamma$-sheaves, preliminary to the study of piecewise linear  $\gamma$-sheaves and Theorem~\ref{th:linearcase}.

\Lemma\label{lem:inv}
Let $M$ be a manifold and $N$ a submanifold of $M$ of codimension $\ge2$.
Let $\Lambda$ be a closed conic involutive subset of $T^*M$.
If $\pi(\Lambda\cap\dTM)=N$, 
then $T^*_NM\subset\Lambda$.
\enlemma
\Proof
We choose a local coordinate system $(x)=(t,y)$ such that $N=\{t=0\}$, $(t)=(t_1,\ldots,t_m)$. Let $(x;\xi)=(t,y;\tau,\eta)$ denote the associated coordinates on $T^*M$. Let $(0,y_0)\in N$. There exists $(\tau_0,\eta_0)\neq0$ such that $(0,y_0;\tau_0,\eta_0)\in\Lambda$. Since $\Lambda\cap\dTM$ is contained in $t=0$, $\Lambda\cap\dTM$ is invariant by $\frac{\partial}{\partial \tau_k}$ for $1\le k\le m$  by \cite{KS90}*{Proposition~6.5.2}.  
Since $m>1$, this implies that $\Lambda\cap\dTM$ contains $(0,y_0;\tau,\eta_0)$ for any $\tau$. Moreover, since $\Lambda$ is conic, this implies $(0,y_0;\tau,0)\in \Lambda\cap\dTM$ for any $\tau$. 
\QED

\Prop\label{prop:germofF1b}
Let $F\in \mdrcg[\cor_{\BBV}]$. Then $S\seteq\Sing(F)$ has pure codimension $1$. 
Moreover, for any $x\in S_\reg$, one has $T_xS\cap \Int(\gamma)=\varnothing$,
or equivalently $(T_S^*\BBV)_x\subset\gamma^\circ\cup\gamma^{\circ a}$. 
\enprop
\begin{proof} 
Assume that there is a point where $S$ has codimension $\ge2$.
Take an open subset $U$ such that
$S\cap U$ is a non-empty submanifold of codimension $\ge 2$.
Note that $\Lambda\seteq\musupp(F)$ is involutive 
(\cite{KS90}*{Theorem~.6.5.4}), and
$U\cap\pi(\Lambda\cap\dTM)=S\cap U$. Hence Lemma~\ref{lem:inv} 
implies that
$\pi^{-1}U\cap T^*_SM\subset \Lambda\subset\BBV\times\gamma^{\circ a}$.
It contradicts the fact that $\gamma^{\circ a}$ is a proper closed convex cone.

The last assertion is a consequence of the fact that
$\musupp(F)\cap\pi^{-1}U\subset T^*_{S_\reg}\BBV$ for an open dense subanalytic subset $U$ of $S_\reg$
 and hence, $S$ being an hypersurface,  $(T^*_{S_\reg}\BBV\cap\pi^{-1}U)\subset
\musupp(F)\cup\musupp(F)^a$. 
\QED

From now on, and until the end of this paper, we assume that $\gamma$ satisfies~\eqref{hyp1b}.

\begin{lemma}\label{le:curveslpolyh}
Assume~\eqref{hyp1b}. Let $x\in\BBV$, let $I$ be an open interval of $\R$ with $0\in I$ and 
let $c\cl I\to\BBV$ be a real analytic  map. Assume that $c(t)\in (x+\gamma)\setminus\{x\}$ for $t\in I, t>0$ and $c(0)=x$. Then $c'(t)\in\gamma$ for all $t\geq0$ in a neighborhood of $0$.
\end{lemma}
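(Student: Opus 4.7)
The plan is to exploit polyhedrality to reduce to a finite collection of real-analytic functions of one variable. By hypothesis~\eqref{hyp1b}, $\gamma$ is polyhedral, so there exist finitely many covectors $\xi_1,\ldots,\xi_N \in \BBV^*$ (generators of $\gamma^\circ$) with
\[
\gamma \;=\; \bigcap_{i=1}^N \{\,v \in \BBV : \langle \xi_i, v\rangle \geq 0\,\}.
\]
Setting $\varphi_i(t) \eqdot \langle \xi_i, c(t)-x\rangle$, the hypothesis $c(t)\in x+\gamma$ for $t>0$ small becomes $\varphi_i(t)\geq 0$, while the desired conclusion $c'(t)\in\gamma$ becomes $\varphi_i'(t)\geq 0$ for every $i$. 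So it suffices to prove: if $\varphi\colon I\to\R$ is real analytic with $\varphi(0)=0$ and $\varphi(t)\geq 0$ for all small $t>0$, then $\varphi'(t)\geq 0$ on some interval $[0,\epsilon)$.

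For this one-variable statement, real analyticity is the whole game. Either $\varphi$ vanishes identically in a neighborhood of $0$, in which case $\varphi'\equiv 0$ there, or there is a smallest integer $k\geq 1$ and a nonzero $a$ with $\varphi(t) = a t^k + O(t^{k+1})$. The non-negativity condition for $t>0$ forces $a>0$, and differentiating term by term gives $\varphi'(t) = k a\,t^{k-1} + O(t^k)$, which is $\geq 0$ on a right neighborhood of $0$ (strictly positive for $t>0$).

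To finish, apply this dichotomy to each $\varphi_i$ and obtain an $\epsilon_i>0$ such that $\varphi_i'(t)\geq 0$ on $[0,\epsilon_i)$. Since the index set is finite, $\epsilon \eqdot \min_i \epsilon_i$ is strictly positive, and on $[0,\epsilon)$ we have $\langle\xi_i, c'(t)\rangle\geq 0$ for all $i$, which is the definition of $c'(t)\in\gamma$. The only step where something could go wrong is the finiteness of the index set: an analogous argument for a non-polyhedral $\gamma$ would require uniformity over a continuous family of supporting hyperplanes, and that is exactly why the statement is formulated under~\eqref{hyp1b}. Otherwise, the argument is essentially a Taylor-expansion exercise applied facet by facet.
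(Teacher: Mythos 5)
Your proposal is correct and follows essentially the same route as the paper: reduce via polyhedrality to finitely many linear functionals, then apply the elementary one-variable fact that a real-analytic function vanishing at $0$ and nonnegative for small $t>0$ has nonnegative derivative on a right neighborhood of $0$, using the leading Taylor coefficient. Your version is in fact slightly more carefully written (working with $\langle\xi_i, c(t)-x\rangle$ so the functions vanish at $0$, and noting explicitly that finiteness of the index set yields a uniform $\epsilon$), but the substance is identical.
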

\begin{proof}
Since $\gamma$ is  polyhedral, we may assume that
$\gamma=\set{x\in\BBV}{f(x)\ge0}$
for a linear function $f(x)$  on  $\BBV$.
Set $\phi(t)=f(c(t))$.
it is enough to prove that $\phi'(t)\geq0$
for $t\geq0$ in a neighborhood of $0$. If $\phi=0$, the result is clear. Otherwise, there exists $m\in\N$, $m>0$ such that  $\phi(t)=t^m v+{O}(t^{m+1})$ with $v\neq 0$.  Then $v>0$ and it follows that $\phi'(t)>0$ for $t>0$ in a neighborhood of $t=0$. 
\end{proof}

\begin{remark}
Lemma~\ref{le:curveslpolyh} is no more true without the assumption that the cone is polyhedral. 
Consider the cone 
$\gamma=\{(x,y,z)\in\R^3; x^2+y^2\leq z^2, z\geq0\}$
and the curve 
$c(t)=(t\cos(t),t\sin(t), t)$.  One easily checks that $c'(t)\notin\gamma$ for $t>0$.
\end{remark}

\begin{theorem}\label{th:germofF}
Assume~\eqref{hyp1b}. 
Let $F\in \Derb_{\rcg}(\cor_{\BBV})$. Then for each $x\in\BBV$, there exists an open neighborhood $U$ of $x$ such that $F\vert_{(x+\gamma^a)\cap U}$ is constant. 
\end{theorem}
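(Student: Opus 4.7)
My plan is to reduce the claim to a geometric emptiness statement about the singular locus $S\eqdot\Sing(F)$ inside the open relative cone $x+\Int(\gamma^a)$, upgrade it to constancy on that convex open set, and then extend to the closed cone using the $\gamma$-sheaf structure.

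First, I would exploit Proposition~\ref{prop:germofF1b}: $S$ is pure of codimension one and at each smooth point $z\in S_\reg$ the hyperplane $T_zS$ meets neither $\Int(\gamma)$ nor, by the linearity of $T_zS$, $\Int(\gamma^a)$. Arguing by contradiction, if $x$ lies in the closure of $S\cap(x+\Int(\gamma^a))$ then the subanalytic curve selection lemma produces a real analytic curve $c\cl[0,\epsilon)\to\BBV$ with $c(0)=x$ and $c(t)\in S_\reg\cap(x+\Int(\gamma^a))$ for $t>0$. Lemma~\ref{le:curveslpolyh}, applied with $\gamma^a$ in place of $\gamma$, then yields $c'(t)\in\gamma^a$ for small $t\geq 0$; combined with $c'(t)\in T_{c(t)}S$ and the previous avoidance, this forces $c'(t)\in\partial\gamma^a$. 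Since $\gamma^a$ is polyhedral, $\partial\gamma^a$ is a finite union of proper faces, and the real analyticity of $c'$ pins it into a single face $\Phi$ for all small $t\geq 0$; integrating gives $c(t)-x\in\Phi\subset\partial\gamma^a$, contradicting $c(t)\in x+\Int(\gamma^a)$. Hence for a sufficiently small open ball $U$ around $x$, $S\cap(x+\Int(\gamma^a))\cap U=\varnothing$, so $F\vert_\Omega$ is constant with some value $L\in\Derb(\cor)$, where $\Omega\eqdot(x+\Int(\gamma^a))\cap U$ is convex and hence simply connected.

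Applying the same contradiction argument at every $y\in(x+\gamma^a)\cap\ol U$ and using the subanalyticity of $S$ to obtain a uniform radius on this compact set, I can shrink $U$ once more so that $S\cap(y+\Int(\gamma^a))\cap U=\varnothing$ for every $y\in(x+\gamma^a)\cap U$. Because $y+\Int(\gamma^a)\subset x+\Int(\gamma^a)$ whenever $y\in x+\gamma^a$, this gives $F\vert_{(y+\Int(\gamma^a))\cap U}\simeq L$ for every such $y$.

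The main obstacle---and the place where the $\gamma$-sheaf hypothesis is indispensable---is to show $F_y\simeq L$ also at each boundary point $y\in(x+\partial\gamma^a)\cap U$, since pure constructibility would not control these stalks. Using the equivalence $F\simeq\opb{\phig}\roim{\phig}F$ of Theorem~\ref{th:eqvderbg}, I would rewrite $F_y$ as the $\BBV_\gamma$-stalk $\varinjlim_\epsilon\rsect(y+\gamma+B_\epsilon;F)$ and invoke microlocal propagation from \cite{KS90} along covectors $\xi\in\Int(\gamma^\circ)$, which lie outside $\musupp(F)\subset\gammac$ because $\gamma^\circ\cap\gammac=\{0\}$ for a proper cone with non-empty interior. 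This identifies the colimit with the already-established generic value $L$, and the simple connectedness of the convex set $(x+\gamma^a)\cap U$ then promotes the pointwise identity $F_y\simeq L$ to the asserted isomorphism $F\vert_{(x+\gamma^a)\cap U}\simeq L_{(x+\gamma^a)\cap U}$.
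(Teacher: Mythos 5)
Your opening reduction---showing via Proposition~\ref{prop:germofF1b}, the curve selection lemma and Lemma~\ref{le:curveslpolyh} (applied to $\gamma^a$) that $\Sing(F)$ cannot accumulate at $x$ from inside $x+\Int(\gamma^a)$---is essentially correct (after reducing to degree $0$, as one must in order to apply Proposition~\ref{prop:germofF1b}), and it does give that $F$ is constant on $(x+\Int(\gamma^a))\cap U$ for $U$ small. But this only treats the open part of the cone, and the remaining step, which you rightly single out as the main obstacle, is not proved by what you propose.

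Two things go wrong there. First, the propagation/stalk formula for $\gamma$-sheaves identifies $F_y$ with $\rsect(y+\gamma;F)$ (equivalently with sections over $\gamma$-open neighborhoods of $y$): these are sections over a set extending in the $\gamma$-direction, i.e.\ away from the region $(x+\Int(\gamma^a))\cap U$ where constancy has been established, and on which $F$ is completely unknown. So ``invoking microlocal propagation along covectors in $\Int(\gamma^\circ)$'' merely reproduces this stalk formula; it does not compare $F_y$ with the generic value $L$ on the $\gamma^a$-side. That comparison is essentially the statement being proved (it is also the content of Theorem~\ref{th:locctonflat}, whose proof in the paper uses the present theorem, so it is not available here). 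Second, even granting $F_y\simeq L$ for every $y\in(x+\gamma^a)\cap U$, pointwise isomorphisms of stalks on a convex (hence simply connected) set do not yield constancy of the restriction: on $\R$ the $\gamma$-sheaf $\cor_{[0,1)}\oplus\cor_{[1,2)}$ has all stalks on $[0,2)$ isomorphic to $\cor$, yet its restriction to $[0,2)$ is not constant. Constancy requires an actual morphism of sheaves inducing these isomorphisms, and your argument never constructs one. This is exactly what the paper's proof supplies: after reducing to degree $0$ and to $\supp(F)\subset x+\gamma^a$, it uses the isomorphisms $F(y+\gamma)\isoto F_y$ together with $F(V)\isoto F_x$ to build a monomorphism $E_{(x+\gamma^a)\cap V}\into F$ with $E=F_x$, and then shows that the cokernel $G$ (a constructible $\gamma$-sheaf supported in $x+\gamma^a$ with $G_x\simeq0$) vanishes near $x$, by curve selection, Lemma~\ref{le:curveslpolyh} and the microsupport estimate for $\opb{\phi}G$ along a $C^1$ curve. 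That cokernel-vanishing argument is the correct substitute for both missing pieces of your plan; your Step 1, while true, is not needed once this morphism is available.
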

\begin{proof}
(i) By Corollary~\ref{cor:eqvderbg}, we may assume that $F$ is concentrated in degree $0$.
Moreover, the sheaf $F\tens\cor_{x+\gamma^a}$ belongs to $\mdrcg[\cor_\BBV]$. Hence we may assume from the beginning that $\supp(F)\subset x+\gamma^a$.  

\spa
(ii) By  Theorem~\ref{th:eqvderbg}, for any $y\in\BBV$, $F(y+\gamma)\isoto F_y$. 
 Since $F$ is $\R$-constructible, there exists an open neighborhood $V$ of $x$ such that $\sect(V;F)\isoto F_x$. Let $y\in V\cap(x+\gamma^a)$. Then $x\in y+\gamma$. Consider the diagram
\eqn
&&\xymatrix{
F(V)\,\,\,\ar@{>->}[r]\ar[d]_-\sim&F_y\isofrom F(y+\gamma)\ar[d]\\
F_x&F((y+\gamma)\cap V)\ar@{->>}[l]
}\eneqn
Let $E\eqdot F_x$, a finite-dimensional $\cor$-vector space. We have an injective map 
$E\to F_y$ for all $y\in (x+\gamma^a)\cap V$, hence a monomorphism $
E_{(x+\gamma^a)\cap V}\into F$. Define $G$ as the cokernel of this map. Then $G_x\simeq0$,
 $\supp(G)\subset x+\gamma^a$  and $G\in \mdrcg[\cor_\BBV]$. It remains to show that $G\simeq0$ in a neighborhood of $x$. 

\spa
(iii) Assume that  $G\neq0$ in any neighborhood of $x$. 
Then $\set{y\in \BBV}{G_y\not=0}$ 
is a subanalytic set whose closure contains $x$. 
By the curve selection lemma, 
 we find an analytic curve $c\cl I\to\gamma^a$ such that $c(0)=x$ and 
$G_{c(t)}\not=0$ for any $t\in I$ such that
$t>0$. 
By Lemma~\ref{le:curveslpolyh}, $c'(t)\in\gamma^a$ for all $t\geq0$ in a neighborhood of $0$. Setting $\phi(t)=c(t^2)$ for $t\geq0$ and $\phi(t)=x$ for $t\leq0$, we find a curve of class $C^1$ and $\supp(\opb{\phi}G)\subset\{t\geq0\}$. Denote by $(t;\tau)$ the homogeneous symplectic coordinates on $T^*\R$. 
 Applying~\cite{KS90}*{Cor.~6.4.4}, we get
 \eqn
 &&\left\{ \parbox{75ex}{
 $(0;\tau)\in\musupp(\opb{\phi}G)$ implies that there exists a sequence $\{(x_n;\xi_n)\}_n\subset \SSi(G)$ with 
$x_n\To[n]x$, $t_n\To[n]0$, $\langle\phi'(t_n),\xi_n\rangle\To[n]\tau$. 
 }\right.\eneqn
 Since $\phi'(t_n)\in\gamma^a$ and  $\xi_n\in\gammac$, 
we get $\tau\geq0$. Hence, $\opb{\phi}G\simeq0$ in a neighborhood of $0$. This is a contradiction. 
\end{proof}

For any sheaf $F\in\Derb_\Rc(\cor_\BBV)$, there exists a largest open subset $U$ of $\BBV$ such that  $F\vert_U$ is locally constant, namely the union of all open subsets on which $F$ is locally constant.
Moreover, $U$ is subanalytic in $\BBV$ since $U=\BBV\setminus\Sing(F)$. 
Note that $U\cap\supp(F)$ is again open in $\BBV$ and subanalytic. It is 
the  largest open subset of $\BBV$ on which  $F$ is locally constant 
with strictly positive rank. Hence it is a union of connected 
components of $U$.

\begin{corollary}\label{cor:germofF1}
Let $F\in \mdrcg[\cor_{\BBV}]$ and
let $U=\supp(F)\setminus\Sing(F)$.
Then $U$ is an open subset of\/
$\BBV$ and $F\vert_U$ is locally constant. Moreover,
$U$ is dense  in $\supp(F)$. 
In particular, one has $\supp(F)=\ol{\,\Int(\supp(F))\,}$.
\end{corollary}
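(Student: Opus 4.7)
The plan is to establish in order: (i) $U$ is open and $F\vert_U$ is locally constant; (ii) $U$ is dense in $\supp(F)$; (iii) the resulting identity $\supp(F) = \ol{\Int(\supp(F))}$.

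Step (i) is essentially a restatement of the paragraph preceding the corollary. Since $\BBV \setminus \Sing(F)$ is the largest open set on which $F$ is locally constant, the stalks of $F$ are constant on each connected component; therefore the subset where the stalk is non-zero is a union of connected components of $\BBV \setminus \Sing(F)$. This subset is exactly $U = \supp(F) \setminus \Sing(F)$, so it is open in $\BBV$, and $F\vert_U$ is locally constant of strictly positive rank.

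Step (ii) is the heart of the argument and relies on Theorem~\ref{th:germofF}. Given $x \in \supp(F)$ and an arbitrary open neighborhood $W$ of $x$, I would apply that theorem to obtain an open neighborhood $V \subset W$ of $x$ on which $F\vert_{(x+\gamma^a) \cap V}$ is constant. Since $x \in \supp(F)$, the stalk $F_x$ is non-zero, so the common value is non-zero. Hypothesis~\eqref{hyp1} ensures $\Int(\gamma^a)$ is non-empty, hence the open subset $(x + \Int(\gamma^a)) \cap V$ is non-empty and has $x$ in its closure. On this open set $F$ is locally constant with non-zero stalks, so it is contained in $U$, showing $W \cap U \neq \varnothing$ and hence $x \in \ol{U}$.

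For step (iii), $U$ is an open subset of $\supp(F)$, hence $U \subset \Int(\supp(F))$; combined with step (ii) this yields $\supp(F) = \ol{U} \subset \ol{\Int(\supp(F))}$, while the reverse inclusion follows from the closedness of $\supp(F)$. I do not expect any real obstacle here beyond correctly invoking Theorem~\ref{th:germofF}, which supplies precisely the $\gamma^a$-constancy near $x$ needed to produce an open set of positive-rank local constancy accumulating at every point of $\supp(F)$.
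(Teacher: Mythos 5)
Your overall strategy matches the paper's own proof: quote the preceding paragraph for openness and local constancy of $F\vert_U$, invoke Theorem~\ref{th:germofF} to get $\gamma^a$-constancy near a point, and use non-emptiness of $\Int(\gamma^a)$ to manufacture a nearby open set contained in $U$. Step (iii) is also fine.

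There is, however, a genuine error in Step (ii). You assert ``Since $x \in \supp(F)$, the stalk $F_x$ is non-zero.'' This is false: $\supp(F)$ is the \emph{closure} of $\{y : F_y \neq 0\}$, and for $\gamma$-sheaves the stalk can very well vanish at boundary points of the support. A concrete example: take $\BBV=\R$, $\gamma=\,]-\infty,0]$, and $F=\cor_{[0,1[}$, which lies in $\mdrcg[\cor_\BBV]$ by Corollary~\ref{cor:ssA}. Then $\supp(F)=[0,1]$ but $F_1=0$. So when you apply Theorem~\ref{th:germofF} at such an $x$, the sheaf $F$ is constant on $(x+\gamma^a)\cap V$, but the constant value is $0$, and the open set $(x+\Int(\gamma^a))\cap V$ is not contained in $U$ (indeed it may not even lie in $\supp(F)$). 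Your conclusion $W\cap U\neq\varnothing$ does not follow for that $x$.

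The fix is exactly what the paper does: show density of $U$ in the set $\{y\in\BBV : F_y\neq 0\}$, i.e., argue only at points $x$ with $F_x\neq 0$ (there the common value on $(x+\gamma^a)\cap V$ is automatically $F_x\neq 0$, so $(x+\Int(\gamma^a))\cap V\subset U$). This gives $\{y:F_y\neq 0\}\subset\ol{U}$, and taking closures yields $\supp(F)=\ol{\{y:F_y\neq 0\}}\subset\ol{U}\subset\supp(F)$, hence $\ol{U}=\supp(F)$ as required.
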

\begin{proof}
We know already that $U$ is an open subset of $\BBV$ and
$F\vert_U$ is locally constant.
It remains to prove that $U$ is dense in $\set{x\in\BBV}{F_x\not=0}$.
We may assume that $\gamma$ is polyhedral. 
Let $x\in X$ such that $F_x\not\simeq0$. 
  Applying Theorem~\ref{th:germofF}, we find  an open neighborhood $W$ of $x$ such that 
$F$ is constant on $(x+\gamma^a)\cap W$, 
hence on the open set $V\eqdot(x+\Int(\gamma^a))\cap W$. 
Then  $x$ belongs to the closure of $V$ and $V\subset U$.
\end{proof}
\begin{corollary}\label{cor:germofF2}
Let $F\in  \mdrcg[\cor_{\BBV}]$ and $G\in\mdrcga[\cor_{\BBV}]$. Then $\ext{j}(G,F)\simeq0$ for $j\neq0$.
\end{corollary}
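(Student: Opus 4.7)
The argument combines a microlocal estimate with the local structure result of Theorem~\ref{th:germofF}. First, since $G$ has microsupport in $\BBV\times\gamma^\circ$ (so $\musupp(G)^a\subset\BBV\times\gamma^{\circ a}$) and $\gamma^{\circ a}$ is closed under addition, the microsupport estimate for internal Hom (\cite{KS90}*{Prop.~5.4.14}) gives
\[
\musupp\bigl(\rhom(G,F)\bigr)\subset\musupp(F)+\musupp(G)^a\subset\BBV\times\gamma^{\circ a}.
\]
Combined with $\R$-constructibility of $\rhom(G,F)$, this puts $\rhom(G,F)\in\Derb_{\rcg}(\cor_\BBV)$, and by Corollary~\ref{cor:eqvderbg} all its cohomology sheaves lie in $\mdrcg[\cor_\BBV]$.

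Next, I would check stalkwise that $H^j\rhom(G,F)=0$ for $j\neq0$. Fix $x\in\BBV$. Theorem~\ref{th:germofF} applied to $F$ provides a neighborhood $U$ of $x$ on which $F\vert_{(x+\gamma^a)\cap U}$ is the constant sheaf with value $F_x$; applying the same theorem with $\gamma$ and $\gamma^a$ interchanged to $G$ (and shrinking $U$), $G\vert_{(x+\gamma)\cap U}$ is the constant sheaf with value $G_x$. Via the equivalence of Theorem~\ref{th:eqvderbg}, the germs of $F$ and $G$ at $x$ are recovered from these constant restrictions through the $\gamma$- and $\gamma^a$-topology structures. One then identifies the computation of $\rhom(G,F)_x$ with an internal Hom between constant sheaves supported on the opposite closed half-cones $x+\gamma$ and $x+\gamma^a$; since $(x+\gamma)\cap(x+\gamma^a)=\{x\}$ (as $\gamma$ is proper), this reduces to $\Hom_\cor(G_x,F_x)$ concentrated in degree $0$.

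Finally, having established that $\rhom(G,F)$ lives in the abelian category $\mdrcg[\cor_\BBV]$, one transfers to $\BBV_\gamma$ through the equivalence of Corollary~\ref{cor:eqvmdg}: because the $\gamma$-topology admits a basis of opens directed by inclusion (namely the sets $A+\Int(\gamma)$), the higher cohomology of the corresponding sheaf on $\BBV_\gamma$ vanishes, and one obtains $\Ext^j(G,F)=H^j R\Gamma(\BBV,\rhom(G,F))=0$ for $j\neq 0$. The main obstacle is the middle step: justifying that the local constancy provided by Theorem~\ref{th:germofF}, together with the equivalence on $\BBV_\gamma$, allows one to replace $F$ and $G$ near $x$ by their model constant extensions so that $\rhom(G,F)_x$ genuinely reduces to a $\cor$-linear Hom of stalks with no higher contributions.
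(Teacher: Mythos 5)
Your opening step is fine and coincides with the paper's: the estimate of \cite{KS90}*{Prop.~5.4.14} (applicable since $\gamma^\circ\cap\gamma^{\circ a}=\{0\}$) puts $\rhom(G,F)$ in $\Derb_{\rcg}(\cor_\BBV)$, so each $\ext{j}(G,F)$ lies in $\mdrcg[\cor_\BBV]$ by Corollary~\ref{cor:eqvderbg}. The genuine gap is the middle step, which you flag yourself: Theorem~\ref{th:germofF} only controls $F$ on $(x+\gamma^a)\cap U$ and $G$ on $(x+\gamma)\cap U$, and these restrictions do not determine the stalk $\rhom(G,F)_x$, which depends on $F$ and $G$ on a full neighbourhood of $x$. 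Worse, the identification you propose, $\rhom(G,F)_x\simeq\Hom(G_x,F_x)$ placed in degree $0$, is false. Take $\BBV=\R$, $\gamma=\R_{\leq0}$, $G=\cor_{]-1,0]}\in\mdrcga[\cor_\R]$ and $F=\cor_{[0,1[}\in\mdrcg[\cor_\R]$: applying $\rhom(\scbul,F)$ to $0\to\cor_{]0,1[}\to\cor_{]-1,1[}\to\cor_{]-1,0]}\to0$ and taking stalks at $0$, one finds that $\rhom(G,F)_0$ is the fibre of the restriction morphism $F_0\to\rsect(]0,\delta[\,;F)$ $(0<\delta<1)$, which is an isomorphism $\cor\isoto\cor$; hence $\rhom(G,F)_0\simeq0$ although $\Hom(G_0,F_0)\simeq\cor$. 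So the proposed local model already gives the wrong degree-zero part, and nothing in the proposal establishes the vanishing of the higher cohomology of the true stalk, which is exactly what is to be proved. A further slip: the corollary concerns the Ext \emph{sheaves} $\ext{j}(G,F)=H^j\rhom(G,F)$, not the global groups $\Ext{j}(G,F)$, so your final step about cohomology on $\BBV_\gamma$ is beside the point; and the claim that every sheaf on $\BBV_\gamma$ has vanishing higher cohomology because the topology has a directed basis is not justified.

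The missing idea is that no stalk computation is needed. Outside the closed nowhere dense set $\Sing(F)\cup\Sing(G)$, both $F$ and $G$ are locally constant of finite rank, so $\rhom(G,F)$ is concentrated in degree $0$ on a dense open subset of $\BBV$; hence for $j\neq0$ the sheaf $\ext{j}(G,F)$, which by your first step belongs to $\mdrcg[\cor_\BBV]$, has support with empty interior. Now invoke Corollary~\ref{cor:germofF1} (the consequence of Theorem~\ref{th:germofF} actually needed here): every $H\in\mdrcg[\cor_\BBV]$ satisfies $\supp(H)=\ol{\,\Int(\supp(H))\,}$, so a sheaf of this category with nowhere dense support vanishes. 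This is the paper's proof; your use of Theorem~\ref{th:germofF} should be routed through this support argument rather than through a stalkwise reduction.
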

\begin{proof}
Let $U$ and $V$ be the largest open subsets of $\BBV$ on which   $F$ and $G$ are respectively  locally constant. Then $W=U\cap V$ is open, dense and $\rhom(G,F)$ is concentrated in degree $0$ on $W$. 
Therefore, $\supp(\ext{j}(G,F))$ has empty interior for $j>0$. Since this sheaf belongs to 
$\mdrcg[\cor_{\BBV}]$ by~\cite{KS90}*{Prop.~5.4.14} and Corollary~\ref{cor:eqvderbg}, it must be $0$  by 
Corollary~\ref{cor:germofF1}. 
 \end{proof}

 The result of Corollary~\ref{cor:germofF2} does not hold if both $F$ and $G$ belong to $\mdrcg[\cor_{\BBV}]$. 
 \begin{remark}\label{le:extn}
 One has $\rhom(\cor_{\gamma^a},\cor_{\Int(\gamma)}\,[n])\simeq\cor_{\{0\}}$.
  Indeed, Applying~\cite{KS90}*{Prop.~3.4.6}) we get 
\eqn
\rhom(\cor_{\gamma^a},\cor_{\Int(\gamma)}\,[n])&\simeq& 
\rhom(\cor_{\gamma^a},\RD'\cor_{\gamma})[n]\\
&\simeq& 
\rhom(\cor_{\gamma^a}\tens\cor_\gamma, \cor_\BBV)[n]\\
&\simeq& 
\rhom(\cor_{\{0\}}, \cor_\BBV)[n]
\simeq\cor_{\{0\}}.
\eneqn
 \end{remark}

\begin{theorem}\label{th:locctonflat}
Assume~\eqref{hyp1b}. 
 Let $\Omega$ be a $\gamma$-flat open set and let 
$Z=(\Omega+\gamma)\cap\ol{\;\Omega+\gamma^a\;}$,  a $\gamma$-locally closed subset. Let $F\in \Derb_{\rcg}(\cor_{\BBV})$ and 
assume that  $F\vert_\Omega$ is locally constant. Then $F\vert_Z$ is locally constant. 
\end{theorem}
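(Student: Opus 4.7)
By Corollary~\ref{cor:eqvderbg} we may assume $F\in\mdrcg[\cor_{\BBV}]$. Fix $z_0\in Z$ and set $L\seteq F_{z_0}$; the aim is to produce an open neighborhood $V$ of $z_0$ and a sheaf isomorphism $L_{V\cap Z}\isoto F\vert_{V\cap Z}$. Write $Z=A\cap B$ with $A$ $\gamma$-open and $B$ $\gamma$-closed. Since $B+\gamma^a=B$, for every $y\in Z$ one has $(y+\gamma^a)\cap A\subset B\cap A=Z$, so the open set $A$ itself realises the neighborhood $W$ of Lemma~\ref{le:gammaflat2}(a) uniformly in $y$.

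Applying Theorem~\ref{th:germofF} at $z_0$ gives an open $U_0\ni z_0$ with $F\vert_{(z_0+\gamma^a)\cap U_0}$ constant of stalk $L$. Pick $v_0\in\Int(\gamma^a)$ and a bounded open $V_0\ni z_0$ with $\ol{V_0}\subset U_0\cap A$; by compactness there is $\epsilon>0$ with $\ol{V_0}+[0,\epsilon]v_0\subset U_0\cap A$. For $y\in V_0\cap Z$ and $t\in[0,\epsilon]$, the point $y+tv_0$ lies in $(y+\gamma^a)\cap A\subset Z$, and for $t>0$ it lies in $Z\cap(Z+\Int(\gamma^a))=\Omega$ by Lemma~\ref{le:gammaflat2}(f). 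In particular $\omega_0\seteq z_0+\epsilon v_0\in(z_0+\gamma^a)\cap U_0\cap\Omega$ has stalk $L$, and local constancy of $F\vert_\Omega$ supplies an open $W_0\subset\Omega\cap U_0$ on which $F$ is constant of stalk $L$.

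The core of the argument is to transport $L$ along each segment $\sigma_y=\{y+tv_0:t\in[0,\epsilon]\}$ back to $y$, canonically and uniformly in $y\in V_0\cap Z$. Theorem~\ref{th:germofF} applied at each point of the compact set $\ol{V_0}+[0,\epsilon]v_0$ supplies a neighborhood on which $F$ is constant along the $\gamma^a$-direction; $\R$-constructibility of $F$ yields a uniform positive lower bound on the sizes of these neighborhoods, so each $\sigma_y$ can be covered by finitely many of them with consecutive points lying in a common one. Composing the corresponding restriction isomorphisms of constant sheaves yields a canonical isomorphism $F_y\isoto F_{y+\epsilon v_0}$, natural in $y$. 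Shrinking $V_0$ to $V$ so that $y+\epsilon v_0\in W_0$ for all $y\in V$ identifies $F_{y+\epsilon v_0}$ with $L$ via the constant sheaf structure on $W_0$, and these canonical stalk isomorphisms assemble functorially into the desired isomorphism $L_{V\cap Z}\isoto F\vert_{V\cap Z}$.

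The main obstacle is the uniformity claim in the previous paragraph: Theorem~\ref{th:germofF} provides a neighborhood whose size a priori depends on the base point, and one needs a positive lower bound throughout the compact set $\ol{V_0}+[0,\epsilon]v_0$. The natural tool is the non-characteristic deformation lemma of~\cite{KS90}*{Chap.~V} applied to a linear form $\phi=\langle\xi,\cdot\rangle$ with $\xi\in\Int(\gamma^\circ)$: for such $\xi$ one has $d\phi=\xi\notin\gammac\supset\musupp F$, so $F$ propagates along the flow of $-\nabla\phi$, which after reparametrization moves in the direction $v_0$ and sweeps out the desired segments uniformly.
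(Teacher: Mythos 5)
Your geometric idea --- push each point of $Z$ into $\Omega$ along a fixed $v_0\in\Int(\gamma^a)$, using Theorem~\ref{th:germofF} at the endpoint in $Z$ and local constancy of $F\vert_\Omega$ along the rest of the segment --- is the same as the paper's, and for each \emph{fixed} $y\in V\cap Z$ the identification $F_y\isoto F_{y+\epsilon v_0}\isoto L$ is indeed available (no uniformity is needed for this: $F$ restricted to the compact connected segment $\sigma_y$ is locally constant, hence constant). The gap is in the last step. Having an isomorphism of stalks $F_y\simeq L$ for every $y\in V\cap Z$ does not produce an isomorphism of sheaves $L_{V\cap Z}\isoto F\vert_{V\cap Z}$: stalkwise isomorphisms assemble into a sheaf morphism only if they are induced by one, and the sentence ``these canonical stalk isomorphisms assemble functorially'' is precisely the assertion that requires proof. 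The uniformity you invoke to get naturality in $y$ is also unjustified: $\R$-constructibility does not give a positive lower bound on the size of the neighborhoods furnished by Theorem~\ref{th:germofF} over a compact set (already for $F=\cor_Z$ with $Z$ a $\gamma$-locally closed set having a re-entrant corner, these neighborhoods must shrink as the base point approaches the corner), and the appeal to the non-characteristic deformation lemma in your last paragraph only controls sections over sublevel sets of $\phi$; converting that into the required sheaf-level statement on $V\cap Z$ is exactly the missing work.

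The paper arranges the proof so that this assembly problem never arises: one fixes a convex neighborhood $U$ of $x\in Z$ with $\rsect(U;F)\isoto F_x$ and a \emph{single} point $y=x+\epsilon v\in\Omega$, and joins every $z\in W\cap Z$, where $W=U\cap\bl y+\Int(\gamma)\br$, to that one point by the segment $[z,y]\subset U$; the $\gamma$-flatness of $\Omega$ (Lemmas~\ref{lem:Ugamma} and~\ref{le:gammaflat2}) gives $]z,y]\subset\Omega$, so $F\vert_{[z,y]}$ is constant, and all identifications are restriction maps from the single space $\rsect(U;F)$. The conclusion is then that the canonical morphism from the constant sheaf with stalk $\rsect(U;F)$ to $F\vert_U$ is an isomorphism at every point of $W\cap Z$ --- a statement about an already given morphism, which therefore yields constancy of $F\vert_{W\cap Z}$ and hence local constancy of $F\vert_Z$. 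To repair your ``parallel transport'' version you would similarly have to exhibit an actual morphism of sheaves (for instance by routing all identifications through sections over one fixed convex open set, as above) and verify it is a stalkwise isomorphism, rather than construct isomorphisms stalk by stalk.
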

\begin{proof} 
Let $x\in Z$ and let $U$ be an open convex neighborhood of $x$ such that $F(U)\isoto F_x$ and such that, applying Theorem~\ref{th:germofF}, $F$ is constant on $(x+\gamma^a)\cap U$. We choose a vector $v\in\Int(\gamma^a)$
and $\epsilon>0$ such that $[x,x+\epsilon v]\subset (x+\gamma^a)\cap U\subset Z$. 
Let $y=x+\epsilon v$. Then $y\in \Omega$ and $F$ is constant on $[x,y]$. 
Set $W\eqdot U\cap(y+\Int(\gamma))$ so that $x\in W$. 
We shall show that $F$ is constant on $W\cap Z$. 

Let $z\in W\cap Z$.
Then $[z,y]\subset \Omega+\gamma$, $y-z\in\gamma$ and
$]z,y]\subset\ol{(\Omega+\gamma^a)}+\Int(\gamma^a)=\Omega+ \gamma^a$.
 The last equality follows from Lemma~\ref{lem:Ugamma}. 
Hence we have
$]z,y]\subset \Omega$ and 
$F\vert_{[z,y]}$ is constant by Theorem~\ref{th:germofF}.
Consider the commutative diagram:
\eqn
&&\xymatrix{
F(U)\ar[d]_-\sim\ar[dr]\ar[drr]\ar[drrr]\ar[drrrr]\\
F_x&F({[x,y]})\ar[r]_-\sim\ar[l]^-\sim&F_y
&F({[z,y]})\ar[l]^-\sim\ar[r]_-\sim
&F_z\,.
}\eneqn 
The horizontal arrows are isomorphism since
$F\vert_{[x,y]}$ and $F\vert_{[z,y]}$ are constant. 
It follows that the map $F(U)\to F_z $ is an isomorphism.
\end{proof}

\subsection{Piecewise linear  $\gamma$-sheaves}\label{subsection:PLgamma}

\begin{definition}\label{def:strat}
A {\em \PL-$\gamma$-stratification}  $(A,Z)$ of a closed set $S$ is a $\gamma$-barcode $(A,Z)$ such that 
$\supp(A,Z)=S$ and $Z_\alpha\cap Z_\beta=\emptyset$ for any $\alpha,\beta\in A$ with $\alpha\neq \beta$.
\end{definition}

\Lemma\label{lem:gammastr}
Let $F\in \Derb_{\rcg}(\cor_\BBV)$ and let $(A,Z)$ be a \PL-$\gamma$-stratification of $\supp(F)$.
Then $F_x\simeq0$ for any $x\notin\bigcup_{\al\in A}Z_\al$.
\enlemma
\Proof
Assuming that $F_x\not\simeq 0$, let us show that
$x\in \bigcup_{\al\in A}Z_\al$.
By Theorem~\ref{th:germofF}, 
there exists an open neighborhood $U$ of $x$ such that 
$F\vert_{(x+\gamma^a)\cap U}$ is constant. 
Then one has
$(x+\gamma^a)\cap U\subset \supp(F)\subset\bigcup_{\al\in A}\ol{Z_\al}$.
Hence we obtain
\eqn
x\in \ol{\;\bl x+\Int(\gamma^a)\br\cap U\;}
\subset\ol{\;\bl x+\Int(\gamma^a)\br\cap\bl\bigcup_{\al\in A}\ol{Z_\al}\br\;}
&=&\bigcup_{\al\in A}\ol{\;\bl x+\Int(\gamma^a)\br\cap\ol{Z_\al}\;}\\
&=&\bigcup_{\al\in A}\ol{\;\bl x+\Int(\gamma^a)\br\cap Z_\al\;}.
\eneqn
Hence there exists $\al\in A$ such that
$x\in \ol{\;\bl x+\Int(\gamma^a)\br\cap Z_\al\;}$.
Then Lemma~\ref{le:gammaflat2}~\eqref{item:f} implies that
$x\in Z_\al$.
\QED

Recall that for $F\in\Derb_{\rcg}(\cor_\BBV)$, $\Sing(F)$ denotes its singular locus.

\begin{theorem}\label{th:linearcase}
Assume~\eqref{hyp1b}. Let $F\in\Derb_{\rclg}(\cor_\BBV)$. 
Then there exists a \PL-$\gamma$-stratification $(A,Z)$ of $\supp(F)$
such that $F\vert_{Z_\al}$ is constant for each $\al\in A$. 
\end{theorem}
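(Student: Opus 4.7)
The plan is to extract a locally finite hyperplane arrangement from the $\PL$ structure of $F$, orient each hyperplane via Proposition~\ref{prop:germofF1b}, and then invoke Theorem~\ref{th:locctonflat} to propagate constancy from the open cells of the arrangement to their $\gamma$-locally closed closures.

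Starting from the family of convex polytopes witnessing the $\PL$ structure of $F$, I would take $\{H_i\}_{i\in I}$ to be the locally finite family of affine hyperplanes spanned by their codimension-one faces, keeping only those meeting $\Sing(H^j(F))$ for some $j$. Since each $H^j(F)$ belongs to $\mdrcg[\cor_\BBV]$ by Corollary~\ref{cor:eqvderbg}, Proposition~\ref{prop:germofF1b} applied at a regular point of $\Sing(H^j(F))\cap H_i$ forces the conormal direction of $H_i$ into $\gamma^\circ\cup\gamma^{\circ a}$. As $\gamma$ has non-empty interior, $\gamma^\circ\cap\gamma^{\circ a}=\{0\}$, so $H_i$ can be uniquely written as $\{\langle\xi_i,\cdot\rangle=c_i\}$ with $\xi_i\in\gamma^\circ\setminus\{0\}$. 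Set $H_i^+=\{\langle\xi_i,\cdot\rangle>c_i\}$, which is $\gamma$-open, and $H_i^-=\{\langle\xi_i,\cdot\rangle\le c_i\}$, its $\gamma$-closed complement.

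For each sign function $s\cl I\to\{+,-\}$ put $Z_s=\bigcap_i H_i^{s(i)}$ and $\Omega_s=\bigcap_{s(i)=+}H_i^+\cap\bigcap_{s(i)=-}\{\langle\xi_i,\cdot\rangle<c_i\}$. Each non-empty $Z_s$ is then a $\gamma$-locally closed convex polytope, and $\{Z_s\}_s$ partitions $\BBV$ locally finitely. The key point is that \emph{every non-empty $Z_s$ has $\Omega_s=\Int(Z_s)\ne\varnothing$}: for any $\xi\in\gamma^\circ\setminus\{0\}$ and any $v\in\Int(\gamma^a)$ one has $\langle\xi,v\rangle<0$ (else $\xi$ would vanish on a neighborhood of $-v\in\Int(\gamma)$, forcing $\xi=0$), so for $x\in Z_s$ and $\eps>0$ small, $x+\eps v$ satisfies every strict inequality defining $\Omega_s$.

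Now $\Omega_s$ is open and disjoint from $\bigcup_i H_i$, hence $F|_{\Omega_s}$ is locally constant, and even constant since $\Omega_s$ is convex and connected. Moreover $\Omega_s$ is $\gamma$-flat by Lemma~\ref{le:gammaflat1}~(a), and $Z_s=(\Omega_s+\gamma)\cap\overline{\Omega_s+\gamma^a}$ by Lemma~\ref{le:gammaflat2}~(e); Theorem~\ref{th:locctonflat} then yields that $F|_{Z_s}$ is locally constant, hence constant by convexity of $Z_s$. Taking $(A,Z)=\{Z_s : F|_{Z_s}\ne 0\}_s$ produces a locally finite family of pairwise disjoint non-empty $\gamma$-locally closed convex polytopes on which $F$ is constant. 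Combining Corollary~\ref{cor:germofF1}, which gives $\supp(F)=\overline{\Int(\supp F)}$, with the observation that $\Int(\supp F)\setminus\bigcup_i H_i$ is precisely $\bigsqcup_{F|_{\Omega_s}\ne 0}\Omega_s$ (an $\Omega_s$ on which $F$ vanishes cannot meet $\supp(F)$), one obtains $\supp(F)=\bigcup_{\alpha\in A}\overline{Z_\alpha}$, completing the $\PL$-$\gamma$-stratification.

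The main obstacle is the non-empty interior claim for each non-empty $Z_s$: it is exactly what activates Theorem~\ref{th:locctonflat}, and it relies crucially on both $\xi_i\in\gamma^\circ$ (from Proposition~\ref{prop:germofF1b}, which is in turn a consequence of the polyhedrality hypothesis) and $\Int(\gamma^a)\ne\varnothing$.
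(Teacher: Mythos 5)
Your overall strategy (extract a hyperplane arrangement, orient it inside $\gamma^{\circ}\cup\gamma^{\circ a}$ via Proposition~\ref{prop:germofF1b}, then propagate constancy from the open cells to their $\gamma$-locally closed hulls via Theorem~\ref{th:locctonflat}) is exactly the paper's, but there is a genuine gap at the orientation step. You keep every hyperplane $H_i$ that merely \emph{meets} $\Sing(H^j(F))$ and claim that Proposition~\ref{prop:germofF1b}, applied at a regular point of $\Sing(H^j(F))\cap H_i$, forces the conormal of $H_i$ into $\gamma^{\circ}\cup\gamma^{\circ a}$. The proposition only constrains the conormal to $\Sing(H^j(F))$ at its own regular points; it says nothing about the conormal to $H_i$ unless $\Sing(H^j(F))$ coincides with $H_i$ on a relatively open subset of $H_i$. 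A witnessing polytope family can perfectly well contribute a hyperplane that crosses the singular locus transversally (in a lower-dimensional set), and such a hyperplane need not have conormal in $\gamma^{\circ}\cup\gamma^{\circ a}$; then $H_i^{+}$ is not $\gamma$-open and your cells $Z_s$ are not $\gamma$-locally closed, so the output is not a \PL-$\gamma$-stratification. The paper's proof fixes exactly this: it keeps only the hyperplanes $H_a$ with $\Int_{H_a}\bl\Sing(F)\cap H_a\br\neq\varnothing$, and then uses the pure codimension~$1$ statement of Proposition~\ref{prop:germofF1b} to show that these hyperplanes still cover $\Sing(F)$ (a density argument you omit, and which becomes necessary once the selection criterion is corrected).

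A second, related gap is global: you assert that the hyperplanes spanned by the codimension-one faces of a locally finite polytope family form a locally finite family, which is false in general (faces far from a point can span hyperplanes accumulating at that point), and your whole construction — openness of $\Omega_s$, local finiteness of $\{Z_s\}$, nonemptiness of $\Int(Z_s)$ — leans on it. Moreover, even for a genuinely locally finite but infinite arrangement with conormals in $\gamma^{\circ}$, a sign-vector cell can have infinitely many facets, hence fail to be a convex polytope in the sense of Definition~\ref{def:polytope}, so condition \eqref{eq:bar2}(ii) of Definition~\ref{def:bar} is not guaranteed. The paper avoids both problems by first treating the case where $\supp(F)$ is compact (so finitely many hyperplanes suffice and all cells are honest polytopes) and then reducing the general case to it by cutting $\BBV$ into the locally finite $\gamma$-locally closed pieces $K_i=(U_m\setminus U_{m-1})\cap(S_n\setminus S_{n-1})$ and applying the compact case to $F\tens\cor_{K_i}$. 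Your argument needs either this reduction or a separate justification of these global finiteness and polytopality claims.
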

\begin{proof}
(i) Assume first that $\supp(F)$ is compact.

\spa
(a) Since $F$ is \PL, there exists a finite family of
affine hyperplanes $\{H_a\}_{a\in A}$ such that
$\Sing(F)\subset\bigcup_{a\in A}H_a$. 
On the other hand, $\Sing(F)$ has pure codimension $1$, 
thanks to Proposition~\ref{prop:germofF1b}.  
Let $B=\{a\in A;\Int_{H_a}\bl\Sing(F)\cap H_a\br\neq\varnothing\}$. Then,
$\Sing(F)\cap\bl\bigcup_{a\in B}H_a\br$
is dense in $\Sing(F)$.
Hence we obtain
$\Sing(F)\subset\bigcup_{a\in B}H_a$.
Set $\Omega=\supp(F)\setminus\bl\bigcup_{a\in B}H_a\br$.
Then $\Omega$ is an open subset of $\BBV$ and dense in $\supp(F)$ 
by Corollary~\ref{cor:germofF1}.

\spa
(b) Let $\Omega=\bigsqcup_{i\in I}\Omega_i$ be the decomposition of $\Omega$ into  connected components. 
Then each $\Omega_i$ is  an open convex polytope. 

At generic points of $H_a$ ($a\in B$), one has $\dT{}^*_{H_a}\BBV\cap\musupp(F)\neq\varnothing$  by Proposition~\ref{prop:germofF1b}.   If $H_a=\{\langle x,\xi\rangle=c\}$, one has $\pm\xi\in\gammac$ and thus  $H_a\cap(x+\Int(\gamma))=\varnothing$ for any $x\in H_a$.
Denote by $H_a^\pm$ the two open half-spaces with boundary $H_a$. These are $\gamma$-flat open sets and it follows that any connected component $\Omega_i$ of 
$\Omega$, which is a finite intersection of such half-spaces,  
is also $\gamma$-flat.

Set $Z_i=(\Omega_i+\gamma)\cap\ol{\Omega_i+\gamma^a}$. Then
each $Z_i$ is $\gamma$-locally closed and
$\supp(F)=\bigcup_{i\in I}\ol{Z_i}$.
By Lemma~\ref{lem:disjoint}, $Z_i\cap Z_j=\emptyset$ if $i\not=j$.
Hence $\{Z_i\}_{i\in I}$ is a $\gamma$-stratification of $\supp(F)$.
By Proposition~\ref{th:locctonflat},
$F\vert_{Z_i}$ is locally constant.  Since $Z_i$ is convex, 
$F\vert_{Z_i}$ is constant.

\spa
(ii) Now we consider the general case
where  $\supp(F)$ is not necessarily compact.

Taking $v\in\Int(\gamma)$,
one sets $U_n=-nv+\Int(\gamma)$ and $S_n=nv+\gamma^a$ for $n\in \Z$.
Then $\{U_n\}_{n\in\Z}$ is an increasing family of
 $\gamma$-open subsets, and
$\{S_n\}_{n\in\Z}$ is an increasing family of
 $\gamma$-closed subsets.
Moreover, one has $\BBV=\bigcup_{n\in\Z}U_n=\bigcup_{n\in\Z}S_n$
and $\bigcap_{n\in\Z}U_n=\bigcap_{n\in\Z}S_n=\emptyset$. 
Set $I=\Z\times\Z$ and, for $i=(m,n)\in I$, set
$K_i\seteq(U_m\setminus U_{m-1})\cap (S_n\setminus S_{n-1})$. 
Then $\{K_i\}_{i\in I}$ is a locally finite family of
$\gamma$-locally closed subsets
such that $\BBV=\bigsqcup_{i\in I}K_i$.

Set $F_i=F\tens\cor_{K_i}$.
Then $F_i\in\Derb_{\rclg}(\cor_\BBV)$
and $\supp(F_i)$ is compact.
Hence by Step (i) there exists a finite \PL-$\gamma$-stratification
$\{Z_\al\}_{\al\in A_i}$ of $\supp(F_i)$ such that
$F_i\vert_{Z_\al}$ is constant for each $\al\in A_i$.
Then, setting $A=\bigsqcup A_i$, we obtain a desired
\PL-$\gamma$-stratification $\{Z_\al\}_{\al\in A}$ of $\supp(F)$.
Note that $Z_\al\cap Z_{\al'}=\emptyset$ for $\al\not=\al'$
follows again from Lemma~\ref{lem:disjoint}.
Indeed, assume that $\al\in A_i$ and $\al'\in A_{i'}$.
If $i=i'$, one has obviously $Z_\al\cap Z_{\al'}=\emptyset$.
If $i\not=i'$, then
$\Int(Z_\al)\cap\Int(Z_{\al'})
\subset K_i\cap K_{i'}=\emptyset$, and 
Lemma~\ref{lem:disjoint} implies that $Z_\al\cap Z_{\al'}=\emptyset$.
\end{proof}

\begin{remark}
In the course of the proof of Theorem~\ref{th:linearcase}, we have also obtained the following result.

Let $F\in\Derb_{\rcg}(\cor_\BBV)$, and let  $\{H_a\}_{a\in A}$ be
a locally finite family of affine hyperplanes
such that $\Sing(F)\subset\bigcup_{a\in A}H_a$.
Then, one has
$\Sing(F)\subset\bigcup_{a\in B}H_a$
where $B=\set{a\in A}{\Int_{H_a}\bl\Sing(F)\cap H_a\br\neq\varnothing}$. 
Moreover, $F$ is $\PL$ by Lemma~\ref{lem:gammastr}. 
\end{remark}
As usual, for an additive category $\shc$,  we shall denote by $\RC^\rb(\shc)$ the category of bounded complexes of objects of $\shc$.
\begin{conjecture}
Let $F\in\Derb_{\rclg}(\cor_\BBV)$ and assume that $F$ has compact support. Then there exists a bounded complex 
$F^\scbul\in \RC^\rb(\mdrclg[\cor_\Vg])$ whose image in  $\Derb_{\rclg}(\cor_\BBV)$  is isomorphic to $F$ and such that each component $F^j$ 
of $F^\scbul$ is a barcode $\gamma$-sheaf (see Definition~\ref{def:barcodesheaf}) with compact support. 
\end{conjecture}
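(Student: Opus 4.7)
A standard truncation argument reduces the conjecture to the case where $F \in \mdrclg[\cor_\BBV]$ is a single sheaf (resolve each cohomology sheaf $H^j(F)$ separately and reassemble via the truncation triangles). So assume $F$ is a sheaf with compact support. By Theorem~\ref{th:linearcase}, $F$ admits a \emph{finite} $\gamma$-stratification $\{Z_\alpha\}_{\alpha\in A}$ on which $F$ restricts to a constant sheaf with finite-dimensional value $V_\alpha$. Endow $A$ with any total order refining the specialization preorder $\alpha\preceq\beta \iff Z_\alpha\subseteq\ol{Z_\beta}$, so that strata maximal for $\preceq$ (those which are $\gamma$-open in $\supp F$) come first. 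Via Theorem~\ref{th:eqvderbg} and the identity $F_x = F(x+\gamma)$, $F$ is combinatorially a representation of the finite poset $(A,\preceq)$ with values $V_\alpha$ and the specialization transition maps; the conjecture becomes a statement about resolving such finite representations by barcode representations.

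The approach is induction on $|A|$. Pick $\alpha_0$ maximal in $(A,\preceq)$; then $Z_{\alpha_0}$ is $\gamma$-open in $\supp F$. The counit of the adjunction for the locally closed inclusion $j_{\alpha_0}\cl Z_{\alpha_0}\hookrightarrow \BBV$ gives a natural morphism of sheaves
\begin{equation*}
V_{\alpha_0}\otimes \cor_{Z_{\alpha_0}}\longrightarrow F,
\end{equation*}
whose source is a barcode sheaf and which is injective on stalks by maximality of $\alpha_0$. Its cokernel $F'$ is supported on $\supp F\setminus Z_{\alpha_0}$, lies in $\mdrclg[\cor_\BBV]$ by Theorem~\ref{th:PL}(3), and admits a $\gamma$-stratification with strictly fewer strata. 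By induction, $F'\simeq G^\bullet$ for a bounded complex $G^\bullet$ of barcode sheaves. Splicing the short exact sequence $0 \to V_{\alpha_0}\otimes\cor_{Z_{\alpha_0}}\to F \to F'\to 0$ with $G^\bullet$ yields a bounded complex of barcode sheaves representing $F$, of length bounded by the longest chain in $(A,\preceq)$ (hence by $\dim \BBV +1$).

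\textbf{The main obstacle} is the splicing step. The extension class is an element of $\Ext{1}(G^0,\, V_{\alpha_0}\otimes\cor_{Z_{\alpha_0}})$ which must be realized as a genuine chain map in the additive category of barcode sheaves in order for the spliced complex to have barcode-sheaf terms throughout; this is not automatic from the restrictive $\Hom$ formula~\eqref{eq:homST1}. One remedy is a mapping-cylinder-type construction: enlarge $V_{\alpha_0}\otimes\cor_{Z_{\alpha_0}}$ to a larger auxiliary barcode sheaf that absorbs the obstruction, adding a term but preserving boundedness. A cleaner alternative would be a one-shot combinatorial bar-type construction with
\begin{equation*}
F^k \;=\; \bigoplus_{\alpha_0\prec\cdots\prec\alpha_k} V_{\alpha_0}\otimes\cor_{W_{\alpha_0,\ldots,\alpha_k}}
\end{equation*}
indexed by length-$k$ chains in $(A,\preceq)$, where each $W_{\alpha_0,\ldots,\alpha_k}$ is a canonically attached $\gamma$-locally closed convex polytope (for instance a suitable intersection of translates of $\ol{Z_{\alpha_i}+\gamma}$), with differentials the alternating sums of the specialization transition maps. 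Verifying the quasi-isomorphism with $F$ is then a local computation at stalks (using acyclicity of simplices in the associated order complex), and boundedness follows directly from the chain bound in $(A,\preceq)$.
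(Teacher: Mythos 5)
First, note that this statement is stated as a \emph{conjecture} in the paper: the authors give no proof of it, so there is nothing to compare your argument against, and a complete argument here would be genuinely new. Your text, however, is a strategy sketch rather than a proof, and the place where it stops is precisely where the real difficulty of the conjecture lies. Your induction produces a short exact sequence $0\to V_{\alpha_0}\otimes\cor_{Z_{\alpha_0}}\to F\to F'\to 0$ and, inductively, an isomorphism $F'\simeq G^\scbul$ in the derived category with $G^\scbul$ a bounded complex of barcode sheaves. To conclude you must realize the extension class in $\Ext{1}(F',V_{\alpha_0}\otimes\cor_{Z_{\alpha_0}})\simeq\Hom[{\Derb(\cor_\BBV)}](G^\scbul,(V_{\alpha_0}\otimes\cor_{Z_{\alpha_0}})[1])$ by an honest chain map out of $G^\scbul$; but a derived-category morphism is a roof $G^\scbul\leftarrow H^\scbul\to (V_{\alpha_0}\otimes\cor_{Z_{\alpha_0}})[1]$, and the quasi-isomorphic replacement $H^\scbul$ has no reason to have barcode terms. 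Barcode sheaves are not projective-like (morphisms between them are extremely rigid by \eqref{eq:homST1}, and in dimension $>1$ the higher Ext's do not vanish, in contrast to Corollary~\ref{cor:dimhom1}), so neither of your two proposed remedies is carried out: the ``mapping cylinder'' fix is not constructed, and the bar-type complex is left with undefined polytopes $W_{\alpha_0,\ldots,\alpha_k}$, unverified differentials (they must actually be sheaf morphisms between constant sheaves on $\gamma$-locally closed polytopes, which \eqref{eq:homST1} severely constrains), and an unverified quasi-isomorphism. You acknowledge this as ``the main obstacle,'' which is accurate, but it means the conjecture is not proved.

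There are also earlier gaps. The opening reduction to a single sheaf is not ``standard'' here: your induction exhibits $F$ as an iterated extension by barcode sheaves, not as admitting a resolution by them, so the usual d\'evissage for bounded complexes (which needs, e.g., every object to receive an epimorphism from an object of the subcategory) does not apply, and reassembling the truncation triangles runs into exactly the same lifting problem as the splicing step. Moreover, the claim that a $\preceq$-maximal stratum $Z_{\alpha_0}$ is $\gamma$-open in $\supp(F)$ presupposes a frontier-type condition that Definition~\ref{def:strat} does not impose (a \PL-$\gamma$-stratification only requires disjointness and $\supp(A,Z)=\bigcup_\alpha\ol{Z_\alpha}$), and the existence of the morphism $V_{\alpha_0}\otimes\cor_{Z_{\alpha_0}}\to F$ needs an argument (for a merely locally closed $Z$ there is no natural map $F_Z\to F$; you need $Z_{\alpha_0}$ to be the trace on $\supp(F)$ of a $\gamma$-open set, together with Theorem~\ref{th:linearcase} to identify $F_{Z_{\alpha_0}}$ with a constant sheaf). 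So the architecture (induction on strata, peeling off an open stratum, or a bar-type resolution indexed by chains) is a reasonable plan of attack, but as written the argument does not close, and the closing step is exactly the open content of the conjecture.
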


\begin{bibdiv}
\begin{biblist}

\bib{BL16}{article}{   
  author={Bauer, Ulrich},
  author={Lesnick, Michael},
  title={Persistent diagram as diagrams: a categorification of the stability theorem},
   eprint={arXiv:1610.10085},
  date={2016},
}

\bib{BSD15}{article}{
  author={Bubenik, Peter},
  author={de Silva, Vin},
  author={Scott, Jonathan},
  journal={Found. Comput.math.},
  volume={15},
  pages={1501-1531},
  title={Metrics for generalized persistence modules},
  eprint={arXiv:1312.3829},
  year={2015},
}

\bib{BS14}{article}{
  author={Bubenik, Peter},
  author={Scott, Jonathan}, 
  title={Categorification of persistent homology},
  journal={Discrete Comput. Geom.},
  volume={51},
  pages={600-627},
  eprint={arXiv:1205.3669},
  year={2014},
}

\bib{CCGGO09}{article}{ 
author={Chazal, Fr{\'e}d{\'e}ric},
author={Cohen-Steiner, David},
author={Glisse, M},
author={Guibas, L. J.},
author={Oudot, Steve},
title={Proximity of Persistence Modules and their Diagrams},
 booktitle ={Proc. 25th, ACM Sympos. on Comput. Geom.},
pages={237--246},
year={2009}
}

\bib{CSGO16}{book}{ 
title={The Structure and Stability of Persistence Modules},
author={Chazal, Fr{\'e}d{\'e}ric},
author={de Silva, Vin},
author={Glisse, M},
author={Oudot, Steve},
series={ Springer Briefs in Mathematics},
editor={Springer},
year={2016}
}

\bib{CEH07}{article}{  
  author={Cohen-Steiner, David},
  author={Edelsbrunner, Herbert},
  author={Harer, John}, 
  title={Stability of persistence diagrams},
  journal={Discrete Comput. Geom.},
  volume={37},
  year={2007},
  pages={103--120},
}

\bib{CB14}{article}{
 author={Crawley-Boevey, William},
  title={Decomposition of pointwise finite-dimensional persistence modules},
 journal={J. Algebra Appl.},
  volume={14},
  year={2015},
  pages={1550066, 8 pp},
  eprint={arXiv:1210.0819},
  date={2014}
}

\bib{Cu13}{article}{ 
author={Curry, Justin M.},
title={Sheaves, cosheaves and applications},
 eprint={arXiv:1303.3255v2},
year={2013}
 }

\bib{EH08}{article}{ 
  author={Edelsbrunner, Herbert},
  author={Harer, John},
  title={Persistent homology --- a survey},
  journal={Contemporary Mathematics},
  volume={453},
  year={2008},
  pages={257--282},
  editor={J. E. Goodman, J. Pach and R. Pollack},
 booktitle={Surveys on Discrete and Computational Geometry. Twenty Years Later},
}

\bib{Gri08}{article}{
  author={Ghrist, Robert},
  title={Barcodes: The persistent topology of data},
  journal={Bull. Amer. Math. Soc.},
  volume={45},
  year={2008},
  pages={61--75},
}

\bib{GRS83}{article}{
  author={Guibas, L},
  author={Ramschaw, L},
  author={Stolfi, J},
  title={A kinetic framework for computational geometry},
  journal={Proc. IEEE Symp. on Foundations of Computer Science},
 pages={74--123},
  date={1983},
}

\bib{Gu16}{article}{
  author={Guillermou, St\'ephane},
  title={The three cusps conjecture},
  eprint={arXiv:1603.07876},
  date={2016},
}

\bib{GKS12}{article} {
 author={Guillermou, St\'ephane},
  author={Kashiwara, Masaki},
 author={Schapira, Pierre},
 title={Sheaf quantization of Hamiltonian isotopies and applications to nondisplaceability problems}, 
 journal={Duke Math Journal},
 volume= {161},
 date={2012},
 pages={201--245}
}

\bib{GS14}{article}{
 author={Guillermou, St\'ephane},
 author={Schapira, Pierre},
 title={Microlocal theory of sheaves and Tamarkin's non displaceability theorem},
 eprint={arXiv:1106.1576},
 journal={LN of the UMI},
 date={2014},
 pages={43--85}
}

\bib{GM88}{book}{
author={Goresky, Mark},
author={MacPherson, Robert},
title={Stratified Morse Theory},
series={Ergebnisse Der Mathematik Und Ihrer Grenzgebiete},
volume={14},
publisher={Springer-Verlag, Berlin},
date={1988}
}

\bib{Ka75}{article}{
author={Kashiwara, Masaki},
title={On the maximally overdetermined systems of linear differential equations I},
journal={Publ. Res. Inst. Math. Sci.},
volume={10},
date={1975},
pages={563--579}
}

\bib{Ka84}  {article}{
author={Kashiwara, Masaki},
title={The Riemann-Hilbert problem for holonomic systems},
journal={Publ.\ RIMS, Kyoto Univ.},
volume={ 20},
date={1984},
pages={319--365}
}

\bib{KS90}{book}{
  author={Kashiwara, Masaki},
  author={Schapira, Pierre},
  title={Sheaves on manifolds},
  series={Grundlehren der Mathematischen Wissenschaften [Fundamental Principles of Mathematical Sciences]},
  volume={292},
  publisher={Springer-Verlag, Berlin},
  date={1990},
  pages={x+512},
}
\bib{Le15}{article}{
 author = {Lesnick, Michael},
 title = {The Theory of the Interleaving Distance on Multidimensional Persistence Modules},
 journal = {Found. Comput. Math.},
 volume = {15},
 year = {2015},
 pages = {613--650},
 publisher = {Springer-Verlag New York, Inc.},
} 
\bib{LW15}{article}{
 title={Interactive Visualization of 2-D Persistence Modules},
  author={Lesnick, Michael},
  author={Wright, Matthew},
  eprint={arXiv:1512.00180},
  year={2015}
}

\bib{Ou15}{book}{ 
author={Oudot, Steve},
title={Persistence Theory: From Quiver Representations to Data Analysis},
series={Mathematical Surveys and Monographs}, 
publisher={AMS},
volume={209},
year={2015},
pages={218}
}

\bib{Sc91}{article} {
  author={Schapira, Pierre},
  title={Operations on constructible functions},
  journal={Journal of Pure and Applied Algebra},
  volume={72},
  year={1991},
  pages={83--93}
  }
\bib{Ta08}{article} {
  author={Tamarkin, Dmitry},
  title={Microlocal conditions for non-displaceability},
  eprint={arXiv:0809.1584},
  date={2008}
}

\end{biblist}
\end{bibdiv}

\vspace*{1cm}
\noindent
\parbox[t]{21em}
{\scriptsize{
\noindent
Masaki Kashiwara\\
 Kyoto University \\
Research Institute for Mathematical Sciences\\
Kyoto, 606--8502, Japan\\
and\\
Department of Mathematical Sciences and School of Mathematics,\\
Korean Institute for Advanced Studies, \\
Seoul 130-722, Korea

\medskip\noindent
Pierre Schapira\\
Sorbonne Universit{\'e}s, UPMC Univ Paris 6\\
Institut de Math{\'e}matiques de Jussieu\\
e-mail: pierre.schapira@imj-prg.fr\\
http://webusers.imj-prg.fr/\textasciitilde pierre.schapira/
}}

\end{document}